\newtheorem{definition}{Definition}[section]
\newtheorem{proposition}{Proposition}[section]
\newtheorem{lemma}{Lemma}[section]
\newtheorem{theorem}{Theorem}[section]
\newtheorem{assumption}{Assumption}[section]
\newcommand{\Sc}[2]{\langle#1, #2\rangle}
\newcommand{\sm}{\setminus}
\newcommand{\tx}[1]{\textnormal{#1}}
\newcommand{\PP}{\ensuremath{\mathsf P}}
\newcommand{\R}{\ensuremath \mathbb{R}}
\newcommand{\n}[1]{\|#1\|}
\newcommand{\MM}{\ensuremath \mathcal{M}}
\newcommand{\LG}{L_{\Gamma}}
\newcommand{\td}[1]{\tilde{#1}}
\newcommand{\Lr}{L_r}
\newcommand{\Lf}{L_f}
\newcommand{\grad}{\tx{grad}}
\newcommand{\tP}{\Gamma}
\newcommand{\Sb}{\mathbb{S}}
\DeclareMathOperator{\dist}{dist}
\title{Retraction based Direct Search Methods for Derivative Free Riemannian Optimization}
\author{ Vyacheslav Kungurtsev\thanks{ Department of Computer Science,
		Czech Technical University, Czech Republic. Research supported by the OP VVV project CZ.02.1.01/0.0/0.0/16\_019/0000765 ``Research Center for Informatics" 
		(\tt{kunguvya@fel.cvut.cz}) 
}
	\and
	Francesco~Rinaldi\thanks{Dipartimento di Matematica ``Tullio Levi-Civita'', Universit\`a
		di Padova, Italy
		(\tt{rinaldi@math.unipd.it})}
	\and 
	Damiano~Zeffiro\thanks{Dipartimento di Matematica ``Tullio Levi-Civita'', Universit\`a
		di Padova, Italy
		(\tt{damiano.zeffiro@math.unipd.it})}
}
\begin{document}
	\maketitle
\begin{abstract}
    Direct search methods represent a robust and reliable class of algorithms for solving black-box optimization problems. 
    In this paper, we  explore the application of those strategies to Riemannian optimization, wherein minimization is to be performed with respect to variables restricted to lie on a manifold. More specifically, we consider classic and line search extrapolated variants of direct search, and, by making use of retractions, we devise tailored strategies for the minimization of both smooth and nonsmooth functions.
     As such we analyze, for the first time in the literature, a class of retraction based algorithms for minimizing nonsmooth objectives on a Riemannian manifold without having access to (sub)derivatives. Along with convergence guarantees we provide a set of numerical performance illustrations on a standard set of problems. \\
     	\textbf{Keywords:} Direct search, derivative free optimization, Riemannian manifold, retraction. \\
     	\textbf{AMS subject classifications:} 90C06, 90C30, 90C56. 
\end{abstract}

	\section{Introduction}
    Riemannian optimization, or solving minimization problems constrained on a Riemannian manifold embedded in an Euclidean space, is an important and active area of research considering the numerous problems in data science, robotics, and other settings wherein there is an important geometric structure characterizing the allowable inputs. Derivative Free Optimization (DFO), or Zeroth Order Optimization, involves algorithms that only make use of function evaluations rather than any gradient computations in their implementation. In cases of dynamics subject to significant epistemic uncertainty and the necessity of performing a simulation to compute a function evaluation, derivatives may be unavailable. This paper presents the introduction of a classic set of DFO algorithms, namely direct search, to the case of Riemannian optimization. For classic references of Riemannian optimization and DFO, see, e.g., ~\cite{absil2009optimization} and \cite{audet2017derivative,conn2009introduction,larson2019derivative}, respectively.
	
	Formally, let $\MM$ be a smooth manifold embedded in $\R^n$.
	We are interested here in the problem 
	\begin{equation} \label{eq:opt}
		\min_{x \in \MM} f(x)
	\end{equation}
	with $f$ continuous and bounded below. We consider both the case of $f(x)$ being continuously differentiable, as well as the more general nonsmooth case.
	
	Direct search methods (see, e.g.,~\cite{kolda2003optimization} and references therein) belong to the class of algorithms that are mesh based, rather than model based. This distinction presents a binary taxonomy of DFO algorithms: on the one hand we have those based on approximating gradient information using function evaluations and constructing approximate local models, while on the other hand we have those based on sampling a pre-defined grid of points for the next iteration. Thus direct search is particularly suitable for black box cases wherein it is unknown the degree to which any model would have much veracity. 
	
	To the best of our knowledge, thorough studies of DFO on Riemannian manifolds have only been carried out recently in the literature. In~\cite{li2020zeroth}, the authors focus on a model based method using a two point function approximation for the gradient. 
	The paper~\cite{yao2021riemannian} presents a specialized Polak-Ribi{\' e}ere-Polyak procedure for finding a zero of a tangent vector field on a Riemannian manifold. In \cite{dreisigmeyer2018direct}, the author focuses on a specific class of manifolds (reductive homogeneous spaces, including several matrix manifolds) where, thanks to the properties of exponential maps, a straightforward extension of mesh adaptive direct search methods (see, e.g., \cite{audet2006mesh,audet2017derivative}) and probabilistic direct search strategies \cite{gratton2015direct} is possible. Some DFO methods and nonsmooth problems on Riemannian manifolds without convergence analysis can be found in \cite{hosseini2019nonsmooth} and references therein. 
	
	Thus our paper presents the first analysis of  retraction based direct search strategies on Riemannian manifolds, and the first analysis of a DFO algorithm for minimizing nonsmooth objectives in Riemannian optimization. In particular, we first adapt, thanks to the use of retractions, a classic direct search scheme (see, e.g., \cite{conn2009introduction, kolda2003optimization}) and a linesearch based scheme (see, e.g., \cite{cristofari2021derivative,liuzzi2010sequential,lucidi2002derivative,lucidi2002global} for further details on this class of methods) 
	to deal with the minimization of a given smooth function over a manifold. Then, inspired by the ideas in \cite{fasano2014linesearch}, we extend the two proposed strategies to the nonsmooth case.  
	
	The remainder of this paper is as follows. In Section~\ref{s:def}, we present some definitions. In Section~\ref{s:smooth}, we present and prove convergence for a direct search method applicable for continuously differentiable $f$. In Section~\ref{s:nonsmooth}, we consider the case of $f$ not being continuously differentiable, and only Lipschitz continuous. We present some numerical results in Section~\ref{s:num} and conclude in Section~\ref{s:con}.

	\section{Definitions and notation}\label{s:def}
		We now introduce some notation for the formalism we use in this article. We refer the reader to, e.g., \cite{absil2009optimization,boumal2020introduction} for an overview of the relevant background. \\
	Let
	$T\MM$ be the tangent manifold and for $x \in \MM$ let $T_x\MM$ be the tangent bundle to $\MM$ in~$x$. We assume that $\MM$ is a Riemannian manifold, i.e., for $x$ in $\MM$, we have a scalar product $\Sc{\cdot}{\cdot}_x : T_x\MM \times T_x\MM \rightarrow \R$ smoothly dependent from $x$. Let $\dist(\cdot, \cdot)$ be the distance induced by the scalar product, so that for $x, y \in \MM$ we have that $\dist(x, y)$ is the length of the shortest geodesic connecting $x$ and $y$.  Furthermore, let $\nabla_{\MM}$ be the Levi-Cita connection for $\MM$, and $\Gamma: T\MM \times \MM \rightarrow T\MM$ the parallel transport with respect to $\nabla_{\MM}$, with $\Gamma_x^y(v) \in T_y\MM$ transport of the vector $v \in T_x\MM$. We define $\PP_x$ as the orthogonal projection from $\R^n$ to $T_x\MM$, and $S(x, r) \subset \R^n$ as the sphere centered at $x$ and with radius $r$.  \\
	We write $\{a_k\}$ as a shorthand for $\{a_k\}_{k \in I}$ when the index set $I$ is clear from the context. We also use the shorthand notations $T_k\MM, \PP_k, \Sc{\cdot}{\cdot}_k, \n{\cdot}_k$, $\Gamma_i^j$ for $T_{x_k}\MM, \PP_{x_k}, \Sc{\cdot}{\cdot}_{x_k}, \n{\cdot}_{x_k}$ and $\Gamma_{x_i}^{x_j}$. \\ 
	We define the distance $\dist^*$ between vectors in different tangent spaces in a standard way using parallel transport (see for instance \cite{azagra2005nonsmooth}): for $x, y \in M$, $v \in T_xM$ and $w \in T_yM$, 
	\begin{equation} \label{eq:ct}
		\dist^*(v, w) = \n{v - \Gamma_y^x w} = \n{w - \Gamma_x^y v}  \, ,
	\end{equation}
	and for a sequence $\{(y_k, v_k)\}$  in $T\MM$ we write $v_k \rightarrow v$ if $y_k \rightarrow y$ in $\MM$ and $\dist^*(v_k, v) \rightarrow 0$. \\
	As it is common in the Riemannian optimization literature (see, e.g., \cite{absil2012projection}), to define our tentative descent directions we use a retraction $R: T\MM \rightarrow \MM$. We assume $R \in C^1(T\MM, \MM)$, with
	\begin{equation} \label{eq:rbounded}
		\dist (R(x, d), x) \leq \Lr \n{d} \, ,
	\end{equation}
	(true in any compact subset of $T\MM$ given the $C^1$ regularity of $R$, without any further assumptions), and that the sufficient decrease property holds: for any $L$-Lipschitz smooth $f$,
	\begin{equation} \label{eq:taylor}
		f(R(x, d)) \leq f(x) + \Sc{\tx{\grad} f(x)}{d} + L \n{d}^2 \, .
	\end{equation}
	
	\section{Smooth optimization problems}\label{s:smooth}
	In this section, we consider solving~\eqref{eq:opt} with the objective satisfying $f \in C^1(\MM)$. Recall that we can define the Riemannian gradient as
	\begin{equation}
		\grad f(x) = \PP_x(\nabla f(x)),
	\end{equation} 
	for given $x \in \MM$.
	\subsection{Preliminaries}
	
	First, we assume that the objective function $f$ has a Lipschitz continuous gradient on the manifold.
	\begin{assumption}\label{as:lipcongrad}
	There exists $L_f>0$ such that for all $x\in\MM$
	\begin{equation} \label{eq:lipf}
		\dist^*(\grad f(x), \grad f(y)) = \n{\tP_x^y \grad f(x) - \grad f(y)} \leq \Lf \n{\grad f(x)} \, ,
	\end{equation}
	\end{assumption}
Like in the unconstrained case, the Lipschitz gradient property implies the standard descent property.  	
\begin{proposition} \label{p:std}
	Assume that $M$ is compact and $R$ is a $C^2$ retraction. If condition \eqref{eq:lipf} holds, then the sufficient decrease property \eqref{eq:taylor} holds for some constant $L > 0$.
\end{proposition}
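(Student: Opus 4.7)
The natural approach is to pull $f$ back along the retraction and apply a one-dimensional Taylor expansion on $[0,1]$ to the curve $t \mapsto f(R(x,td))$. Since $R$ is a retraction, $R(x,0)=x$ and the differential with respect to the tangent argument at zero equals the identity on $T_x\MM$, hence
\[
f(R(x,d)) - f(x) - \Sc{\grad f(x)}{d}_x \;=\; \int_0^1 \Big[\Sc{\grad f(R(x,td))}{D_2R(x,td)[d]}_{R(x,td)} - \Sc{\grad f(x)}{d}_x\Big]\, dt \, .
\]
My task is then to bound the integrand by a constant multiple of $t\,\n{d}^2$ so that integration yields the required $L\n{d}^2$ estimate.

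To do this, I would insert the parallel transport $\Gamma_x^{R(x,td)}$ and, using that it is an isometry, split the integrand into
\[
\Sc{\grad f(R(x,td)) - \Gamma_x^{R(x,td)}\grad f(x)}{\Gamma_x^{R(x,td)}d}_{R(x,td)} \;+\; \Sc{\grad f(R(x,td))}{D_2R(x,td)[d] - \Gamma_x^{R(x,td)}d}_{R(x,td)} \, .
\]
The first summand is controlled by the Lipschitz bound on $\grad f$ expressed in \eqref{eq:lipf}, combined with \eqref{eq:rbounded}, which gives an $O(t\,\n{d}^2)$ estimate. For the second summand I would exploit that $t \mapsto D_2R(x,td)[d] - \Gamma_x^{R(x,td)}d$ vanishes at $t=0$ and has a $t$-derivative of order $\n{d}^2$, by the $C^2$ regularity of $R$ and the smoothness of parallel transport; combined with the uniform bound on $\n{\grad f}$ coming from $f \in C^1(\MM)$ together with compactness of $\MM$, this too is $O(t\,\n{d}^2)$. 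Integrating both pieces yields \eqref{eq:taylor} on a bounded neighborhood of the zero section in $T\MM$.

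To extend the bound to all $d \in T_x\MM$, I would fix any radius $\delta>0$: for $\n{d}\leq \delta$ the preceding argument applies with a single constant, while for $\n{d}>\delta$ the left-hand side of \eqref{eq:taylor} is at most $2\sup_{\MM}|f| + \sup_{\MM}\n{\grad f}\cdot \n{d}$ by compactness, which grows only linearly in $\n{d}$ and is therefore absorbed by $L\n{d}^2$ after enlarging $L$ if necessary. The main technical obstacle is the uniform estimate on $D_2R(x,td)[d] - \Gamma_x^{R(x,td)}d$: this is precisely where both hypotheses (compactness of $\MM$ and the $C^2$ rather than merely $C^1$ assumption on $R$) play a role, since one needs a second-order Taylor remainder in $t$ whose constant is uniform in the base point $x$.
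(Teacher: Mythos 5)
Your argument is essentially the paper's own proof: both write $f(R(x,d))-f(x)$ as an integral of $\Sc{\grad f(R(x,td))}{\tfrac{d}{dt}R(x,td)}$, split the integrand via parallel transport, control the gradient term with the Lipschitz condition \eqref{eq:lipf} together with \eqref{eq:rbounded}, and control the retraction-velocity term using the $C^2$ regularity of $R$, smoothness of parallel transport, and compactness for uniform constants (the paper merely carries this out in a coordinate chart rather than intrinsically). Your explicit treatment of the regime $\n{d}>\delta$, absorbing the bounded remainder into $L\n{d}^2$, is a small but welcome completion that the paper's proof leaves implicit.
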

The proof can be found in the appendix. An analogous property, but under the stronger assumption that $f$ has Lipschitz gradient as a function in $\R^n$, is proved in \cite{boumal2019global}.\\
Another assumption we make in this context  is that the gradient norm is globally bounded.
	\begin{assumption}\label{as:globboundgrad}
	There exists $M_f>0$ such that,
	\begin{equation} \label{eq:mf}
		\n{\grad f(x)} \leq M_f,
	\end{equation}
	for every $x \in \MM$. \end{assumption}
	
	For each of the algorithms in this section, we further assume that, at each iteration $k$, we have a positive spanning basis $\{p_k^j\}_{j \in [1:K]}$ of the tangent space $T_{x_{k}}M$ of the iterate $x_k$ (further details on how to get a positive spanning basis can be found, e.g., in \cite{conn2009introduction}). 	More specifically, we assume that the basis stays bounded and does not become degenerate during the algorithm, that is, 
	\begin{assumption}\label{as:basis}
	There exists $B>0$ such that
	\begin{equation} \label{eq:bounded}
		\max_{j \in [1:K]} \n{p_k^j} \leq B,
	\end{equation}
	for  every $k \in \mathbb{N}$. Furthermore there is a constant $\tau > 0$ such that
	\begin{equation} \label{eq:pspanningt}
		\max_{i \in [1:K]} \Sc{r}{p_k^j} \geq \tau \n{r}, 
	\end{equation}
	for every $k \in \mathbb{N}$ and $r \in T_{x_k}M$. \\
	\end{assumption}
	\subsection{Direct search algorithm}
		We present here our Riemannian Direct Search method based on Spanning Bases (RDS-SB) for smooth objectives as Algorithm~\ref{alg:ds}.	
	\begin{algorithm}[h]
		\caption{RDS-SB}
		\label{alg:ds}
		\begin{algorithmic}
			\par\vspace*{0.1cm}
			\STATE \textbf{Input:} $x_0\in \MM$, $\gamma_1 \in (0, 1)$, $\gamma_2 \geq 1$, $\alpha_0 > 0$, $\rho > 0$
			\FOR{$k = 0, 1, ...$}
		    \STATE Compute a positive spanning basis $\{p_k^j\}_{j = 1:K}$ of $T_k\MM$
			\FOR{$j=1,..., K$}
			\STATE Let $x_k^j= R(x_k, \alpha_k p_k^j)$
			\IF{$ f(x_k^j) \le f(x_k)-\rho \alpha_k^2 $}
			\STATE $\alpha_{k + 1} = \gamma_2 \alpha_k, x_{k + 1} = x_k^j$
			\STATE Declare the step $k$ successful 
			\STATE \textbf{Break}    
			\ENDIF
			\ENDFOR
			\IF{$ f(x_k^j) > f(x_k)-\rho \alpha_k^2 $ for $j \in [1:K]$}
			\STATE $\alpha_{k+1}= \gamma_1 \alpha_k$, $x_{k + 1} = x_k$
			\STATE Declare the step $k$ unsuccessful
			\ENDIF
			\ENDFOR
		\end{algorithmic}
	\end{algorithm}
	
	This procedure resembles the standard direct search algorithm for unconstrained derivative free optimization (see, e.g.,~\cite{conn2009introduction, kolda2003optimization}) with two significant modifications. First, at every iteration a positive spanning basis is computed for the current tangent vector space $T_k\MM$. As this space is expected to change at every iteration, it is not possible to use the same standard positive spanning sets appearing in the classic algorithms. Second, the candidate point $x_k^j$ is computed by retracting the step $\alpha_k p_k^j$ from the current tangent space $T_{x_k^j}\MM$ to the manifold.

	\subsection{Convergence analysis}
	Now we show asymptotic global convergence of the method. Using a similar structure of reasoning as in standard convergence derivations for direct search, we prove that the gradient evaluated at iterates associated with unsuccessful steps must converge to zero, and extend the property to the remaining iterates, using the Lipschitz continuity of the gradient.
	
	The first lemma states a bound on the scalar product between the gradient and the descent direction  for an unsuccessful iteration.
	\begin{lemma} \label{l:deltaklem}
		If $f(R(x_k, \alpha_k p_k^j)) > f(x_k) - \gamma \alpha_k^2$, then
		\begin{equation} \label{eq:deltak}
			\alpha_k(LB^2 + \gamma) > - \Sc{\tx{\grad} f(x_k)}{p_k^j} \, .
		\end{equation}
	\end{lemma}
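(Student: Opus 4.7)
The plan is to combine the sufficient decrease property \eqref{eq:taylor} for the retraction $R$ with the hypothesis that step $k$ failed at direction $p_k^j$. Since both $\grad f(x_k)$ and $p_k^j$ lie in $T_{x_k}\MM$, the inner product on the right-hand side of \eqref{eq:deltak} is well defined, and \eqref{eq:taylor} applies directly with $x = x_k$ and $d = \alpha_k p_k^j$.

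More concretely, I would first write
\begin{equation*}
    f(R(x_k, \alpha_k p_k^j)) \leq f(x_k) + \alpha_k \Sc{\grad f(x_k)}{p_k^j}_k + L \alpha_k^2 \n{p_k^j}_k^2
\end{equation*}
from \eqref{eq:taylor}. Chaining this with the failure hypothesis $f(R(x_k, \alpha_k p_k^j)) > f(x_k) - \gamma \alpha_k^2$ and cancelling $f(x_k)$ yields
\begin{equation*}
    -\gamma \alpha_k^2 < \alpha_k \Sc{\grad f(x_k)}{p_k^j}_k + L \alpha_k^2 \n{p_k^j}_k^2.
\end{equation*}
Dividing through by $\alpha_k > 0$ and rearranging gives
\begin{equation*}
    -\Sc{\grad f(x_k)}{p_k^j}_k < \alpha_k\bigl(L \n{p_k^j}_k^2 + \gamma\bigr).
\end{equation*}

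Finally, I would invoke the uniform bound \eqref{eq:bounded} from Assumption \ref{as:basis}, namely $\n{p_k^j}_k \leq B$, to replace $\n{p_k^j}_k^2$ with $B^2$ and conclude \eqref{eq:deltak}. No step looks like a serious obstacle here: the argument is essentially a one-line Taylor-type computation, with the only subtle point being to recognize that the Taylor inequality \eqref{eq:taylor} is precisely the right tool in the Riemannian setting (valid because Proposition \ref{p:std} guarantees the constant $L$ exists under Assumption \ref{as:lipcongrad}) and that the boundedness of the spanning basis is exactly what converts $L\n{p_k^j}^2$ into the algorithm-independent constant $LB^2$.
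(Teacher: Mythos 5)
Your proof is correct and follows essentially the same route as the paper: apply the sufficient decrease property \eqref{eq:taylor} with $d = \alpha_k p_k^j$, chain it with the failure inequality, divide by $\alpha_k > 0$, and bound $\n{p_k^j}$ by $B$ via \eqref{eq:bounded}. The only (immaterial) difference is that the paper invokes the bound $\n{p_k^j} \leq B$ before rearranging rather than after.
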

	\begin{proof}
		To start with, we have
		\begin{equation} \label{eq:pspanning}
			\begin{aligned}
				&	f(x_k) - \gamma \alpha_k^2 < f(R(x, \alpha_k p_k^j)) \leq f(x_k) + \alpha_k\Sc{\tx{\grad} f(x_k)}{p_k^j} + L \alpha_k^2 \n{p_k^j}^2 \\ 
				\leq &  f(x_k) + \alpha_k \Sc{\tx{\grad} f(x_k)}{p_k^j} + L \alpha_k^2 B^2 \, ,
			\end{aligned}
		\end{equation}
		where we used \eqref{eq:taylor} in the second inequality, and \eqref{eq:bounded} in the third one.
		The above inequality can be rewritten as
		\begin{equation}
			\alpha_k\Sc{\tx{\grad} f(x_k)}{p_k^j} + \alpha_k^2 (LB^2 + \gamma) > 0.
		\end{equation}
		Given that $\alpha_k > 0$, the above is true iff
		\begin{equation} \label{eq:deltakbis}
			\alpha_k > - \frac{\Sc{\tx{\grad} f(x_k)}{p_k^j}}{(LB^2 + \gamma)} \, ,
		\end{equation}
		which rearranged gives the thesis. 
	\end{proof}

	From this we can infer a bound on the gradient with respect to the stepsize.
	\begin{lemma} \label{l:gnorm}
		If iteration $k$ is unsuccessful, then 
		\begin{equation} \label{eq:ngrad}
			\n{\grad f(x_k)} \leq \frac{\alpha_k(2LB^2 + \gamma)}{\tau} \, .
		\end{equation}
	\end{lemma}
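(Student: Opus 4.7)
The plan is to combine the previous lemma with the positive spanning basis property in Assumption~\ref{as:basis} to extract a gradient bound from the failure of the sufficient decrease test along every direction $p_k^j$.

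First, I would invoke the algorithm's unsuccessful condition: if iteration $k$ is unsuccessful, then by construction $f(R(x_k, \alpha_k p_k^j)) > f(x_k) - \rho \alpha_k^2$ for every $j \in [1:K]$. Applying Lemma~\ref{l:deltaklem} (with $\gamma = \rho$) to each such $j$, I obtain
\begin{equation*}
\alpha_k (LB^2 + \rho) > -\Sc{\grad f(x_k)}{p_k^j} \quad \text{for all } j \in [1:K].
\end{equation*}

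Next, I would apply the positive spanning basis inequality \eqref{eq:pspanningt} to the tangent vector $r = -\grad f(x_k) \in T_{x_k}\MM$, which yields an index $j^* \in [1:K]$ such that $-\Sc{\grad f(x_k)}{p_k^{j^*}} = \Sc{-\grad f(x_k)}{p_k^{j^*}} \geq \tau \n{\grad f(x_k)}$. Combining this with the bound above specialized to $j = j^*$ gives $\alpha_k (LB^2 + \rho) > \tau \n{\grad f(x_k)}$, and dividing by $\tau$ produces the claimed bound (matching the $\alpha_k(2LB^2+\gamma)/\tau$ form of the statement up to the exact constant, which I take to come from a slightly different accounting of the Taylor-type constant in \eqref{eq:taylor}).

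I do not expect any real obstacle here: the only nontrivial ingredients, namely the Lipschitz-type descent inequality along a retraction and the uniform quality of the positive spanning basis, have already been packaged into Lemma~\ref{l:deltaklem} and Assumption~\ref{as:basis}, so the proof is essentially just selecting the right direction $p_k^{j^*}$ via the positive spanning property and reading off the bound. The one place to be careful is ensuring that $-\grad f(x_k)$ really lies in $T_{x_k}\MM$ (which it does, since $\grad f$ is the Riemannian gradient) so that \eqref{eq:pspanningt} is applicable.
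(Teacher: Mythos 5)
Your proposal is correct and follows essentially the same route as the paper: apply Lemma~\ref{l:deltaklem} to every direction of the positive spanning basis at an unsuccessful iteration, then use \eqref{eq:pspanningt} with $r = -\grad f(x_k)$ to pick out the direction realizing the maximum and read off the gradient bound. Your observation about the constant is also apt --- the argument yields $\alpha_k(LB^2+\gamma)/\tau$, which is dominated by the stated $\alpha_k(2LB^2+\gamma)/\tau$, exactly as in the paper's own proof.
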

	\begin{proof}
		If iteration $k$ is unsuccessful, equation  \eqref{eq:deltak} must hold for every $j \in [1:K]$. We obtain the thesis by applying the positive spanning property \eqref{eq:pspanningt} in the RHS:
		\begin{equation}
			\alpha_k(LB^2 + \gamma) > \max_{j \in [1:K]} - \Sc{\tx{\grad} f(x_k)}{p_k^j} \geq \tau \n{\grad f(x_k)} \, .
		\end{equation} 
	\end{proof}

    Finally, we are able to show convergence of the gradient norm using the lemmas above and appropriate arguments regarding the step sizes.
	\begin{theorem} \label{th:ds}
		For the sequence $\{x_k\}$ generated by Algorithm \ref{alg:ds} we have
		\begin{equation}
			\lim_{k \rightarrow \infty} \n{\grad f(x_k)} = 0 \, .
		\end{equation}
	\end{theorem}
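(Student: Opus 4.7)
The argument splits into three parts mirroring the outline in the paper: (i) prove $\alpha_k \to 0$; (ii) deduce $\|\grad f(x_k)\| \to 0$ along the subsequence of unsuccessful iterations via Lemma~\ref{l:gnorm}; (iii) extend the convergence to the full sequence using the Lipschitz-type hypothesis \eqref{eq:lipf}.

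For (i) I would use the standard telescoping argument: a successful iteration $k$ gives $f(x_k) - f(x_{k+1}) \geq \rho \alpha_k^2$, while unsuccessful ones leave $f$ unchanged, so boundedness of $f$ from below yields $\sum_{k \in S} \alpha_k^2 < \infty$, where $S$ denotes the set of successful iteration indices. This rules out the case of only finitely many unsuccessful iterations: on an all-successful tail, $\gamma_2 \geq 1$ would force $\alpha_k$ to be non-decreasing and hence bounded below by a positive constant, contradicting square-summability. Consequently, infinitely many unsuccessful iterations occur, and tracing the updates $\alpha_{k+1} = \gamma_2 \alpha_k$ (success) and $\alpha_{k+1} = \gamma_1 \alpha_k$ (failure) between consecutive unsuccessful indices yields $\alpha_k \to 0$ along the entire sequence.

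Step (ii) is immediate: for $k$ unsuccessful, Lemma~\ref{l:gnorm} gives $\|\grad f(x_k)\| \leq \alpha_k(2L B^2 + \rho)/\tau \to 0$.

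For (iii) I would argue by contradiction. Suppose a subsequence $\{k_j\}$ satisfies $\|\grad f(x_{k_j})\| \geq \epsilon > 0$. Since $\alpha_k \to 0$, Lemma~\ref{l:gnorm} forces $k_j \in S$ for $j$ large. Let $u_j > k_j$ be the first unsuccessful index past $k_j$; its existence follows from (i). All indices in $[k_j, u_j - 1]$ are successful, hence $\alpha_{k_j + i} = \gamma_2^i \alpha_{k_j}$ and $\dist(x_{k_j + i}, x_{k_j + i + 1}) \leq L_r B \alpha_{k_j + i}$ by \eqref{eq:rbounded} and \eqref{eq:bounded}. Summing the resulting geometric progression gives
\[
\dist(x_{k_j}, x_{u_j}) \;\leq\; L_r B \alpha_{k_j} \sum_{i = 0}^{u_j - k_j - 1} \gamma_2^i \;=\; \frac{L_r B}{\gamma_2 - 1}\bigl(\alpha_{u_j} - \alpha_{k_j}\bigr) \;\leq\; \frac{L_r B}{\gamma_2 - 1}\alpha_{u_j} \;\to\; 0.
\]
Combining this with the Lipschitz-type estimate \eqref{eq:lipf} and with $\|\grad f(x_{u_j})\| \to 0$ from step (ii), I obtain $\|\grad f(x_{k_j})\| \to 0$, contradicting $\|\grad f(x_{k_j})\| \geq \epsilon$.

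The main obstacle is step (iii): the distance control above is clean under $\gamma_2 > 1$, while the limiting case $\gamma_2 = 1$ requires finer bookkeeping, leveraging the fact that the length of a successful run, when weighted by $\alpha_k^2$, is controlled by the tail of $\sum_{k \in S}\alpha_k^2$.
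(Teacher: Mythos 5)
Your proposal is correct and follows essentially the same route as the paper's proof: show $\alpha_k \to 0$, apply Lemma~\ref{l:gnorm} at unsuccessful iterates, and transfer the bound to successful iterates by summing the geometric progression of step lengths up to the next unsuccessful index, bounding $\dist(x_k, x_{u})$ via \eqref{eq:rbounded}--\eqref{eq:bounded}, and invoking \eqref{eq:lipf}; the only cosmetic differences are that you argue by contradiction where the paper gives a direct $\varepsilon$-bound, and you anchor the geometric sum at $\alpha_{u_j}$ where the paper anchors it at $\varepsilon \geq \alpha_{k+j-1}$. The $\gamma_2 = 1$ caveat you raise is fair but applies equally to the paper's own argument, whose bound $\varepsilon\,\gamma_2/(\gamma_2 - 1)$ likewise degenerates there.
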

	\begin{proof}
		To start with, clearly $\alpha_k \rightarrow 0$ since the objective is bounded below, $\{f(x_k)\}$ is non increasing with $f(x_{k + 1}) \leq f(x_k) - \gamma \alpha_k^2$ if the step $k$ is successful, and so there can be a finite number of successful steps with $\alpha_k \geq \varepsilon$ for any $\varepsilon > 0$.\\
		For a fixed $\varepsilon > 0$, let $\bar{k}$ such that $\alpha_k \leq \varepsilon$ for every $k \geq \bar{k}$.  We now show that, for every $\varepsilon > 0$ and $k \geq \bar{k}$ large enough, we have
		\begin{equation} \label{eq:gradbound}
			\n{\grad f(x_k)} \leq  \varepsilon\left(\frac{ (2LB^2 + \gamma)}{\tau} + L_fL_r B  \frac{\gamma_2}{\gamma_2 - 1}\right) \, , 
		\end{equation}
		which clearly implies the thesis given that $\varepsilon$ is arbitrary. \\
		First, \eqref{eq:gradbound} is satisfied for $k \geq \bar{k}$ if the step $k$ is unsuccessful by Lemma \ref{l:gnorm}:
		\begin{equation} \label{eq:unsucc}
			\n{\grad f(x_k)} \leq \frac{\alpha_k(2LB^2 + \gamma)}{\tau} \leq \frac{\varepsilon(2LB^2 + \gamma)}{\tau} \, ,
		\end{equation}
		using $\alpha_k \leq \varepsilon$ in the second inequality. \\
		If the step $k$ is successful, then let $j$ be the minimum positive index such that the step $k + j$ is unsuccessful. We have that $\alpha_{k + i} = \alpha_k \gamma_2^{i}$ for $i \in [0:j - 1]$, and since $\alpha_{k + j - 1} \leq \varepsilon$ by induction we get $\alpha_{k + i} \leq \varepsilon \gamma_2^{i - j + 1}$. Therefore
		\begin{equation} \label{eq:sumdelta}
			\sum_{i= 0}^{j - 1} \alpha_{k + i} \leq \sum_{i = 0}^{j - 1} \varepsilon \gamma_2^{i - j + 1} \leq \varepsilon \sum_{h= 0}^{\infty} \gamma_2^{-h} = \varepsilon \frac{\gamma_2}{\gamma_2 - 1} \, .
		\end{equation}
		Then
		\begin{equation} \label{eq:distbound}
			\begin{aligned}
				& \dist(x_k, x_{k + j}) \leq \sum_{i = 0}^{j - 1} \dist(x_{k + i}, x_{k + i + 1}) = \sum_{i = 0}^{j - 1} \dist(x_{k + i}, R(x_{k + i}, \alpha_{k + i}p_{k + i}^{j(k + i)} )) \\
				\leq & \sum_{i = 0}^{j - 1} L_r \alpha_{k + i}B \leq L_rB \varepsilon \frac{\gamma_2}{\gamma_2 - 1} \, . 	
			\end{aligned}
		\end{equation}
		where we used \eqref{eq:rbounded} together with \eqref{eq:bounded} in the second inequality, and \eqref{eq:sumdelta} in the third one. \\
		In turn,
		\begin{equation}
			\begin{aligned}
				& \n{\grad f(x_k)} \leq \dist^*(\grad f(x_k), \grad f(x_{k + j})) + \n{\grad f(x_{k + j})} \\
				\leq & L_f \dist(x_k, x_{k + j}) + \frac{ \varepsilon(2LB^2 + \gamma)}{\tau} \leq \varepsilon\left(\frac{ 2LB^2 + \gamma}{\tau} + L_fL_r B \frac{\gamma_2}{\gamma_2 - 1} \right) \, ,
			\end{aligned}
		\end{equation}
		where we used \eqref{eq:lipf} and \eqref{eq:unsucc} with $k + j$ instead of $k$ for the first and second summand respectively in the second inequality, and \eqref{eq:distbound} in the last one.
	\end{proof}


	\subsection{Incorporating an extrapolation linesearch}
    The works~\cite{lucidi2002derivative,lucidi2002global} introduced the use of an extrapolating line search that tests the objective on variable inputs farther away from the current iterate than the tentative point obtained by direct search on a given direction (i.e., an element of the positive spanning set). Such a thorough exploration of the search directions ultimately yields better performances in practice.
    We found that the same technique can be applied in the Riemannian setting to good effect.
    We present here our Riemannian Direct Search with Extrapolation method based on Spanning Bases (RDSE-SB) for smooth objectives.	The scheme is presented in detail as Algorithm \ref{alg:dse}.
    As we can easily see, the method uses a specific stepsize for each direction in the positive spanning basis, so that instead of $\alpha_k$ we have a set of stepsizes $\{\alpha_k^j\}_{j \in [1:K]}$ for every $k \in \mathbb{N}_0$. Furthermore 
    a retraction based linesearch procedure
    (see Algorithm \ref{alg:lsep}) is used to better explore a given direction in case a sufficient decrease of the objective is obtained.

	When analyzing the RDSE-SB method, we assume that  the following continuity condition holds.
	\begin{assumption}
	For every $l, m \in \mathbb{N}$, $j \in [1:K]$, there exists a constant $\LG>0$ such that
	
	\begin{equation} \label{eq:mb}
		\dist^*(p^j_l, p_m^j) \leq \LG \dist(x_l, x_m) \, .
	\end{equation}
	\end{assumption}
	We refer the reader to \cite{lucidi2002global} for a slightly weaker continuity condition in an Euclidean setting.

	\begin{algorithm}[h]
		\caption{RDSE-SB}
		\label{alg:dse}
		\begin{algorithmic}
			\par\vspace*{0.1cm}
			\STATE \textbf{Input:} $x_0\in \mathbb{R}^n$, $\{\alpha^j_0\}_{j \in [1:K]}$, $\gamma > 0, \gamma_1 \in (0, 1), \gamma_2 \geq 1$.
		\FOR{  $k=0, 1,...$}
			\STATE Compute a positive spanning basis $\{ p_k^j\}_{j \in [1:K]} $ of $T_k\mathcal{M}$
			\STATE Set $j(k)=\mod(k,n)$, $\alpha_{k}^i = \tilde{\alpha}_{k}^i$ and $\tilde{\alpha}_{k + 1}^i = \tilde{\alpha}_{k}^i$  for $i \in [1:K] \sm \{j(k)\}$. 
			\STATE Compute $\alpha_k^{j(k)}, \tilde{\alpha}_{k + 1}^{j(k)}$ with \textbf{Linesearchprocedure}($\tilde \alpha_k^{j(k)}, x_k, p_k^{j(k)}, \gamma, \gamma_1, \gamma_2$)  
		   \STATE Set $x_{k+1}= R(x_k, \alpha_k^{j(k)} p_k^{j(k)})$	
		   \ENDFOR
		\end{algorithmic}
	\end{algorithm}

	\begin{algorithm}[h]
		\caption{Linesearchprocedure($x, \alpha, d, \gamma, \gamma_1, \gamma_2$)}
		\label{alg:lsep}
		\begin{algorithmic}
			\par\vspace*{0.1cm}
			\IF{$f(R(x_k, \alpha d )) > f(x) - \gamma \alpha^2$} 
			\STATE $(0, \gamma_1 \alpha)$
			\ENDIF
			\WHILE{ $f(R(x_k, \alpha d )) < f(x) - \gamma \alpha^2 $}  
			\STATE Set $\alpha =  \gamma_2 \alpha$
			\ENDWHILE
			\STATE \textbf{Return} $(\alpha/\gamma_2, \alpha/\gamma_2)$			
		\end{algorithmic}
	\end{algorithm}
	
    We now proceed to prove the asymptotic convergence of this method.
	\begin{lemma} \label{l:splem}
		We have, at every iteration $k$, that the following inequality holds:
		\begin{equation} \label{eq:alphakgamma1}
			-\Sc{\grad f(x_k)}{p_k^{j(k)}} < \tilde{\alpha}_{k + 1}^{j(k)} \frac{\gamma_2}{\gamma_1} (2LB^2 + \gamma).
		\end{equation}
	\end{lemma}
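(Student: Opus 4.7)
The plan is to case split according to which branch of \textbf{Linesearchprocedure} produced the output $(\alpha_k^{j(k)}, \tilde{\alpha}_{k+1}^{j(k)})$, and in each case to exhibit a trial stepsize $\alpha^*$ that fails the sufficient decrease test, so that Lemma~\ref{l:deltaklem} becomes applicable. The bound \eqref{eq:alphakgamma1} will then come from expressing $\alpha^*$ in terms of $\tilde{\alpha}_{k+1}^{j(k)}$ and chaining a few straightforward inequalities.

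First I would consider the case in which the initial \textbf{if} fires, i.e.\ $f(R(x_k, \tilde\alpha_k^{j(k)} p_k^{j(k)})) > f(x_k) - \gamma (\tilde\alpha_k^{j(k)})^2$. Then the procedure returns $(0,\gamma_1\tilde\alpha_k^{j(k)})$, so $\tilde\alpha_{k+1}^{j(k)}=\gamma_1 \tilde\alpha_k^{j(k)}$, and setting $\alpha^*=\tilde\alpha_k^{j(k)}=\tilde\alpha_{k+1}^{j(k)}/\gamma_1$, Lemma~\ref{l:deltaklem} applied with $\alpha_k\leftarrow\alpha^*$ yields $\alpha^*(LB^2+\gamma)>-\Sc{\grad f(x_k)}{p_k^{j(k)}}$, and substituting $\alpha^*$ finishes this case after noting $LB^2+\gamma<\gamma_2(2LB^2+\gamma)$ since $\gamma_2\geq 1$ and $LB^2>0$.

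In the other case the \textbf{while} loop is entered; let $\alpha$ be its final value and let $m\geq 1$ be the number of expansions, so the loop exit condition gives $f(R(x_k, \alpha p_k^{j(k)}))\geq f(x_k)-\gamma\alpha^2$ and the returned pair is $(\alpha/\gamma_2,\alpha/\gamma_2)$, hence $\tilde\alpha_{k+1}^{j(k)}=\alpha/\gamma_2$, i.e.\ $\alpha=\gamma_2 \tilde\alpha_{k+1}^{j(k)}$. Applying (the inequality chain in the proof of) Lemma~\ref{l:deltaklem} with $\alpha_k\leftarrow\alpha$ gives $\alpha(LB^2+\gamma)\geq -\Sc{\grad f(x_k)}{p_k^{j(k)}}$, and substituting $\alpha=\gamma_2\tilde\alpha_{k+1}^{j(k)}$ produces a bound of the form $\gamma_2 \tilde\alpha_{k+1}^{j(k)}(LB^2+\gamma)$; since $\gamma_1\in(0,1)$ and $LB^2+\gamma<2LB^2+\gamma$, this is strictly majorized by $\tilde\alpha_{k+1}^{j(k)}\frac{\gamma_2}{\gamma_1}(2LB^2+\gamma)$, giving the desired strict inequality.

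The main subtlety will be reconciling the non-strict inequality that the while-loop exit produces (the loop exits on $\geq$, whereas Lemma~\ref{l:deltaklem} is stated with a strict $>$). The fix is built into the target bound: the factors $1/\gamma_1>1$ and the replacement of $LB^2+\gamma$ by $2LB^2+\gamma$ provide enough strict slack so that even starting from a non-strict bound on $-\Sc{\grad f(x_k)}{p_k^{j(k)}}$ we can conclude a strict bound. Consequently no additional argument beyond the case split and these elementary estimates is required.
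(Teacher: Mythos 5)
Your proof is correct and follows essentially the same route as the paper: a case split on whether \textbf{Linesearchprocedure} exits at the initial test or after the expansion loop, identification of a stepsize (equal to $\tilde{\alpha}_{k+1}^{j(k)}/\gamma_1$ or $\gamma_2\tilde{\alpha}_{k+1}^{j(k)}$, respectively) at which the sufficient decrease test fails, and an application of Lemma~\ref{l:deltaklem}. You are in fact slightly more careful than the paper, which asserts a strict failed-decrease inequality at the while-loop exit where only a non-strict one is guaranteed; your observation that the slack from $1/\gamma_1>1$ and $2LB^2+\gamma>LB^2+\gamma$ restores strictness is the right fix.
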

	\begin{proof}
		It is immediate to check that we must always have
		\begin{equation}
			f(R(x_k,  \Delta_k  p_k^{j(k)})) > f(x_k) - \gamma \Delta_k^2,
		\end{equation}
		for $\Delta_k = \frac{1}{\gamma_1} \td{\alpha}_{k + 1}^{j(k)}$ if the Linesearchprocedure terminates at the second line, and $\Delta_k = \gamma_2\td{\alpha}_{k + 1}^{j(k)} $ if the Linesearchprocedure terminates in the last line. Then in both cases 
		\begin{equation}
			-\Sc{\grad f(x_k)}{p_k^{j(k)}} < \Delta_k (2LB^2 + \gamma) \leq \tilde{\alpha}_{k + 1}^{j(k)} \frac{\gamma_2}{\gamma1} (2LB^2 + \gamma) \, ,
		\end{equation}
		where we used Lemma \ref{l:deltaklem} in the first inequality.
	\end{proof}
	\begin{theorem}
		For $\{x_k\}$ generated by Algorithm \ref{alg:dse}, we have
		\begin{equation}
			\lim_{k \rightarrow \infty} \n{\grad f(x_k)} \rightarrow 0 \, .
		\end{equation}
	\end{theorem}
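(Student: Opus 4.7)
The plan is to extend the argument of Theorem~\ref{th:ds} to the per-direction stepsize regime. First I would prove that each tentative stepsize $\tilde{\alpha}_k^j$ vanishes, then use the periodicity of $j(k)$ to translate the one-direction bound of Lemma~\ref{l:splem} across the $K$ nearby tangent spaces, and finally close the argument through the positive spanning property~\eqref{eq:pspanningt}.

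Whenever the Linesearchprocedure returns a nonzero $\alpha_k^{j(k)}$ one has $f(x_{k+1}) \leq f(x_k) - \gamma (\alpha_k^{j(k)})^2$, so boundedness below of $f$ yields $\sum_k (\alpha_k^{j(k)})^2 < \infty$ and in particular $\alpha_k^{j(k)} \to 0$. For a fixed $j$, $\tilde{\alpha}_k^j$ changes only at indices with $j(k)=j$; between two consecutive successes at that direction it is only contracted by $\gamma_1$, hence dominated by the most recent successful value $\alpha_k^{j(k)}$. Either such successes occur infinitely often (then the dominating values vanish) or they eventually cease (then the sequence contracts geometrically). In either case $\tilde{\alpha}_k^j \to 0$ for every $j \in [1:K]$, so $\alpha_k^{j(k)} \to 0$ and by \eqref{eq:rbounded}--\eqref{eq:bounded}, $\dist(x_k, x_{k+1}) \leq L_r B\, \alpha_k^{j(k)} \to 0$.

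Since $j(k)$ cycles through $[1:K]$ with period $K$, for every $k$ and every $j$ there is $k_j \in [k, k+K-1]$ with $j(k_j)=j$, and $\dist(x_k, x_{k_j}) \leq L_r B \sum_{i=0}^{K-1} \alpha_{k+i}^{j(k+i)} \to 0$. Lemma~\ref{l:splem} applied at $k_j$ gives $-\Sc{\grad f(x_{k_j})}{p_{k_j}^j}_{k_j} < \tilde{\alpha}_{k_j+1}^j \tfrac{\gamma_2}{\gamma_1}(2LB^2+\gamma)$, which also vanishes. To port this bound to $T_{x_k}\MM$, I parallel-transport the gradient and the basis vector to $T_{x_{k_j}}\MM$ and use the splitting
\begin{equation*}
\Sc{\grad f(x_k)}{p_k^j}_k - \Sc{\grad f(x_{k_j})}{p_{k_j}^j}_{k_j} = \Sc{\Gamma_k^{k_j}\grad f(x_k) - \grad f(x_{k_j})}{\Gamma_k^{k_j} p_k^j}_{k_j} + \Sc{\grad f(x_{k_j})}{\Gamma_k^{k_j} p_k^j - p_{k_j}^j}_{k_j};
\end{equation*}
Assumption~\ref{as:lipcongrad} combined with \eqref{eq:bounded} controls the first summand by $L_f B\,\dist(x_k, x_{k_j})$, while \eqref{eq:mb} and \eqref{eq:mf} bound the second by $M_f L_\Gamma\,\dist(x_k, x_{k_j})$.

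Combining the above, for every $j \in [1:K]$,
\begin{equation*}
-\Sc{\grad f(x_k)}{p_k^j}_k < \tilde{\alpha}_{k_j+1}^j \tfrac{\gamma_2}{\gamma_1}(2LB^2+\gamma) + (L_f B + M_f L_\Gamma)\dist(x_k, x_{k_j}),
\end{equation*}
whose right-hand side vanishes as $k\to\infty$. Taking the maximum over $j$ and applying \eqref{eq:pspanningt} yields $\tau\|\grad f(x_k)\| \to 0$, the desired conclusion. The main obstacle is the tangent-space translation: Lemma~\ref{l:splem} lives in $T_{x_{k_j}}\MM$ while the spanning inequality is pulled back from $T_{x_k}\MM$, and making this rigorous requires pairing the gradient Lipschitz bound \eqref{eq:lipf} with the basis continuity~\eqref{eq:mb} introduced precisely for this purpose. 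The remainder is bookkeeping, provided one handles carefully the nonmonotone behaviour of $\tilde{\alpha}_k^j$ between expansion phases.
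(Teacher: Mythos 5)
Your proposal is correct and follows essentially the same route as the paper's proof: show the tentative stepsizes vanish, apply Lemma~\ref{l:splem} at the iteration $k_j$ within the next $K$ steps where direction $j$ is polled, transport the resulting bound back to $T_{x_k}\MM$ using \eqref{eq:lipf}, \eqref{eq:mb}, \eqref{eq:mf} and the distance estimate $\dist(x_k,x_{k_j}) \leq L_rB\sum \alpha^{j(k+i)}_{k+i}$, and finish with \eqref{eq:pspanningt}. The only differences are cosmetic: you give a more explicit argument for $\tilde{\alpha}_k^j \to 0$ (the paper defers to Theorem~\ref{th:ds}) and use an exact two-term parallel-transport splitting where the paper uses a three-term bound with an extra $L_fL_\Gamma\dist^2$ cross term.
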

	\begin{proof}
		Let $\bar{\alpha}_k = \max_{j \in [1:K]} \tilde{\alpha}_{k + 1}^{j(k)}$, so that
		$	\bar{\alpha}_k \rightarrow 0$ since $\tilde{\alpha}_{k}^{j(k)} \rightarrow 0$, reasoning as in the proof of Theorem \ref{th:ds}.
		As a consequence of Lemma \ref{l:splem} we have 
		\begin{equation} \label{eq:jrel}
			-\Sc{\grad f(x_k)}{p_k^{j(k)}} < \bar{\alpha}_k c_1\, ,
		\end{equation}
		for the constant $c_1 = \frac{\gamma_2}{\gamma_1} (2LB^2 + \gamma)$ independent from $j(k)$. \\
		It remains to bound $\Sc{\grad f(x_k)}{p_k^i}$ for $i \neq j$. 
		To start with,  we have the following bound:
		\begin{equation} \label{eq:gbound}
			\begin{aligned}
				& -\Sc{\grad f(x_{k})}{p^i_k} \leq  -\Sc{\grad f(x_{k + h})}{p^i_{k + h}} + |\Sc{\grad f(x_{k + h})}{p^i_{k + h}} - \Sc{\grad f(x_{k})}{p^i_k}| \\ 
				\leq & c_1 \bar{\alpha}_{k + h} + |\Sc{\grad f(x_{k + h})}{p^i_{k + h}} - \Sc{\grad f(x_{k})}{p^i_k}|\, ,	
			\end{aligned}
		\end{equation}
		for $h \leq K$ such that $k + h = j(i)$, and where in the second inequality we used \eqref{eq:jrel} with $k + h$ instead of $k$. For the second summand appearing in the RHS of \eqref{eq:gbound}, we can write the following bound
		\begin{equation} \label{eq:dbound}
			\begin{aligned}
				& |\Sc{\grad f(x_{k + h})}{p^i_{k + h}} - \Sc{\grad f(x_k)}{p^i_k}| = |\Sc{\grad f(x_{k + h})}{p^i_{k + h}} - \Sc{\tP_k^{k + h} \grad f(x_k)}{\tP_k^{k + h}p^i_{k}}| \\ 
				\leq &  |\Sc{\grad f(x_{k + h}) - \tP_{k}^{k + h} \grad f(x_{k}) }{ p^i_{k + h}}| + |\Sc{\tP_{k}^{k + h} \grad f(x_{k}) }{p^i_{k + h} - \tP_k^{k + h} p^i_k}| \\ 
				+ & |\Sc{\grad f(x_{k + h}) - \tP_{k}^{k + h} \grad f(x_{k}) }{p^i_{k + h} - \tP_k^{k + h} p^i_k}|  \\
				\leq & L_f \dist(x_k, x_{k + h}) \n{p^i_{k + h}} + \LG \n{\grad f(x_k)} \dist(x_{k + h}, x_k) + L_f L_{\Gamma} \dist(x_k, x_{k + h})^2 \\
				\leq & (\Lf B + \LG M_f + L_fL_{\Gamma} \dist(x_{k + h}, x_k))\dist(x_{k + h}, x_k)\, ,	
			\end{aligned}
		\end{equation}
		where in the second inequality we used the Cauchy-Schwartz inequality together with the Assumptions on the Lipschitz property of the iterates \eqref{eq:lipf} and \eqref{eq:mb}, while in the third inequality we used conditions \eqref{eq:bounded} and \eqref{eq:mf}.  \\
		We can  now bound $\dist(x_k, x_{k + h})$ as follows
		\begin{equation} \label{eq:sumbound}
			\begin{aligned}
				& \dist(x_{k + h}, x_k) \leq \sum_{l = 0} ^ {h - 1} \dist(x_{k + l + 1}, x_{k + l}) \\
				= & \sum_{l= 0}^{h - 1} \dist(x_{k + l}, R(x_{k + l}, \bar{\alpha}_{k + l} p_{k + l}^{j(k + l)})) \leq \sum_{l= 0}^{h - 1} \Lr \bar{\alpha}_{k + l} \n{p_{k + l}^{j(k + l)}} \\
				\leq & B \Lr \sum_{l= 0}^{h - 1} \bar{\alpha}_{k + l}  	\leq hB\Lr \max_{l \in [0:h-1]} \bar{\alpha}_{k + l} \\ 
				\leq & KB\Lr \max_{l \in [0:K]} \bar{\alpha}_{k + l}  \, ,
			\end{aligned}
		\end{equation}
		where we used \eqref{eq:rbounded} in the second inequality, \eqref{eq:bounded} in the third one, and $h \leq K$ in the last one. \\
		Now let $\Delta_k = \max_{l \in [0:K]} \bar{\alpha}_{k + l} $, so that in particular $\Delta_k \rightarrow 0$.
		We apply \eqref{eq:sumbound} to the RHS of \eqref{eq:dbound} and obtain
		\begin{equation}
			\begin{aligned}
				& |\Sc{\grad f(x_{k + h})}{p^i_{k + h}} - \Sc{\grad f(x_k)}{p^i_k}| \leq (L_f B + \LG M_f + L_f \LG c_2 \Delta_k)c_2 \Delta_k  \rightarrow 0\, ,
			\end{aligned}
		\end{equation}
		for $k \rightarrow \infty$ and $c_2 = KB\Lr$.
		Finally, for every $i \in [1:K]$ 
		\begin{equation} \label{eq:czero}
			- \Sc{\grad f(x_{k})}{p^i_k} \leq c_1 \bar{\alpha}_{k + h} + (L_f B + \LG M_f + L_f  \LG c_2 \Delta_k)c_2 \Delta_k  \rightarrow 0 \, ,
		\end{equation}
		and the thesis follows after observing that, by \eqref{eq:pspanningt},
		\begin{equation}
			\n{\grad f(x_k)} \leq \frac{1}{\tau} \max_{i \in [1:K]} -\Sc{\grad f(x_{k})}{p^i_k} \rightarrow 0   \, ,
		\end{equation}
		where the convergence of the gradient norm to zero is a consequence of
		\eqref{eq:czero}.
	\end{proof}

	\section{Nonsmooth objectives}\label{s:nonsmooth}
	Now we proceed to present and study direct search methods in the context where $f$ is Lipschitz continuous and bounded from below, but not necessarily continuously differentiable. The algorithms we devise are built around the ideas given in~\cite{fasano2014linesearch}, where the authors consider direct search methods for nonsmooth objectives in Euclidean space.
	\subsection{Clarke stationarity for nonsmooth functions on Riemannian manifolds}
	In order to perform our analysis, we first need to define the Clarke directional derivative for a point $x \in M$. The standard approach is to write the function in coordinate charts and take the standard Clarke derivative in an Euclidean space (see, e.g.,~ \cite{hosseini2013nonsmooth} and \cite{hosseini2017riemannian}). Formally, given a chart $(\varphi, U)$ at $x \in M$ and $v \in T_xM$, we define 
	\begin{equation} \label{eq:clarke}
		f^{\circ}(x, v) = \tilde{f}(\varphi(x), d\varphi(x)v)'\, ,
	\end{equation}
	for $\tilde{f}(y) = f(\varphi^{- 1}(y))$. 
The following lemma shows the relationship between definition \eqref{eq:clarke} and a directional derivative like object defined with retractions.   
	\begin{lemma} \label{l:clarkeR}
		If $(y_k, q_k) \rightarrow (x, d)$ and $t_k \rightarrow 0$,
		\begin{equation}
			f^{\circ}(x, d) \geq \limsup_{k \rightarrow \infty} \frac{f(R(y_k, t_kq_k)) - f(y_k)}{t_k} \, .
		\end{equation}
	\end{lemma}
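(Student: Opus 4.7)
The plan is to pull the statement back to a Euclidean chart at $x$ and then invoke the standard upper semicontinuity of the classical Clarke directional derivative. Fix a chart $(\varphi, U)$ at $x$ and set $\tilde{f} = f \circ \varphi^{-1}$ and $\tilde{y}_k = \varphi(y_k)$, so that $\tilde{y}_k \to \varphi(x)$. Since $f$ is Lipschitz on $\MM$ and $\varphi^{-1}$ is smooth, $\tilde{f}$ is Lipschitz on a neighborhood of $\varphi(x)$ with some constant $L > 0$. I would rewrite the difference quotient as
\begin{equation*}
\frac{f(R(y_k, t_k q_k)) - f(y_k)}{t_k} = \frac{\tilde{f}(\tilde{y}_k + t_k w_k) - \tilde{f}(\tilde{y}_k)}{t_k}, \qquad w_k := \frac{\varphi(R(y_k, t_k q_k)) - \tilde{y}_k}{t_k},
\end{equation*}
so the lemma reduces to a Euclidean statement about $\tilde{f}$ along the perturbed directions $w_k$.

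The first main step is to prove that $w_k \to w := d\varphi(x) d$ in the target of the chart. Using $C^1$ regularity of the composition $(y, v) \mapsto \varphi(R(y, v))$ on a compact neighborhood of $(x, 0) \in T\MM$, together with the retraction identities $R(y_k, 0) = y_k$ and $dR_{(y_k, 0)}|_{T_{y_k}\MM} = \mathrm{id}$, a first-order Taylor expansion yields $w_k = d\varphi(y_k) q_k + o(1)$ where the remainder is uniform because $(y_k, t_k q_k)$ eventually lies in a compact subset of $T\MM$. To pass from $d\varphi(y_k) q_k$ to $d\varphi(x) d$ I would split
\begin{equation*}
d\varphi(y_k) q_k = d\varphi(y_k)\bigl(q_k - \tP_x^{y_k} d\bigr) + d\varphi(y_k)\bigl(\tP_x^{y_k} d\bigr),
\end{equation*}
and observe that the first summand tends to zero because $\n{q_k - \tP_x^{y_k} d} = \dist^*(q_k, d) \to 0$ by the hypothesis $q_k \to d$ and $d\varphi$ is locally bounded, while the second tends to $d\varphi(x) d$ by joint continuity of $d\varphi$ and of parallel transport, with $\tP_x^x$ the identity.

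The second main step is an upper semicontinuity argument for the Euclidean Clarke derivative, made feasible by the Lipschitz property of $\tilde{f}$. Adding and subtracting $\tilde{f}(\tilde{y}_k + t_k w)$ gives
\begin{equation*}
\frac{\tilde{f}(\tilde{y}_k + t_k w_k) - \tilde{f}(\tilde{y}_k)}{t_k} = \frac{\tilde{f}(\tilde{y}_k + t_k w) - \tilde{f}(\tilde{y}_k)}{t_k} + \frac{\tilde{f}(\tilde{y}_k + t_k w_k) - \tilde{f}(\tilde{y}_k + t_k w)}{t_k}.
\end{equation*}
The first summand has $\limsup$ bounded above by the classical Clarke derivative $\tilde{f}^{\circ}(\varphi(x), w)$ directly from its definition (since $\tilde{y}_k \to \varphi(x)$ and $t_k \downarrow 0$), while the second is dominated in absolute value by $L \n{w_k - w}$, which tends to zero by the first step. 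Since $\tilde{f}^{\circ}(\varphi(x), d\varphi(x) d) = f^{\circ}(x, d)$ by~\eqref{eq:clarke}, combining the two steps yields the inequality.

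The only nontrivial technicality I anticipate is ensuring that the $o(1)$ term in the retraction expansion is uniform across the whole sequence; this follows from continuity of the differential of $\varphi \circ R$ on a compact neighborhood of $(x, 0)$ in $T\MM$. Everything else reduces to the chain rule, continuity of parallel transport, and the definition of the Clarke derivative in Euclidean space.
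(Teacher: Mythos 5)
Your proposal is correct and follows essentially the same route as the paper: pull everything back to a chart, show that the chart-level difference quotient directions converge to $d\varphi(x)d$ via a uniform first-order expansion of $\varphi\circ R$ and continuity of parallel transport, and then conclude by the add-and-subtract Lipschitz argument plus the definition of the Euclidean Clarke derivative. The paper merely packages your second step as a separate lemma (Lemma~\ref{l:hlem}) and uses a second-order expansion of the pushed-forward retraction where your uniform first-order one already suffices; these differences are cosmetic.
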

	The proof is rather technical and we defer it to the appendix.
	\subsection{Refining subsequences}

	We now adapt the definition of refining subsequence used in the analysis of direct search methods (see, e.g., \cite{audet2002analysis, fasano2014linesearch}) to the Riemannian setting. 
	Let $(x_k, d_k)$ be a sequence in $T\MM$.
	\begin{definition}
		We say that the subsequence $\{x_{i(k)}\}$ is refining if $x_{i(k)} \rightarrow x $, and if for every $d \in T_x\MM$ with $\n{d}_x = 1$ there is a further subsequence $\{j(i(k))\}$ such that 
		\begin{equation}
			\lim_{k \rightarrow \infty} d_{j(i(k))} = d \, .
		\end{equation}
	\end{definition}
We now give a sufficient condition for a sequence to be refining.
	\begin{proposition}
		If $x_{i(k)} \rightarrow x^*$, $\bar{d}_{i(k)}$ is dense in the unit sphere, and $d_{i(k)} = \PP_{k}(\bar{d}_{i(k)})/\n{\PP_k(\bar{d}_{i(k)})}_k$ for $\PP_k(\bar{d}_{i(k)}) \neq 0$ and $d_{i(k)} = 0$ otherwise, then it holds that the subsequence $\{x_{i(k)}\}$ is refining.
	\end{proposition}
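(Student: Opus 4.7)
The plan is to show that for any fixed unit tangent vector $d \in T_{x^*}\MM$ with $\n{d}_{x^*} = 1$, some sub-subsequence of $\{d_{i(k)}\}$ converges to $d$ in the sense of \eqref{eq:ct}. Viewing $d$ as an element of $\R^n$ via the embedding, I would set $\hat{d} = d/\n{d}$, where $\n{\cdot}$ is the ambient Euclidean norm; then $\hat{d} \in S(0, 1)$. Since $\{\bar{d}_{i(k)}\}$ is dense in $S(0,1)$, I would first extract indices $\{j(i(k))\}$ so that $\bar{d}_{j(i(k))} \to \hat{d}$ in $\R^n$.

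Next, I would invoke the joint continuity of the projection map $(x, v) \mapsto \PP_x(v)$ and of the Riemannian norm $(x, v) \mapsto \n{v}_x$ on $T\MM$, both of which follow from the smoothness of $\MM$ and of its metric. Since $\hat{d} \in T_{x^*}\MM$, we have $\PP_{x^*}(\hat{d}) = \hat{d}$, and therefore $\PP_{j(i(k))}(\bar{d}_{j(i(k))}) \to \hat{d}$ in $\R^n$. As $\hat{d} \neq 0$, for all sufficiently large $k$ the denominator $\n{\PP_{j(i(k))}(\bar{d}_{j(i(k))})}_{j(i(k))}$ is nonzero and converges to $\n{\hat{d}}_{x^*}$. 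Hence $d_{j(i(k))}$ is eventually well-defined and satisfies $d_{j(i(k))} \to \hat{d}/\n{\hat{d}}_{x^*}$ in $\R^n$. Unraveling the normalizations, $\hat{d}/\n{\hat{d}}_{x^*} = d/\n{d}_{x^*} = d$ using $\n{d}_{x^*} = 1$, so $d_{j(i(k))} \to d$ in $\R^n$.

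To convert this ambient convergence into convergence on $T\MM$ in the sense of \eqref{eq:ct}, I would use the continuity of parallel transport $x \mapsto \tP_{x^*}^{x}(d)$ as a function of the base point, a standard consequence of the smoothness of the Levi-Civita connection $\nabla_\MM$. Then $\tP_{x^*}^{x_{j(i(k))}}(d) \to d$ in $\R^n$, and combined with $d_{j(i(k))} \to d$ this gives $\dist^*(d_{j(i(k))}, d) = \n{d_{j(i(k))} - \tP_{x^*}^{x_{j(i(k))}}(d)} \to 0$. Together with $x_{j(i(k))} \to x^*$, this is precisely $d_{j(i(k))} \to d$ in $T\MM$, so $\{x_{i(k)}\}$ is refining.

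The main subtlety I expect lies in bookkeeping the distinct norms at play (the ambient Euclidean norm versus the Riemannian norm at varying base points) and in invoking the correct joint-continuity statements for the projection, the metric, and the parallel transport as the base point varies. None of these is a deep obstacle; they are standard facts about smooth Riemannian submanifolds that can be cited from \cite{absil2009optimization, boumal2020introduction}.
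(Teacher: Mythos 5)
Your argument is correct and follows essentially the same route as the paper's proof: pick the subsequence by density of $\{\bar{d}_{i(k)}\}$ in $S(0,1)$, then pass to the limit using the joint continuity of $(x,v)\mapsto \PP_x(v)$ and of $\n{\cdot}_x$ together with the fact that $\PP_{x^*}$ fixes $\hat{d}\in T_{x^*}\MM$, and finally undo the two normalizations. The only difference is your explicit last step converting ambient convergence in $\R^n$ into $\dist^*$-convergence via continuity of parallel transport, which the paper leaves implicit; this is a harmless (indeed slightly more careful) addition, not a different method.
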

	\begin{proof}
		Fix $d \in T_{x^*} \MM$, with $\n{d}_{x^*} = 1$, and let $\bar{d} = d/ \n{d}$. By density, we have that $\bar{d}_{j(i(k))} \rightarrow \bar{d}$ for a proper choice of the subsequence $\{j(i(k))\}$. Then 
		\begin{equation}
			\lim_{k \rightarrow \infty } d_{j(i(k))} = \lim_{k \rightarrow \infty } \frac{\PP_k(\bar{d}_{j(i(k))})}{\n{\PP_k(\bar{d}_{j(i(k))})}_k} 
			=  \frac{\PP_{x^*}(\bar{d})}{\n{\PP_{x^*}(\bar{d})}_{x^*}} = \frac{\bar{d}}{\n{\bar{d}}_{x^*}} = d \, ,
		\end{equation}
		where in the second equality we used the continuity of $\PP_x$ and of the norm $\n{\cdot}_x$, and in the third equality we used $\PP_{x^*}(\bar{d}) = \bar{d}$ since $\bar{d} \in T_{x^*} \MM$ by construction.
	\end{proof}
	\subsection{Direct search for nonsmooth objectives}
	We present here our Riemannian Direct Search method based on Dense Directions (RDS-DD) for nonsmooth objectives.  The  scheme is presented in detail as Algorithm \ref{alg:dse3}. The algorithm performs three simple steps at an iteration $k$.  First, a given search direction is suitably projected onto the current  tangent space.  Then a tentative point is generated by retracting the step $\alpha_k d_k$ from the  tangent space  to the manifold. Such a point is then eventually accepted as the new iterate if a sufficient decrease condition of the objective function is satisfied (and the stepsize is expanded), otherwise the iterate stays the same (and the stepsize is reduced).

	\begin{algorithm}[h]
		\caption{RDS-DD}
		\label{alg:dse3}
		\begin{algorithmic}
			\par\vspace*{0.1cm}
			\STATE{\textbf{Input:} $x_0\in \mathbb{R}^n$, $\alpha_0 > 0$,  $\gamma > 0, \gamma_1 \in (0, 1), \gamma_2 \geq 1$, $\{\bar{d}_k\}$ dense in $S(0, 1)$}
			\FOR{ $k=0, 1,...$}
			\STATE{Let $d_k = \PP_{k}(\bar{d}_k)/\n{\PP_{k}(\bar{d}_k)}_k$ if $\PP_{k}(\bar{d}_k) \neq 0$, $0$ otherwise}
			\IF{$f(R(x_k, \alpha_k d_k )) \leq f(x) - \gamma \alpha_k^2$}
			\STATE{$x_{k + 1} = R(x_k, \alpha_k d_k )$, $\alpha_{k + 1} = \gamma_2 \alpha_k$}
			\ELSE
			\STATE $x_{k + 1} = x_k$, $\alpha_{k + 1} = \gamma_1 \alpha_k$
			\ENDIF
			\ENDFOR
		\end{algorithmic}
	\end{algorithm}
	
	Thanks to the theoretical tools previously introduced, we can easily prove that a suitable subsequence of unsuccessful iterations of the  RDS-DD method converges to a Clarke stationary point.
	\begin{theorem} \label{th:dsns}
		Let $\{x_k\}$ be generated by Algorithm \ref{alg:dse3}. If $\{x_{i(k)}\}$ is refining, with $ x_{i(k)} \rightarrow x^* $, and $i(k)$ is an unsuccessful iteration for every $k \in \mathbb{N} \cup \{0\}$, then $x^*$ is Clarke stationary.
	\end{theorem}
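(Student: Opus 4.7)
The plan is to combine the standard direct search observation that unsuccessful iterations provide a quantitative lower bound on the objective value along the tested retracted direction, with Lemma \ref{l:clarkeR} to transfer this lower bound into a bound on the Clarke directional derivative.

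First, I would argue that $\alpha_{i(k)} \to 0$. Reasoning as in the proof of Theorem \ref{th:ds}, since $f$ is bounded below and every successful step contributes a strict decrease $\gamma \alpha_k^2$, there can be only finitely many successful iterations with stepsize bounded away from zero. On the other hand, long runs of unsuccessful iterations drive $\alpha_k$ down by the factor $\gamma_1 < 1$. Combining these facts yields $\alpha_k \to 0$, and in particular $\alpha_{i(k)} \to 0$ along the refining subsequence.

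Next, fix an arbitrary $d \in T_{x^*}\MM$ with $\n{d}_{x^*} = 1$. By the definition of a refining subsequence, extract a further subsequence $\{l(k)\} \subseteq \{i(k)\}$ such that $d_{l(k)} \to d$ in the sense of the distance $\dist^*$ on $T\MM$. Since each $l(k)$ is an unsuccessful iteration of Algorithm \ref{alg:dse3}, the failure of the sufficient decrease test gives
\begin{equation}
f(R(x_{l(k)}, \alpha_{l(k)} d_{l(k)})) > f(x_{l(k)}) - \gamma \alpha_{l(k)}^2 \, .
\end{equation}
Rearranging and dividing by $\alpha_{l(k)} > 0$ yields
\begin{equation}
\frac{f(R(x_{l(k)}, \alpha_{l(k)} d_{l(k)})) - f(x_{l(k)})}{\alpha_{l(k)}} > -\gamma \alpha_{l(k)} \, .
\end{equation}

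Now I would apply Lemma \ref{l:clarkeR} with $y_k = x_{l(k)}$, $q_k = d_{l(k)}$ and $t_k = \alpha_{l(k)}$: the hypotheses $(y_k, q_k) \to (x^*, d)$ and $t_k \to 0$ hold by the previous steps, giving
\begin{equation}
f^\circ(x^*, d) \geq \limsup_{k \to \infty} \frac{f(R(x_{l(k)}, \alpha_{l(k)} d_{l(k)})) - f(x_{l(k)})}{\alpha_{l(k)}} \geq \limsup_{k \to \infty} (-\gamma \alpha_{l(k)}) = 0 \, .
\end{equation}
Since $d$ was an arbitrary unit tangent vector at $x^*$ and the Clarke directional derivative is positively homogeneous in its second argument, we conclude $f^\circ(x^*, d) \geq 0$ for every $d \in T_{x^*}\MM$, which is precisely the definition of Clarke stationarity.

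The only genuine subtlety is verifying that Lemma \ref{l:clarkeR} can be invoked, i.e.\ that $d_{l(k)} \to d$ in the $\dist^*$ sense, as provided by the refining property, and that $x_{l(k)} \to x^*$ together with $\alpha_{l(k)} \to 0$. Everything else is an essentially one-line manipulation of the failed sufficient-decrease inequality.
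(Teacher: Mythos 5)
Your proof is correct and follows essentially the same route as the paper's: both arguments note that $\alpha_{i(k)}\to 0$, extract a further subsequence along which $d_{l(k)}\to d$ via the refining property, divide the failed sufficient-decrease inequality by the stepsize, and invoke Lemma \ref{l:clarkeR} to conclude $f^{\circ}(x^*,d)\geq 0$ for arbitrary $d$. No gaps to report.
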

	\begin{proof}
		Clearly as in the smooth case $\alpha_k \rightarrow 0$ and in particular $\alpha_{i(k)} \rightarrow 0$. Since by assumption $i(k)$ is an unsuccessful step, we have, for every $i(k)$
		\begin{equation} \label{eq:inequns}
			f(R(x_{i(k)}, \alpha_{i(k)} d_{i(k)})) - f(x_{i(k)}) > -\gamma \alpha_{i(k)}^2 \, .
		\end{equation}
		Let $\{j(i(k)) \}$ be such that $d_{j(i(k))} \rightarrow d$, and let $y_k = x_{j(i(k))} $, $q_k = d_{j(i(k))}$, $t_k = \alpha_{j(i(k))}$. We have
		\begin{equation}
			\limsup_{k \rightarrow \infty} \frac{f(R(y_k, t_kq_k)) - f(y_k)}{t_k} \geq \limsup_{k \rightarrow \infty} -\gamma \alpha_{i(k)} = 0\, ,
		\end{equation}
		thanks to \eqref{eq:inequns}, and by applying Lemma \ref{l:clarkeR} we get
		\begin{equation}
			f^{\circ}(x^*, d) \geq \limsup_{k \rightarrow \infty} \frac{f(R(y_k, t_kq_k)) - f(y_k)}{t_k} \geq 0 \, ,
		\end{equation}
		which implies the thesis since $d$ is arbitrary.
	\end{proof}
	
	\subsection{Direct search with extrapolation for nonsmooth objectives}
		We present here our Riemannian Direct Search method with Extrapolation based on Dense Directions (RDSE-DD) for nonsmooth objectives.  The detailed scheme is given in Algorithm \ref{alg:dse2}. As we can easily see, the algorithm performs just two simple steps at an iteration $k$.  First, a given search direction is suitably projected on the current  tangent space.  Then a linesearch  is performed using Algorithm \ref{alg:lsep} to hopefully obtain a new point that guarantees a sufficient decrease. 
	\begin{algorithm}[h]
		\caption{RDSE-DD}
		\label{alg:dse2}
		\begin{algorithmic}
			\par\vspace*{0.1cm}
			\STATE \textbf{Input:} $x_0\in \mathbb{R}^n$, $\alpha_0 > 0$,  $\gamma > 0, \gamma_1 \in (0, 1), \gamma_2 \geq 1$, $\{\bar{d}_k\}$ dense in $S(0, 1)$.
			\FOR{$k=0, 1,...$}
			\STATE Let $d_k = \PP_{k}(\bar{d}_k)/\n{\PP_{k}(\bar{d}_k)}_k$ if $\PP_{k}(\bar{d}_k) \neq 0$, $0$ otherwise. 
			\STATE Compute $\alpha_k, \tilde{\alpha}_{k + 1}$ with \textbf{Linesearchprocedure}($\tilde{\alpha}_k, x_k, d_k, \gamma, \gamma_1, \gamma_2$)  
		   \STATE Set $x_{k+1}= R(x_k, \alpha_k d_k)$	
		   \ENDFOR 
		\end{algorithmic}
	\end{algorithm}
	
	Once again, by exploiting the theoretical tools previously introduced, we can straightforwardly prove that a suitable subsequence of the  RDSE-DD iterations converges to a Clarke stationary point. It is interesting to notice that, thanks to the use of the linesearch strategy, we are not restricted to considering unsuccessful iterations this time. 
	
	\begin{theorem} \label{th:dsextns}
		Let $\{x_k\}$ be generated by Algorithm \ref{alg:dse2}.
		If $\{x_{i(k)}\}$ is refining, with $ x_{i(k)} \rightarrow x^* $, then $x^*$ is Clarke stationary.
	\end{theorem}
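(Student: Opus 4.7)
The proof plan is to mirror the structure of Theorem \ref{th:dsns}, but this time exploiting the fact that even at a successful iteration the extrapolating linesearch produces a rejected trial step. The key observation is that, at every iteration $k$ of RDSE-DD, Algorithm \ref{alg:lsep} tests and rejects at least one step size: either the initial trial $\tilde\alpha_k$ (when the linesearch returns $(0,\gamma_1\tilde\alpha_k)$), or the first expanded step $\gamma_2 \tilde\alpha_{k+1}$ at which the \textbf{while} loop terminates. This gives, at every $k$, a scalar $\Delta_k\in\{\tilde\alpha_k,\gamma_2\tilde\alpha_{k+1}\}$ such that
\begin{equation*}
	f(R(x_k, \Delta_k d_k)) > f(x_k) - \gamma \Delta_k^2.
\end{equation*}
Because this holds at every $k$ (not just unsuccessful ones), the argument will apply to the given refining subsequence directly, without having to pass first to an unsuccessful sub-subsequence.

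Next I would show that $\tilde\alpha_k\to 0$, which makes $\Delta_k\to 0$ since $\Delta_k\le \max(\tilde\alpha_k,\gamma_2\tilde\alpha_{k+1})$. If there are infinitely many successful iterations $\{k_s\}$, then $f(x_{k_s+1})\le f(x_{k_s})-\gamma \alpha_{k_s}^2$, and since $f$ is bounded below we obtain $\sum_s \alpha_{k_s}^2<\infty$ and hence $\tilde\alpha_{k_s+1}=\alpha_{k_s}\to 0$. Between consecutive successful iterations $\tilde\alpha$ only shrinks (by factor $\gamma_1<1$ per unsuccessful step), so $\tilde\alpha_k\to 0$. If instead successful iterations are finite, then eventually every step is unsuccessful and $\tilde\alpha_k$ decays geometrically, so again $\tilde\alpha_k\to 0$.

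Finally, I would conclude with a Clarke-derivative argument identical in spirit to the proof of Theorem \ref{th:dsns}. Fix an arbitrary $d\in T_{x^*}\MM$ with $\n{d}_{x^*}=1$. By the refining property of $\{x_{i(k)}\}$, choose a sub-subsequence $\{j(i(k))\}$ with $d_{j(i(k))}\to d$, and set $y_k=x_{j(i(k))}$, $q_k=d_{j(i(k))}$, $t_k=\Delta_{j(i(k))}$, so that $(y_k,q_k)\to(x^*,d)$ and $t_k\to 0$. The failed-step inequality yields
\begin{equation*}
	\frac{f(R(y_k,t_k q_k))-f(y_k)}{t_k}>-\gamma t_k\to 0,
\end{equation*}
and Lemma \ref{l:clarkeR} then gives $f^\circ(x^*,d)\ge 0$. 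Since $d$ is arbitrary, $x^*$ is Clarke stationary.

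The main obstacle I expect is the bookkeeping for $\tilde\alpha_k\to 0$: the possibility of expansion at successful steps means $\tilde\alpha_{k+1}$ can exceed $\tilde\alpha_k$, so one cannot argue by monotonicity alone. The case split on finitely vs.\ infinitely many successful iterations, combined with the fact that $\tilde\alpha$ shrinks between successes, is what makes this step go through; everything else is essentially an inherited argument from Theorem \ref{th:dsns} together with the uniform existence of a rejected step $\Delta_k$ provided by the linesearch.
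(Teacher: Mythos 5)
Your proposal is correct and follows essentially the same route as the paper: identify at every iteration a rejected trial stepsize produced by the linesearch (the paper calls it $\beta_k$, you call it $\Delta_k$), show it tends to zero, and then run the refining-subsequence argument of Theorem \ref{th:dsns} through Lemma \ref{l:clarkeR}. If anything, your bookkeeping is more careful than the paper's: your identification $\Delta_k \in \{\tilde{\alpha}_k, \gamma_2\tilde{\alpha}_{k+1}\}$ matches the return values of Algorithm \ref{alg:lsep} exactly, and you spell out the $\tilde{\alpha}_k \rightarrow 0$ argument that the paper dispatches with ``clearly.''
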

	\begin{proof}
		Let $\beta_k = \td{\alpha}_{k}/\gamma_2$ if the linesearch procedure exits before the loop, and $\beta_k = \gamma_1 \td{\alpha}_{k + 1}$ otherwise. Clearly $\beta_k \rightarrow 0$, and by definition of the linesearch procedure, for every $k$
		\begin{equation} \label{eq:inequns2}
			f(R(x_k, \beta_k d_k)) - f(x_k) > -\gamma \beta_k^2 \, .
		\end{equation}
		The rest of the proof is analogous to that of Theorem \ref{th:dsns}.
	\end{proof}
	
	\section{Numerical results}\label{s:num}
	We now report the results of some numerical experiments of the algorithms described in this paper on a set of simple but illustrative example problems. The comparison among the algorithms is carried out by using data and performance profiles \cite{more2009benchmarking}. Specifically, let $S$ be a set of algorithms and $P$ a set of problems. For each $s\in S$ and $p \in P$, let $t_{p,s}$ be the number of function evaluations required by algorithm $s$ on problem $p$ to satisfy the condition 
\begin{equation}\label{eq:stop}
f(x_k) \leq f_L + \tau(f(x_0) - f_L)\, , 
\end{equation}
where $0< \tau < 1$ and $f_L$ is the best objective function value achieved by any solver on problem $p$. Then, performance and data profiles of solver $s$ are the following functions
\begin{eqnarray*}
\rho_s(\alpha) & = & \frac{1}{|P|}\left|\left\{p\in P: \frac{t_{p,s}}{\min\{t_{p,s'}:s'\in S\}}\leq\alpha\right\}\right|,\\
d_s(\kappa) & = & \frac{1}{|P|}\left|\left\{p\in P: t_{p,s}\leq\kappa(n_p+1)\right\}\right|\, ,
\end{eqnarray*}
where $n_p$ is the dimension of problem $p$. \\
We used a budget of $100(n_p+1)$ function evaluations in all cases and two different precisions for the condition \eqref{eq:stop}, that is $\tau\in \{10^{-1},10^{-3}\}$. We consider randomly generated instances of well-known  optimization problems over manifolds from \cite{absil2009optimization,boumal2020introduction,hosseini2019nonsmooth}.    
A brief description of those problems as well as the details of our implementation can be found in the appendix (see Sections~\ref{s:Id}, \ref{sapp:numtests} and \ref{s:nsp}). The size of the ambient space for the instances  varies from 2 to 200. We would finally like to highlight that, in Section~\ref{s:anr}, we report further detailed numerical results, splitting the problems by ambient space dimension: between 2 and 15 for small instances, between 16 and 50 for medium instances, and between 51 and 200 for large instances. \\

\subsection{Smooth problems}
In Figure \ref{fig:overall}, we include the results related to 8 smooth instances of problem \eqref{eq:opt} from \cite{absil2009optimization,boumal2020introduction}, each with 15 different problem dimensions (from 2 to 200), for a total number of 60 tested instances.  We compared our methods, that is RDS-SB and RDSE-SB, with the zeroth order gradient descent (ZO-RGD, \cite[Algorithm 1]{li2020zeroth}). \\	 
The results clearly show that RDSE-SB performs better than RDS-SB and ZO-RGD both in efficiency and reliability for both levels of precision.	By taking a look at the detailed results in Section \ref{s:anr}, we can also see how the gap between RDSE-SB and the other two algorithms gets larger as the problem dimension grows.
	\begin{figure}[h]
		\centering
		\begin{subfigure}[b]{0.21\textwidth}
			\includegraphics[width=\linewidth]{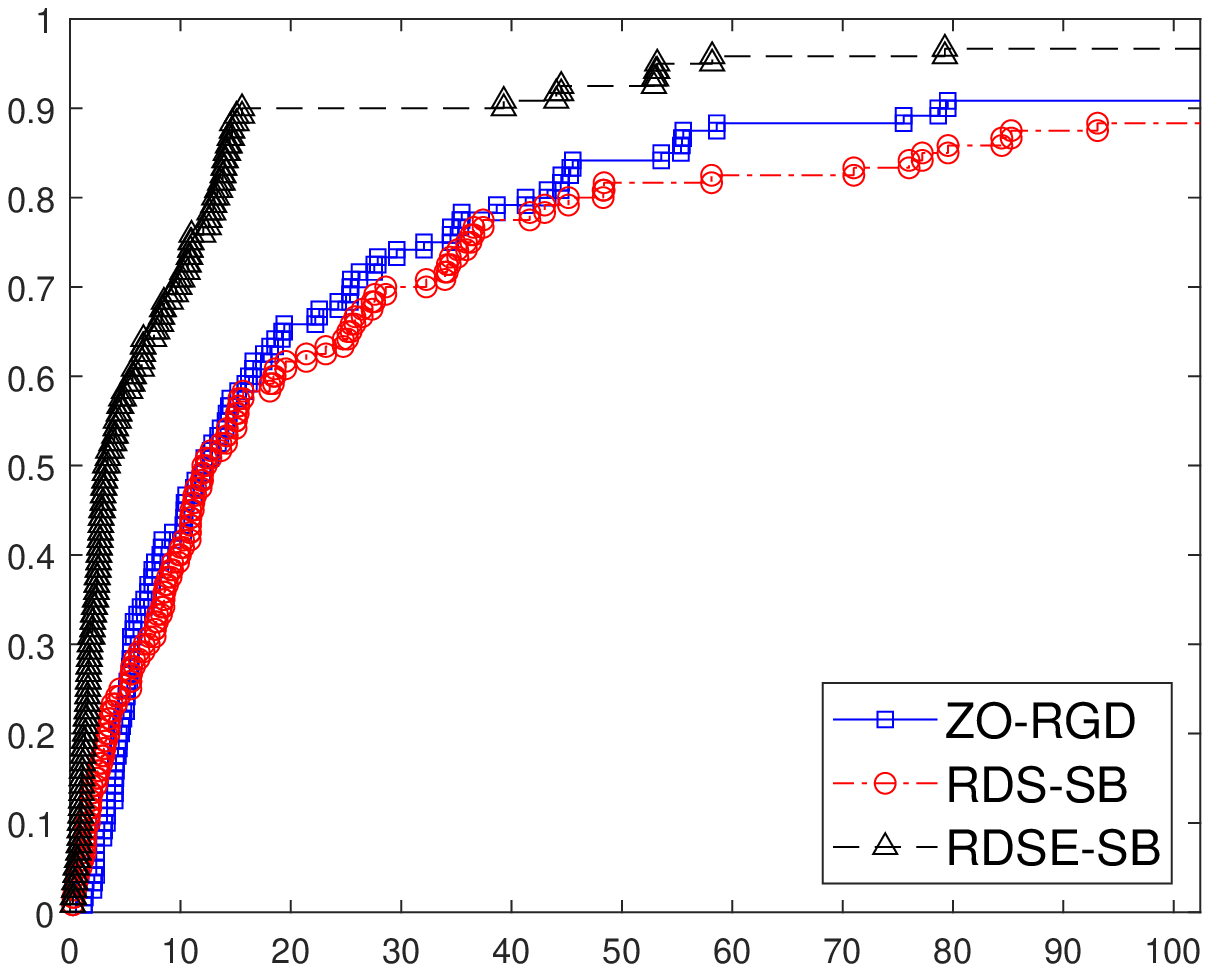}
			\caption{Data p., $\tau=10^{-1}$}
		\end{subfigure}	
		\begin{subfigure}[b]{0.21\textwidth}
			\includegraphics[width=\linewidth]{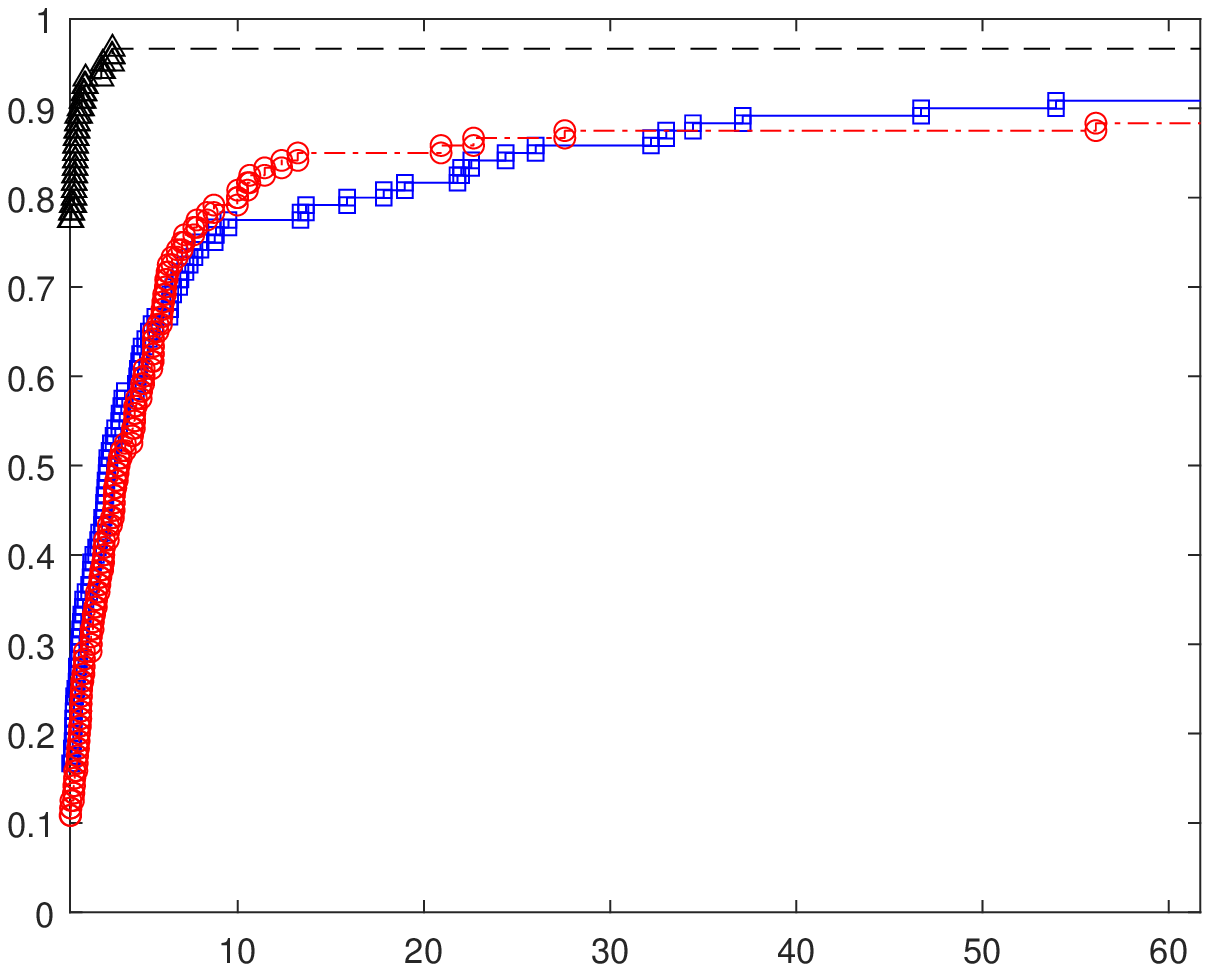}
			\caption{Perf. p., $\tau=10^{-1}$}
		\end{subfigure}
		\begin{subfigure}[b]{0.21\textwidth}
			\includegraphics[width=\linewidth]{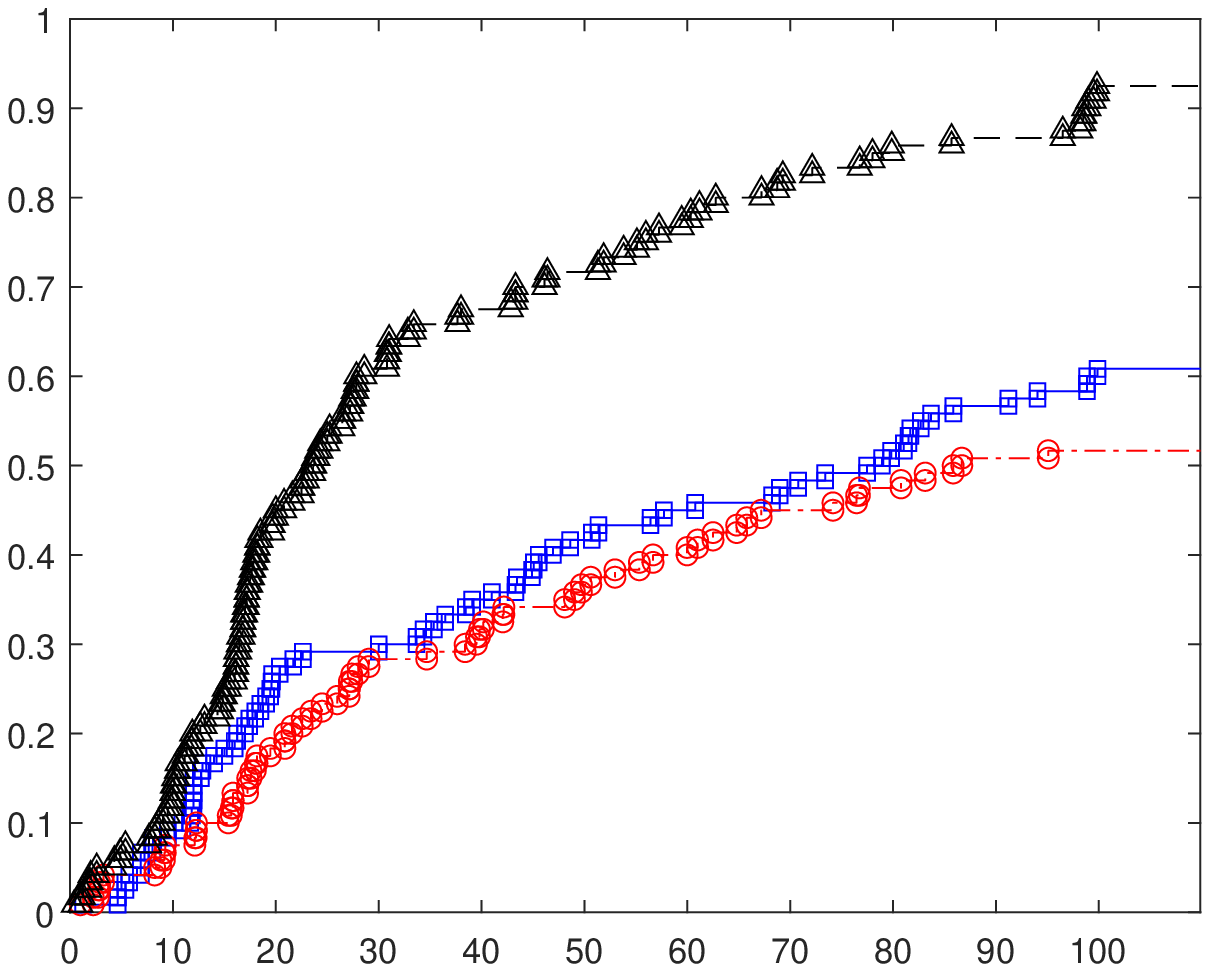}
			\caption{Data p., $\tau=10^{-3}$}
		\end{subfigure}
		\begin{subfigure}[b]{0.21\textwidth}
						\includegraphics[width=\linewidth]{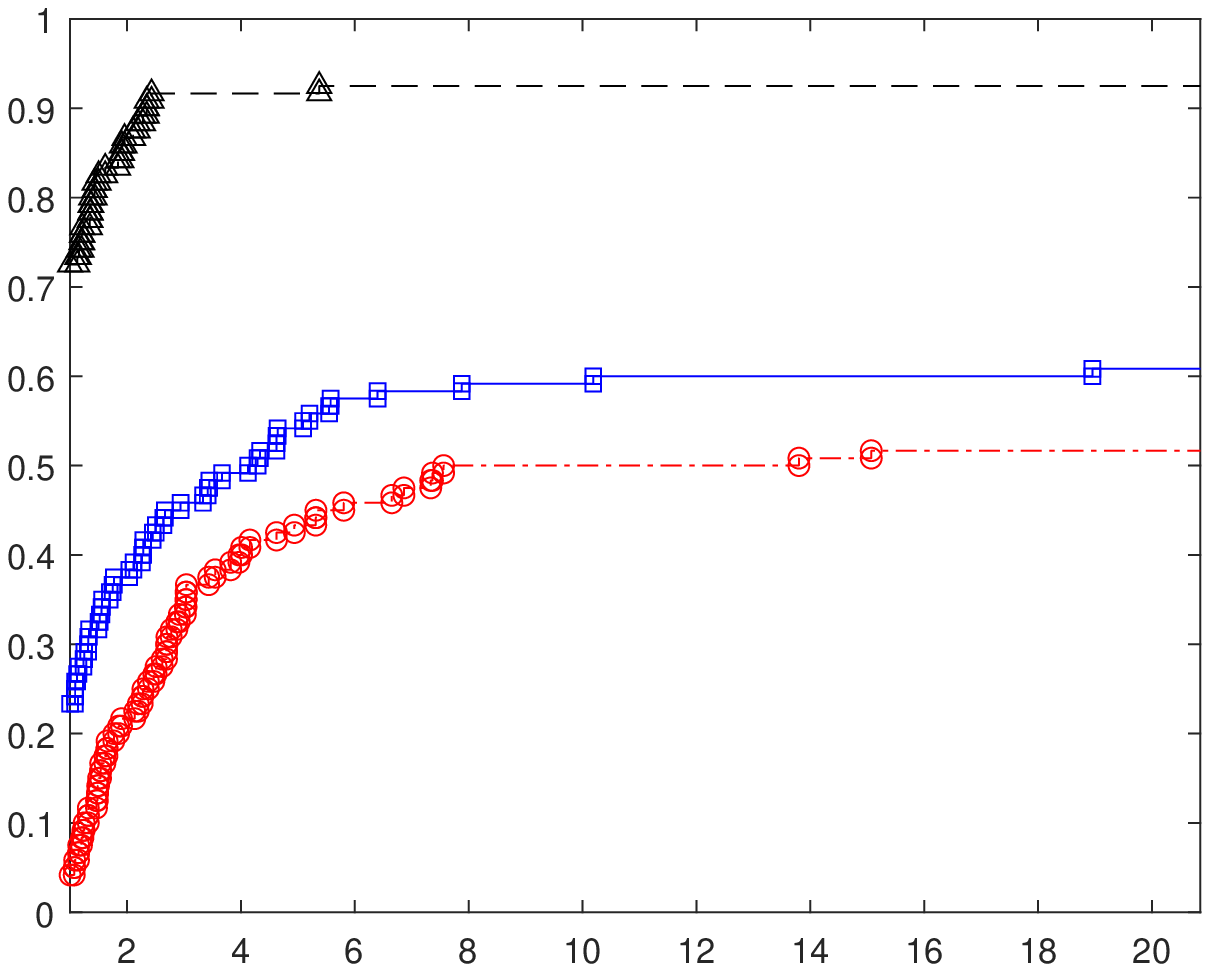}
			\caption{Perf. p., $\tau=10^{-3}$}
		\end{subfigure}
		\caption{Smooth case: results for all the instances}
		\label{fig:overall}
	\end{figure}

	\subsection{Nonsmooth problems}\label{sec:NSprob}
	We finally report a preliminary comparison between a direct search strategy and a linesearch strategy on two nonsmooth instances of \eqref{eq:opt} from \cite{hosseini2019nonsmooth}, each with 15 different problem sizes (from 2 to 200), thus getting a total number of 30 tested instances.

In the direct search strategy (RDS-DD+), we apply the RDS-SB method until $\alpha_{k + 1} \leq \alpha_{\epsilon}$, at which point we switch to the nonsmooth version RDS-DD. Analogously, in the linesearch strategy (RDSE-DD+), we apply the RDSE-SB method until $\max_{j \in [1:K]} \tilde{\alpha}_{k + 1}^j \leq \alpha_{\epsilon}$, at which point we switch to the nonsmooth version RDSE-DD. 	Both strategies use a threshold parameter $\alpha_{\epsilon} > 0$ to switch from the smooth to the nonsmooth DFO algorithm. We refer the reader to \cite{fasano2014linesearch} and references therein for other direct search strategies combining  coordinate and dense  directions.  \\ 
We report, in Figure \ref{fig:nonsmooth}, the comparison between the two considered strategies. As in the smooth case, the linesearch based strategy outperforms the simple direct search one. By taking a look at the detailed results in Section \ref{s:anr}, we can once again  see how the gap between the  algorithms gets larger as the problem dimension gets large enough.
	\begin{figure}[h]
	\centering
	\begin{subfigure}[b]{0.21\textwidth}
		\includegraphics[width=\linewidth]{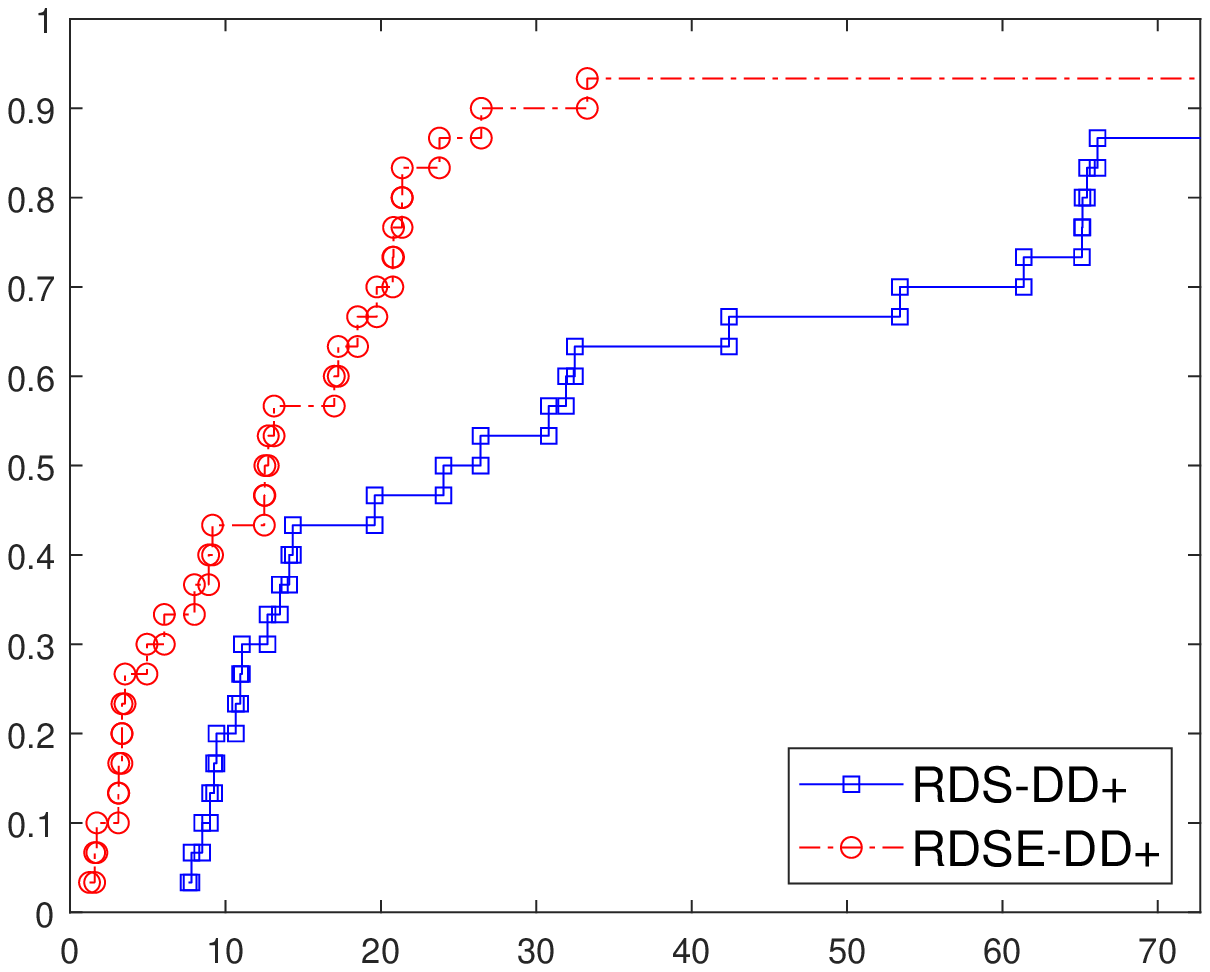}
		\caption{Data p., $\tau=10^{-1}$}
	\end{subfigure}	
	\begin{subfigure}[b]{0.21\textwidth}
		\includegraphics[width=\linewidth]{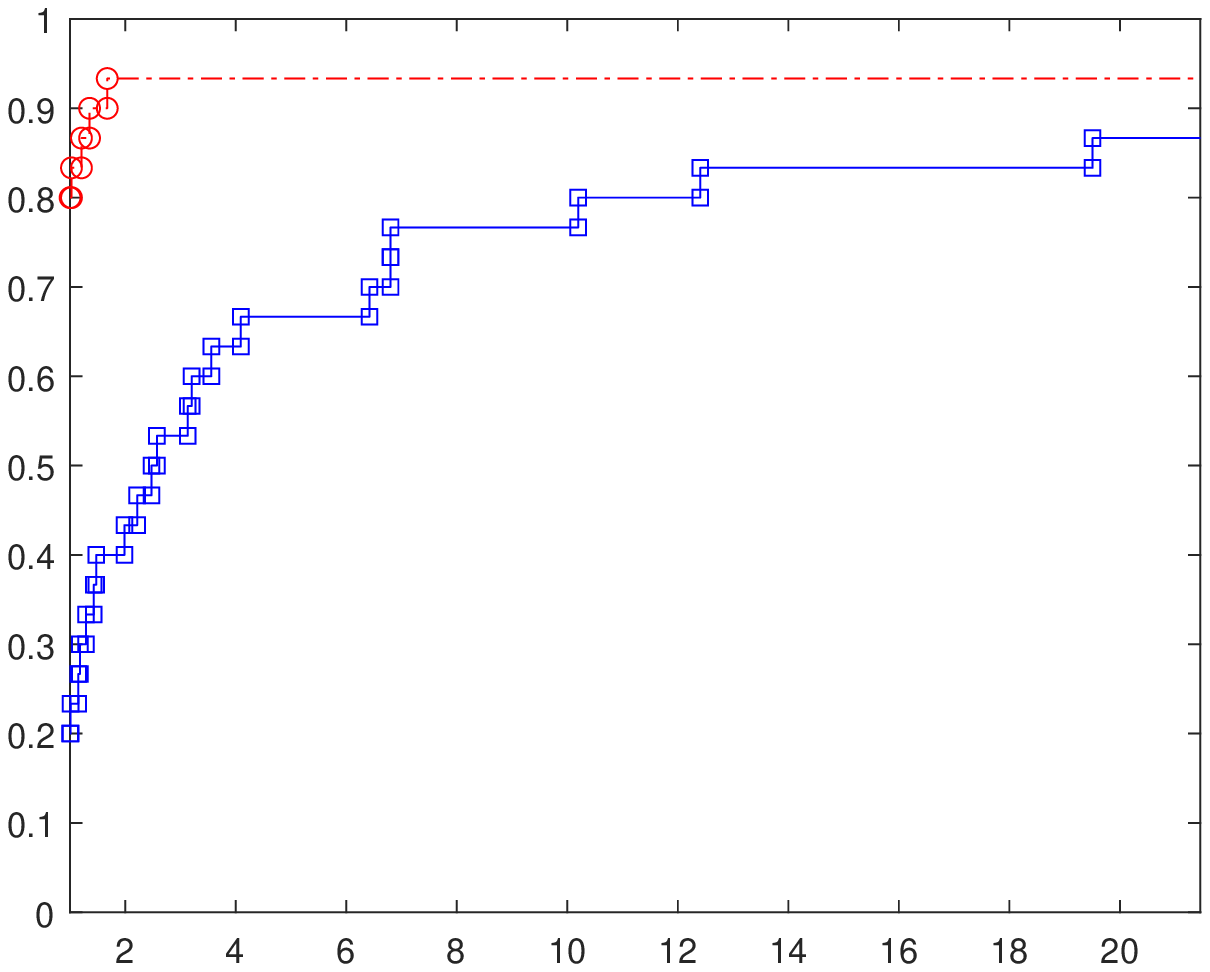}
		\caption{Perf. p., $\tau=10^{-1}$}
	\end{subfigure}
	\begin{subfigure}[b]{0.21\textwidth}
	\includegraphics[width=\linewidth]{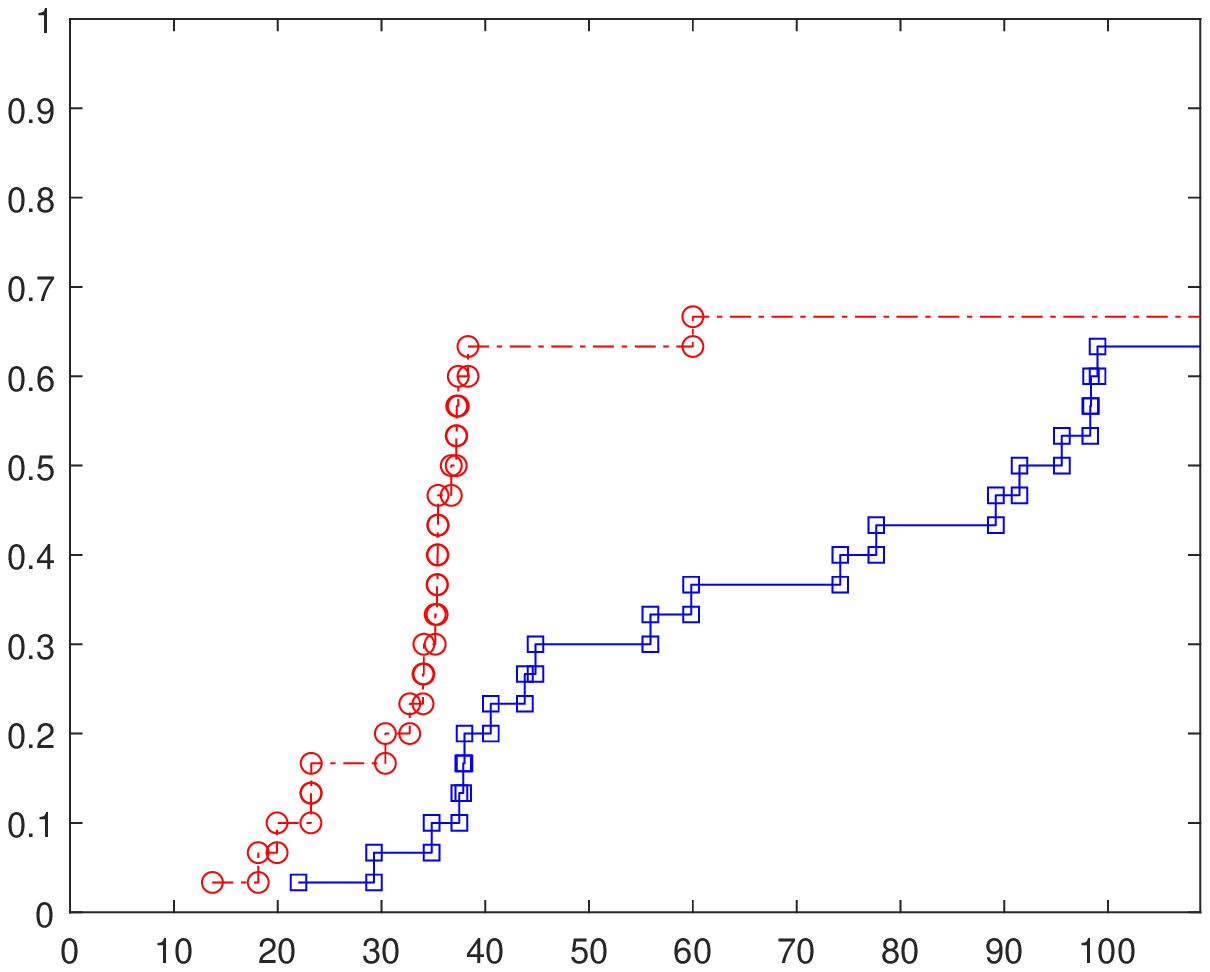}
	\caption{Data p., $\tau=10^{-3}$}
\end{subfigure}	
\begin{subfigure}[b]{0.21\textwidth}
	\includegraphics[width=\linewidth]{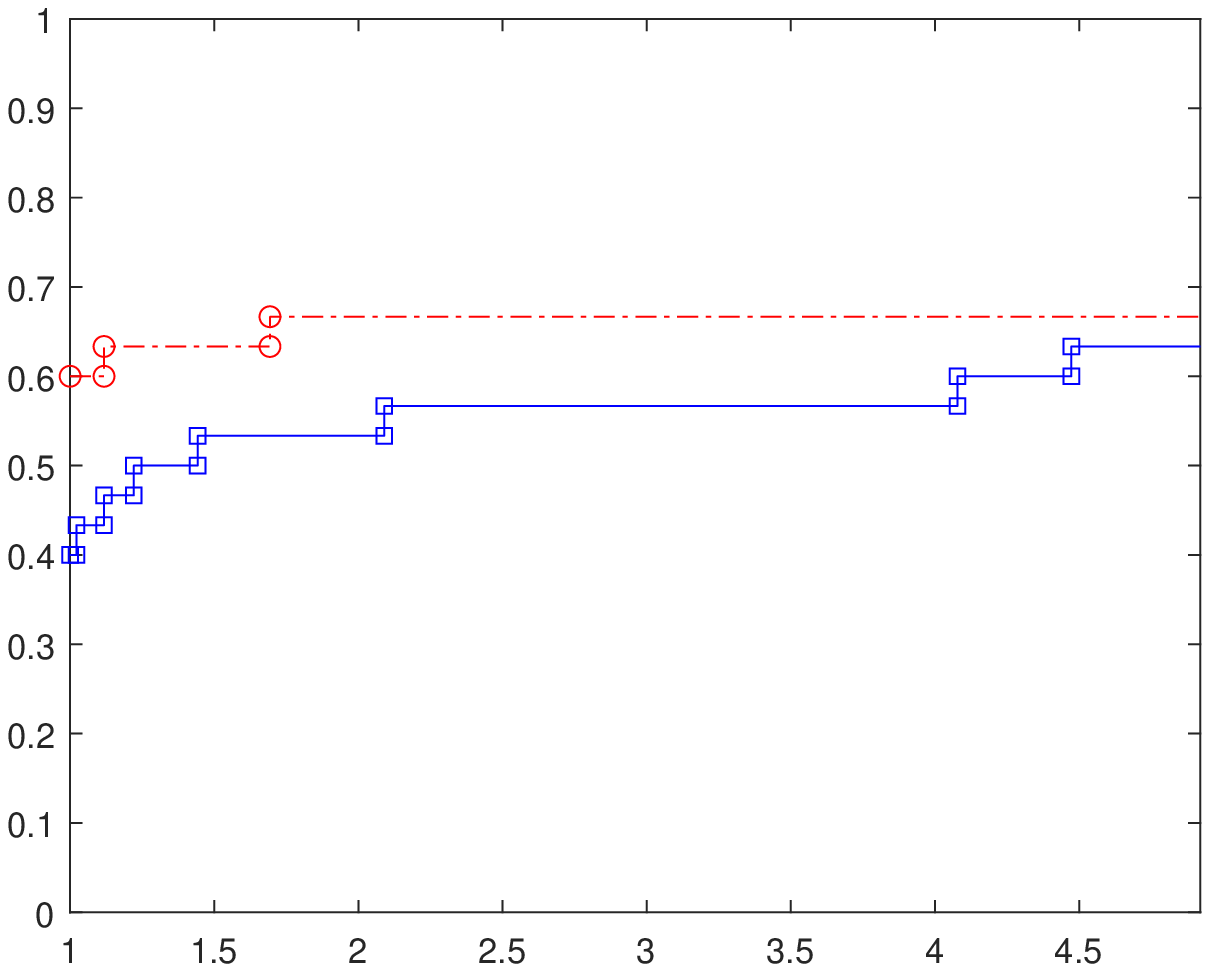}
	\caption{Perf. p., $\tau=10^{-3}$}
\end{subfigure}
	\caption{Nonsmooth case: results for all the instances}
\label{fig:nonsmooth}
\end{figure}	

\section{Conclusion}\label{s:con}
In this paper, we presented direct search algorithms with and without an extrapolation linesearch for minimizing functions over a Riemannian manifold. We found that, modulo modifications to account for the changing vector space structure with the iterations, direct search strategies provide guarantees of convergence for both smooth and nonsmooth objectives. We found also that in practice, in our numerical experiments, the extrapolation linesearch speeds up the performance of direct search in both cases, and it appears that it even outperforms a gradient approximation based zeroth order Riemannian algorithm in the smooth case. As a natural extension for future work, considering the stochastic case would be a reasonable next step.

	\bibliographystyle{siamplain}
	\bibliography{refs}

	\section{Appendix}

		\subsection{Proofs}
			In order to prove Proposition \ref{p:std} we first need the following lemma. 
	\begin{lemma} \label{l:hlem}
	For a Lipschitz continuous function $h: \R^m \rightarrow \R$, $\tilde{y}, \tilde{v} \in \R^m$, if $\tilde{y}_k \rightarrow \tilde{y}$, $\tilde{v}_k \rightarrow \td{v}$ and $t_k \rightarrow 0$ then
	\begin{equation}
		h^{\circ}(\td{y},\td{v}) \geq \limsup_{k \rightarrow \infty} \frac{h(\tilde{y}_k + t_k \tilde{v}_k) - h(\tilde{y}_k)}{t_k} \, .
	\end{equation}
\end{lemma}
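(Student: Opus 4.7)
The plan is to reduce the statement to the definition of the Clarke directional derivative
\[
h^{\circ}(\tilde{y},\tilde{v}) = \limsup_{z \to \tilde{y},\, t \downarrow 0} \frac{h(z + t\tilde{v}) - h(z)}{t},
\]
by decoupling the perturbation of the base point from the perturbation of the direction. Concretely, I would write, for each $k$,
\[
\frac{h(\tilde{y}_k + t_k \tilde{v}_k) - h(\tilde{y}_k)}{t_k}
= \frac{h(\tilde{y}_k + t_k \tilde{v}) - h(\tilde{y}_k)}{t_k}
+ \frac{h(\tilde{y}_k + t_k \tilde{v}_k) - h(\tilde{y}_k + t_k \tilde{v})}{t_k},
\]
so the problem splits into two limits that can be handled separately.

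For the first summand, note that $\tilde{y}_k \to \tilde{y}$ and $t_k \to 0^+$, and the direction is now exactly $\tilde{v}$, so this quotient is of the form appearing in the definition of $h^{\circ}(\tilde y,\tilde v)$; taking $\limsup_{k \to \infty}$ therefore yields an upper bound of $h^{\circ}(\tilde{y},\tilde{v})$ directly from the definition. For the second summand, I would invoke the Lipschitz property of $h$ with constant $L_h$ to obtain
\[
\left|\frac{h(\tilde{y}_k + t_k \tilde{v}_k) - h(\tilde{y}_k + t_k \tilde{v})}{t_k}\right|
\leq \frac{L_h\,t_k\,\n{\tilde{v}_k - \tilde{v}}}{t_k} = L_h \n{\tilde{v}_k - \tilde{v}} \to 0,
\]
since $\tilde{v}_k \to \tilde{v}$.

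Combining the two bounds via subadditivity of $\limsup$ (the second term actually converges, so $\limsup$ and $\lim$ agree and the inequality is preserved), we obtain
\[
\limsup_{k \to \infty} \frac{h(\tilde{y}_k + t_k \tilde{v}_k) - h(\tilde{y}_k)}{t_k} \leq h^{\circ}(\tilde{y},\tilde{v}) + 0,
\]
which is the desired inequality. The argument is entirely standard and essentially free of obstacles; the only thing to be careful about is (i) implicitly using that $t_k > 0$ so that $t_k \downarrow 0$ is meaningful in the Clarke limsup, and (ii) ensuring the Lipschitz constant of $h$ is uniform on the bounded set containing $\{\tilde{y}_k + t_k \tilde{v}_k\}$ and $\{\tilde{y}_k + t_k \tilde{v}\}$, which is immediate from convergence of these sequences and the global (or local) Lipschitz hypothesis on $h$.
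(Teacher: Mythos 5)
Your proposal is correct and follows essentially the same route as the paper's proof: both replace $\tilde{v}_k$ by $\tilde{v}$ at a cost of $L_h t_k \n{\tilde{v}_k - \tilde{v}} = o(t_k)$ using the Lipschitz property, and then bound the resulting difference quotient by $h^{\circ}(\tilde{y},\tilde{v})$ directly from the definition of the Clarke derivative. Your added remarks on $t_k > 0$ and uniformity of the Lipschitz constant are sensible but not needed beyond what the paper already assumes.
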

\begin{proof}
	We have
	\begin{equation} \label{eq:otk}
		|h(\tilde{y}_k + t_k \tilde{v}_k) - h(\tilde{y}_k + t_k \td{v})| \leq t_k L_h\n{\td{v} - \td{v}_k} = o(t_k)\, ,
	\end{equation}
	with $L_h$ the Lipschitz constant of $h$. Then
	\begin{equation}
		\begin{aligned}
			& \limsup_{k \rightarrow \infty} \frac{h(\tilde{y}_k + t_k \tilde{v}_k) - h(\td{y}_k)}{t_k} = \limsup_{k   \rightarrow \infty} \frac{h(\tilde{y}_k + t_k \td{v}) + o(t_k) - h(\td{y}_k)}{t_k} \\
			= & \limsup_{k \rightarrow \infty} \frac{h(\tilde{y}_k + t_k \td{v}) - h(\td{y}_k)}{t_k} \leq h^{\circ}(\td{y}, \td{v}) \, ,	
		\end{aligned}
	\end{equation}
	where we used \eqref{eq:otk} in the first equality, and with the inequality true by definition of the Clarke derivative.
\end{proof}

		\begin{proof}[Proof of Proposition \ref{p:std}]
		Let $(\varphi)$ be a chart defined in a neighborhood $U$ of $x \in M$. We use the notation $(\td{x}, \td{d})= (\varphi(x), d\varphi(x)d)$ for $(x, d) \in T\MM$. We pushforward the manifold and the related structure with the chart $\varphi$, i.e. for $\bar{\varphi} = \varphi^{-1}$ we define $\tilde{f} = f\circ \bar{\varphi}$, $\td{U} = \varphi(U)$, $\tilde{R}(\tilde{y}, \tilde{d}) = R(y, d)$, for $d, q \in T_xM$ we define $g(\td{d}, \td{q}) = \Sc{d}{q}_x$, $\n{\td{d} - \td{q}}_{\td{x}} = \n{d - q}_x$,  and $\td{\Gamma}_{\td{x}}^{\td{y}}(\td{d}) = \Gamma_x^y(d)$. With slight abuse of notation we use $\dist(\td{x}, \td{y})$ to denote $\dist(x, y)$. We also define as $\grad \td{f}$ the gradient of $\td{f}$ with respect to the scalar product $g$, so that $g(\grad \td{f}(\td{x}), \td{d}) = \Sc{\nabla \td{f}(x)}{d}$ for any $\td{d} \in \R^m$. Importantly, by the equivalence of norms in $\R^m$ we can use $O(\n{\td{d}}_x)$ and $O(\n{\td{d}})$ interchangeably. \\
		We first prove \eqref{eq:taylor} in $x$ for some constant $L > 0$ and any $d$ with $\n{d} \leq B$ for some $B > 0$. Equivalently, we want to prove
		\begin{equation} \label{eq:th}
			\td{f}(\td{R}(\td{x}, \td{d})) \leq \td{f}(\td{x}) + g(\grad \td{f}(\td{x}), \td{d}) + \frac{L}{2}\n{\td{d}}_{\td{x}}^2 \, .
		\end{equation}
		for $\td{d}$ s.t. $\n{\td{d}} \leq B$. \\
By compactness we can choose $(\varphi, U)$ and $B > 0$ in such a way that, for every $\td{y} \in \td{U}_1 \subset \td{U}$ and $\td{d}$ with $\n{\td{d}}_{\td{y}} \leq B$ we have $\td{R}(\td{y}, \td{d}) \in \td{U}_2 \subset \td{U}$, with $\td{U}_2$ compact and $B > 0$ independent from $\td{x}, \td{y}, \td{d}$.  \\
First, since $\td{R}$ is in particular $C^1$ regular
\begin{equation}
	\tilde{R}(\tilde{x}, \tilde{d}) = \tilde{x} + O(\n{\td{d}}_{\td{x}})\, ,
\end{equation}
and by smoothness of the parallel transport
\begin{equation} \label{eq:gammasmooth}
	\td{\Gamma}_{\td{x}}^{\td{y}} \td{q} = \td{q} + O(\n{\td{x} - \td{y}}) \, .
\end{equation}
Furthermore,
\begin{equation}
	\grad \td{f}(\td{x} + \td{q}) =\td{\Gamma}_{\td{x}}^{\td{x} + \td{q}}  \grad \td{f}(\td{x}) + O(\dist(\td{x}, \td{x} + \td{q})) \, ,
\end{equation}
by the Lipschitz continuity assumption \eqref{eq:lipf}, and consequently
\begin{equation} \label{eq:gradR}
	\begin{aligned}
  & \grad \td{f}(\td{R}(\td{x}, \td{q})) =\td{\Gamma}_{\td{x}}^{\td{R}(\td{x}, \td{q})}  \grad \td{f}(\td{x}) + O(\dist(\td{x}, \td{R}(\td{x}, \td{q}))) \\
 = & \td{\Gamma}_{\td{x}}^{\td{R}(\td{x}, \td{q})}  \grad \td{f}(\td{x}) + O(\n{\td{q}}) \, ,		
	\end{aligned}
\end{equation} 
where we used \eqref{eq:rbounded} in the last equality. \\
Finally, since, $\frac{d}{dt}\td{R}(\td{x}, t\td{d})$ is $C^1$ regular, we also have
\begin{equation} \label{eq:ddt}
	\begin{aligned}
		 &\frac{d}{dt} \td{R}(\td{x}, t\td{q})|_{t = h} = \frac{d}{dt} \td{R}(\td{x}, t\td{q})|_{t = 0}  + 
	O(\n{h \td{q}}) \\ 
	= & \td{q} + O(h\n{\td{q}}) = \td{\Gamma}_{\td{x}}^{R(\td{x}, h\td{q})}\td{q} + O(\n{R(\td{x}, h\td{q}) - \td{x}}) + O(h \n{\td{q}}) = \td{\Gamma}_{\td{x}}^{R(\td{x}, h\td{q})}\td{q} + O(h \n{\td{q}})\, ,
	\end{aligned}
\end{equation}
where we used \eqref{eq:gammasmooth} in the third equality, and \eqref{eq:rbounded} in the last one. Again by compactness, for $\td{y} \in \td{U}_1$, $t \leq 1$, $\n{\td{q}}, \n{\td{d}} \leq B$ the implicit constants can be taken with no dependence from the variables. \\
Now for $\td{d}$ s.t. $\td{d} \leq B$ define $\td{q} = B \td{d}/\n{\td{d}}$, so that $\td{d} = \bar{t} \td{q}$ for $\bar{t} =\n{\td{d}}/B$. Then we obtain \eqref{eq:th} reasoning as follows:
\begin{equation}
	\begin{aligned}
		& \td{f}(\td{R}(\td{x}, \td{d})) - \td{f}(\td{R}(\td{x}, 0)) = \td{f}(\td{R}(\td{x}, \bar{t} q)) - \td{f}(\td{R}(\td{x}, 0)) \\ 
		= & \int_{0}^{\bar{t}} \frac{d}{dt} \td{f}(\td{R}(\td{x} + t\td{q})) dt = \int_{0}^{\bar{t}} g(\grad f(\td{R}(\td{x}, t\td{q})), \frac{d}{dt} \td{R}(\td{x}, t\td{d})) dt \\
= & \int_{0}^{\bar{t}} g(\td{\Gamma}_{\td{x}}^{\td{R}(\td{x}, t\td{q})}  \grad \td{f}(\td{x}) + O(t\n{\td{q}}),  \td{\Gamma}_{\td{x}}^{\td{R}(\td{x},t\td{d})}\td{d} + O(t \n{\td{q}})) dt  \\ 
= & \int_{0}^{\bar{t}} \left(g(\td{\Gamma}_{\td{x}}^{\td{R}(\td{x}, t\td{q})} \grad \td{f}(\td{x}), \td{\Gamma}_{\td{x}}^{\td{R}(\td{x},t\td{d})}\td{d}) +  O(t\n{\td{q}})\right) dt \\
= & g(\grad f(\td{x}), \td{d}) + O(\bar{t}^2 \n{\td{q}}) = g(\grad f(\td{x}), \td{d}) + O(\n{\td{d}}^2)	\, ,
	\end{aligned}			
\end{equation}
where we used \eqref{eq:gradR} and \eqref{eq:ddt} in the fourth inequality. To conclude, notice that the above argument does not depend from the choice of $\td{x} \in \td{U}_1$, so that it can be extended to every $\td{y} \in \td{U}_1$ and then by compactness to every $y \in M$.
\end{proof}

		\begin{proof}[Proof of Lemma \ref{l:clarkeR}]
		With the notation introduced in the proof of Proposition \ref{p:std}, without loss of generality we assume that $U$ is bounded and that $\varphi$ can be extended to a neighborhood containing the closure of $U$. \\
		First, since pushforward $\td{R}$ of a $C^2$ retraction on $\R$ is a $C^2$ retraction itself of $T \R^m$ on $\R^m$, we have the Taylor expansion
		\begin{equation} \label{eq:rtaylor}
			\td{R}(\td{y}, \td{v}) = \td{y} + \td{v} + O(\n{\td{v}}^2) \, ,
		\end{equation}
		with the implicit constant uniform for $\td{y}$ varying in $\td{U}$ and $\td{v}$ chosen in $\R^m$. \\
		Second, for any fixed constant $B> 0$, by continuity we have 
		\begin{equation} \label{eq:bgamma}
			\n{\td{\Gamma}_{\td{x}}^{\td{x}_k}\td{q} - \td{q}} \leq O\left( \n{\td{x} - \td{x}_k} \right)\, ,
		\end{equation}
		for $k \rightarrow \infty$, $\td{q} \in \R^m$ with $\n{\td{q}} \leq B$, and with a uniform implicit constant. \\
		Therefore
		\begin{equation} \label{eq:dktdk}
			\begin{aligned}
				& \n{\td{d}_k - \td{d}} \leq \n{\td{d}_k - \td{\Gamma}_{\td{x}}^{\td{x}_k}\td{d}} + \n{\td{\Gamma}_{\td{x}}^{\td{x}_k}\td{d} - \td{d}} \leq O\left( \n{\td{d}_k - \td{\Gamma}_{\td{x}}^{\td{x}_k}(\td{d})}_{\td{x}} \right) + O\left(\n{\td{x} - \td{x}_k}\right) 	\\
				& = O\left(\n{d_k - \Gamma_x^{x_k}(d)}_{x} \right) +  O\left( \n{\td{x} - \td{x}_k} \right)  = o(1) \, ,
			\end{aligned}
		\end{equation} 
		where in the second inequality we used \eqref{eq:bgamma}, and in the last equality we used $d_k \rightarrow d$ together with $\td{x}_k \rightarrow \td{x}$. \\
		Let now $\td{v}_k = (\td{R}(\td{x}_k, t_k\td{d}_k) - \td{x}_k)/t_k$. Then
		\begin{equation} \label{eq:vkd}
			\begin{aligned}
				& \n{\td{v}_k - \td{d}} = \frac{1}{t_k}\n{\td{R}(\td{x}_k, t_k\td{d}_k) - \td{x}_k - t_k\td{d}} \leq \frac{1}{t_k} 
				(\n{R(\td{x}_k, t_k\td{d}_k) - \td{x}_k - t_k\td{d}_k} + t_k\n{d_k - \td{d}_k}) \\ 
				= & \frac{1}{t_k} (O(t_k^2 \n{\tilde{d}_k}^2) + t_k o(1)) = o(1)\,  ,	
			\end{aligned}
		\end{equation}
		where we used \eqref{eq:rtaylor} and \eqref{eq:dktdk} for the first and the second summand in the second equality. In other words, $\td{v}_k \rightarrow \td{d}$. To conclude, 
		\begin{equation}
			\begin{aligned}
				& \limsup_{k \rightarrow \infty} \frac{f(R(y_k, t_kd_k)) - f(y_k)}{t_k} = \limsup_{k \rightarrow \infty}    \frac{\td{f}(\td{R}(\td{y}_k, t_k\td{d}_k)) - \td{f}(\td{y}_k)}{t_k} \\ 
				= & \limsup_{k \rightarrow \infty} \frac{\td{f}(\td{y}_k + t_k \td{v}_k) - \td{f}(\td{y}_k)}{t_k} \geq \td{f}^{\circ}(\td{x}, \td{d}) = f^{\circ}(x, d) \, ,
			\end{aligned}
		\end{equation}
		where in the inequality we were able to apply \eqref{l:hlem} because $\td{v}_k \rightarrow \td{d}$ by \eqref{eq:vkd}.
	\end{proof}

\subsection{Implementation details} \label{s:Id}
For all the problems, the manifold structure we used was the one available in the MANOPT library \cite{manopt}. 
 After a basic tuning phase, we set the algorithm parameters as follows:
we used $\gamma_1 = 0.61$, $\gamma_2 = 1$ and $\gamma= 0.77$ for Algorithm \ref{alg:ds},  $\gamma_1= 0.81$, $\gamma_2 = 3.12$ and $\gamma= 0.11$ for Algorithm \ref{alg:dse}, and the stepsize $1.64/n$ (recall that $n$ is the dimension of the ambient space) for the ZO-RGD method. \\ 
For the nonsmooth strategies RDS-DD+ and RDSE-DD+, we considered the same parameters of the smooth case for RDS-SB and RDSE-SB, setting $\alpha_{\epsilon} = 10^{-3}$, and for both RDS-DD and RDSE-DD used $\gamma_1= 0.95$, $\gamma_2= 2$, and $\gamma= 1$. \\
The positive spanning basis was obtained both in Algorithm \ref{alg:ds} and Algorithm \ref{alg:dse} by projecting the positive spanning basis $(e_1, ..., e_n, - e_1, ..., - e_n)$ of the ambient space $\mathbb{R}^n$ on the tangent space. The initial stepsize was set to $1$ for all the direct search methods, with no fine tuning. \\
We generated the starting point and the parameters related to the instances either with MATLAB rand function or by using the random element generators implemented in the MANOPT library. 
\subsection{Smooth problems}	\label{sapp:numtests}
We describe here the 8 smooth instances of problem \eqref{eq:opt} from \cite{absil2009optimization,boumal2020introduction}.

\subsubsection{Largest eigenvalue, singular value, and top singular values problem}
In the largest eigenvalue problem \cite[Section 2.3]{boumal2020introduction}, given a symmetric matrix $A \in S(n,n) = \{A \in \R^{n \times n} \ | \ A = A^\top \}$, we are interested in computing
\begin{equation} \label{p:leig}
	\max_{x \in \mathbb{S}^{n - 1}} x^\top Ax \, .
\end{equation}
The largest singular value problem \cite[Section 2.3]{boumal2020introduction} can be formulated generalizing \eqref{p:leig}: given $A \in \R^{m \times h}$, we are interested in 
\begin{equation}\label{p:lsv}
	\max_{x \in \Sb^{m - 1}, y \in \Sb^{h - 1}} x^\top Ay \, .
\end{equation}
Notice how the domain in \eqref{p:leig} and \eqref{p:lsv} are a sphere and the product of two spheres respectively. \\
Finally, to compute the sum of the top $r$ singular values, as explained in \cite[Section 2.5]{boumal2020introduction} it suffices to solve
\begin{equation}
	\max_{X \in S(m, r), Y \in S(h, r)} X^\top AY \, ,
\end{equation}
for $S(a,b)$ the Stiefel manifold with dimensions $(a, b)$.
\subsubsection{Dictionary learning}
The dictionary learning problem \cite[Section 2.4]{boumal2020introduction} can be formulated as
\begin{equation}\label{p:mainOT}
	\begin{array}{ll}
		\min & \n{Y -DC} + \lambda \n{C}_1, \\
		\quad \tx{s.t.} & D \in \R^{d \times h}, C\in \R^{h \times k}, \ \n{D_1} = ... = \n{D_h} = 1 \, , \\
	\end{array}
\end{equation} 
for a fixed $Y \in \R^{d \times k}$, $\lambda > 0$, $\n{\cdot}_1$ the $\ell_1 -$ norm, and $D_1, ..., D_h$ the columns of $D$. \\
In our implementation we smooth the objective by using a smoothed version $\n{\cdot }_{1, \varepsilon}$ of $\n{\cdot}_1$
\begin{equation}
	\n{C}_{1, \varepsilon} = \sum_{i, j} \sqrt{C_{i, j}^2 + \varepsilon^2} \, .
\end{equation}
In our tests, we generated the solution $\bar{C}$ using MATLAB sprand function, with a density of $0.3$, set the regularization parameter $\lambda$ to $0.01$ and $\varepsilon$ to $0.001$.
\subsubsection{Synchronization of rotations}
Let $\tx{SO}(d)$ be the special orthogonal group:
\begin{equation}
	\tx{SO}(d) = \{R \in \R^{d \times d} \ | \ R^\top R=I_d \tx{ and } \det(R) = 1\} \, .
\end{equation}
In the synchronization of rotations problem \cite[Section 2.6]{boumal2020introduction}, we need to find rotations $R_1, ..., R_h \in \tx{SO}(d)$ from noisy measurements $H_{ij}$ of $R_iR_j^{-1}$, for every $(i, j) \in E$, a subset of ${h \choose 2}$ (the set of couples of distinct elements in $[1:h]$). The objective is then
\begin{equation}
	\min_{\hat{R}_1, ..., \hat{R}_h \in \tx{SO}(d)} \sum_{(i,j) \in E} \n{\hat{R}_i - H_{ij} \hat{R}_j}^2 \, .
\end{equation}
In our tests, we considered the case $h = 2$ for simplicity.
\subsubsection{Low-rank matrix completion}
The low rank matrix completion problem \cite[Section 2.7]{boumal2020introduction} can be written, for a fixed matrix $M \in \R^{m \times h}$, as
\begin{equation}\label{p:lrm}
	\begin{array}{ll}
		\min & \sum_{(i, j) \in \Omega} (X_{ij} - M_{ij})^2, \\
		\quad s.t. &X \in \R^{m \times h}, \, \tx{rank}(X) = r \, ,
	\end{array}
\end{equation}
given a positive integer $r > 0$ and a subset of indices $\Omega \subset [1:m] \times [1:h]$. It can be proven that the optimization domain, that is the matrices in $\R^{m \times n}$ with fixed rank $r$, can be given a Riemannian manifold structure (see, e.g., \cite{vandereycken2010riemannian}).
\subsubsection{Gaussian mixture models}
In the Gaussian mixture model problem \cite[Section 2.8]{boumal2020introduction}, we are interested in computing a maximum likelihood estimation for a given set of observations $x_1, ..., x_h$:
\begin{equation}
	\max_{\substack{\hat{u}_1,..., \hat{u}_k \in \R^d \\ \hat{\Sigma}_1, ..., \hat{\Sigma}_k \in \tx{Sym}(d)^+, \\ w \in \Delta^{K - 1}_+}} \sum_{i = 1}^h \log\left( \sum_{k = 1}^K w_k \frac{1}{\sqrt{2\pi \det(\Sigma_k)}} e^{\frac{(x-\mu_k)^\top  \Sigma_k^{-1} (x-\mu_k)}{2}} \right) \, ,
\end{equation}
where $\tx{Sym}(d)^+$ is the manifold of positive definite matrices
\begin{equation}
	\tx{Sym}(d)^+ = \{X \in \R^{d \times d} \ | \ X = X^\top , \, X \succ 0 \}
\end{equation}
and $\Delta^{K - 1}_+$ is the subset of strictly positive elements of the simplex $\Delta^{K - 1}$, which can be given a manifold structure.
In our tests, we considered the case $K = 2$ and the reformulation proposed in \cite{hosseini2015matrix}, which does not use the unconstrained variables $(\hat{u}_1,..., \hat{u}_k)$. 
\subsubsection{Procrustes problem} \label{p:proc}
The Procrustes problem \cite{absil2009optimization} is the following linear regression problem, for fixed $A \in \R^{l \times n}$ and $B \in \R^{l \times p}$:
\begin{equation}
	\min_{x \in \mathcal M} \n{AX - B}_F^2 \, ,
\end{equation}
In our tests, we assumed the variable $X \in \R^{n \times p}$ to be in the Stiefel manifold $\tx{St}(n, p)$, a choice leading to the so called unbalanced orthogonal Procrustes problem.

\subsection{Nonsmooth problems} \label{s:nsp}
We report two nonsmooth problems taken from \cite{hosseini2019nonsmooth}.
\subsubsection{Sparsest vector in a subspace}
Given an orthonormal matrix $Q \in \R^{m \times n}$, the problem of finding the sparsest vector in the subspace generated by the columns of $Q$ can be relaxed as
\begin{equation} \label{p:nsp}
	\min_{x \in \mathbb{S}^{n - 1}} \n{Qx}_1 \, .
\end{equation}
\subsubsection{Nonsmooth low-rank matrix completion}
In the nonsmooth version of the low rank matrix completion problem \eqref{p:lrm} the Euclidean norm is replaced with the $l_1$ norm, so that in the objective we have a sum of absolute values:
\begin{equation}\label{p:lrmns}
	\begin{array}{ll}
		\min &\sum_{(i, j) \in \Omega} |X_{ij} - M_{ij}|, \\
		\quad s.t. &X \in \R^{m \times n}, \,  \tx{rank}(X) = r \, .
	\end{array}
\end{equation}
\subsection{Additional numerical results} \label{s:anr}
We include here the performance and data profiles split by problem size.
	\begin{figure}[h]
	\centering
		\begin{subfigure}[b]{0.21\textwidth}
	\includegraphics[width=\linewidth]{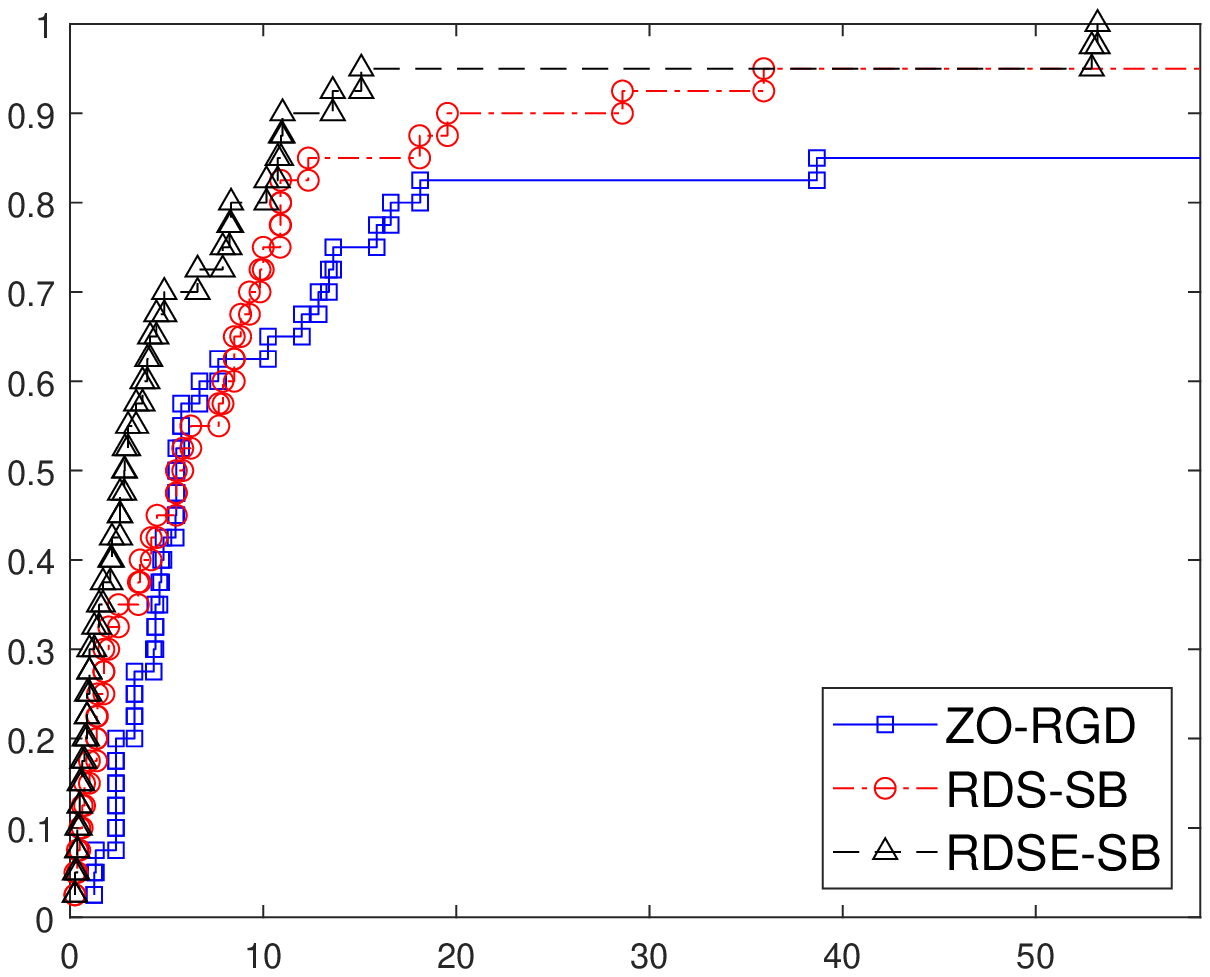}
	\caption{Data p., $\tau=10^{-1}$}
\end{subfigure}	
	\begin{subfigure}[b]{0.21\textwidth}
		\includegraphics[width=\linewidth]{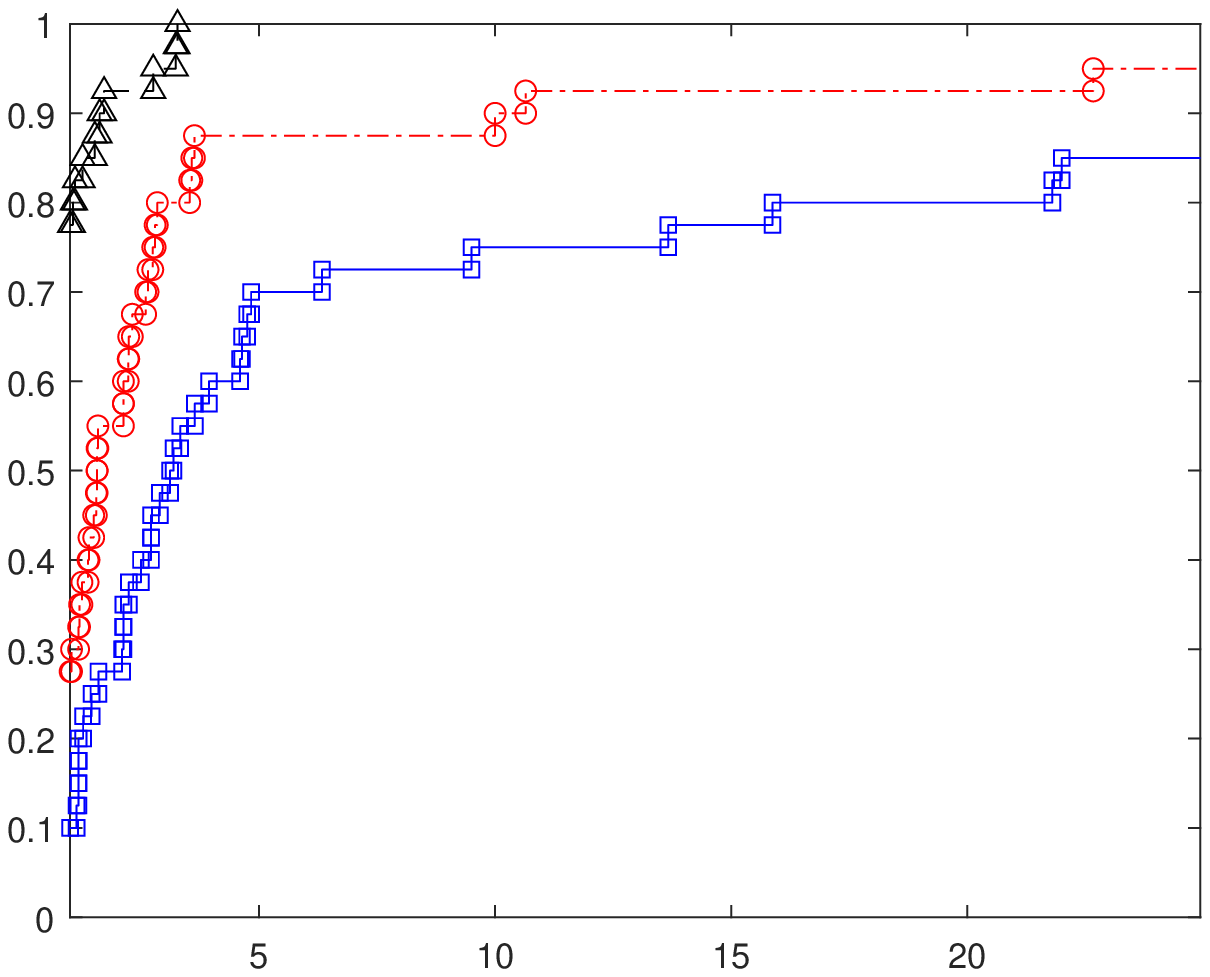}
		\caption{Perf. p., $\tau=10^{-1}$}
	\end{subfigure}	
	\begin{subfigure}[b]{0.21\textwidth}
		\includegraphics[width=\linewidth]{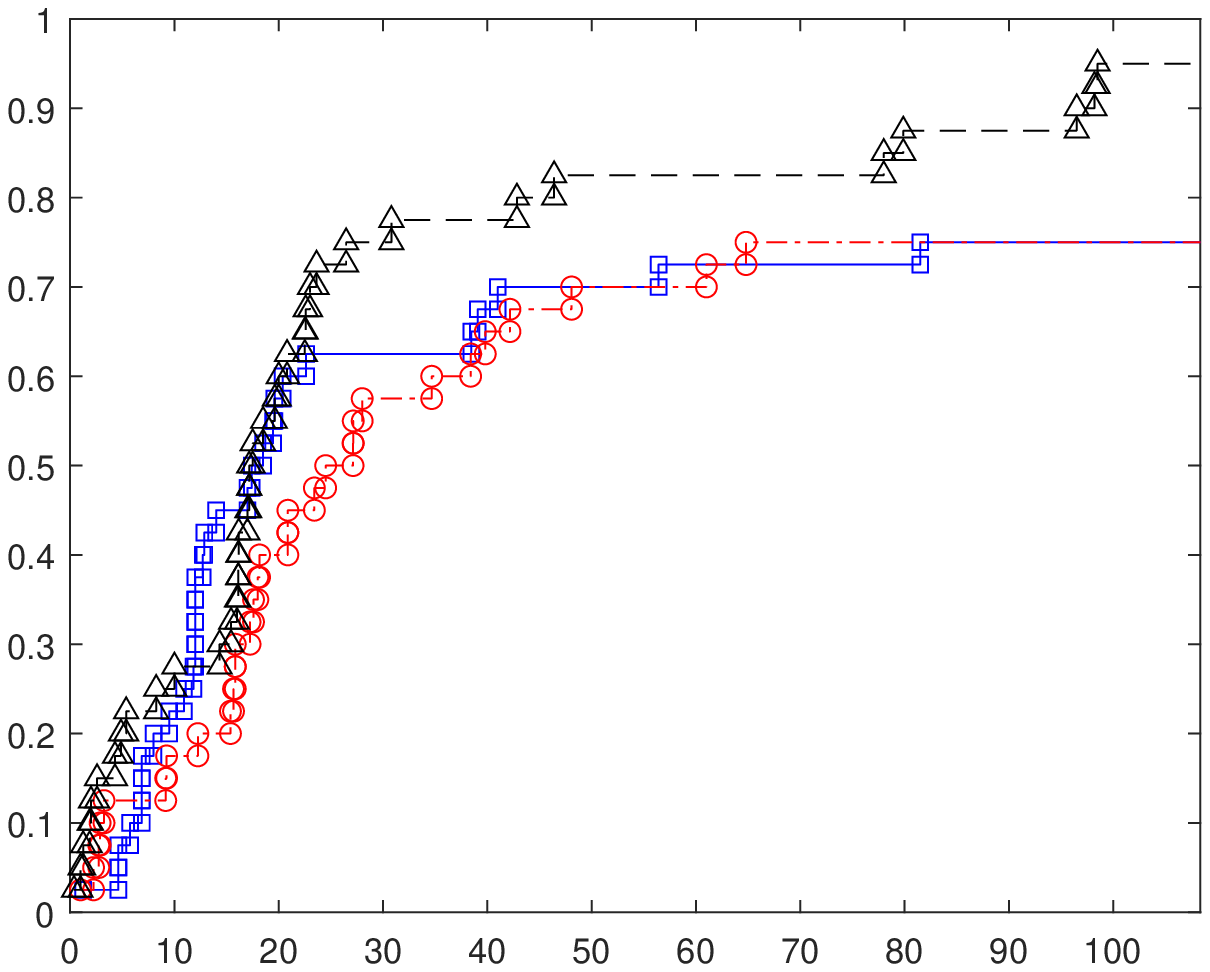}
		\caption{Data p., $\tau=10^{-3}$}
	\end{subfigure}	
	\begin{subfigure}[b]{0.21\textwidth}
		\includegraphics[width=\linewidth]{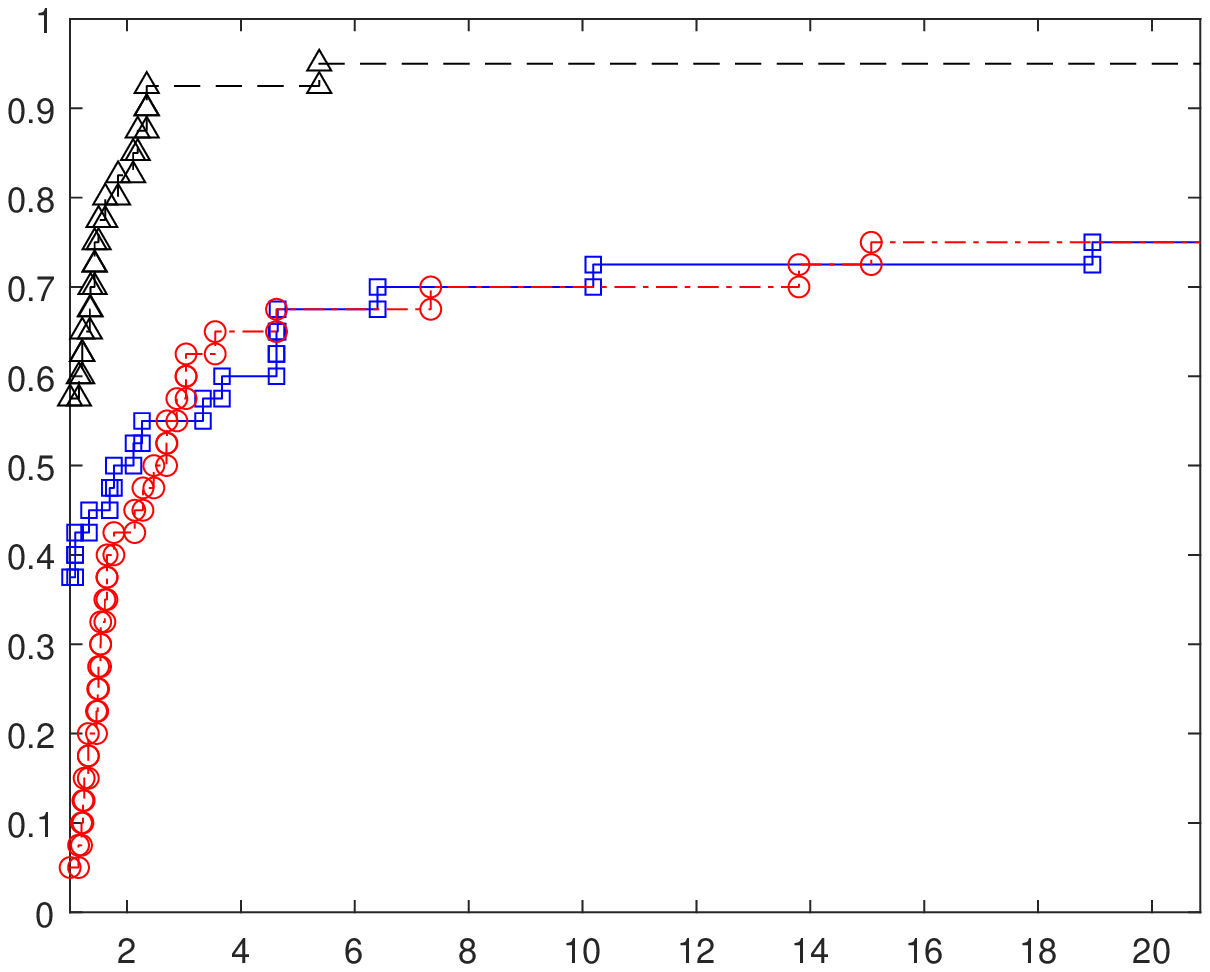}
		\caption{Perf. p., $\tau=10^{-3}$}
	\end{subfigure}	
\vspace{3mm}

		\begin{subfigure}[b]{0.21\textwidth}
	\includegraphics[width=\linewidth]{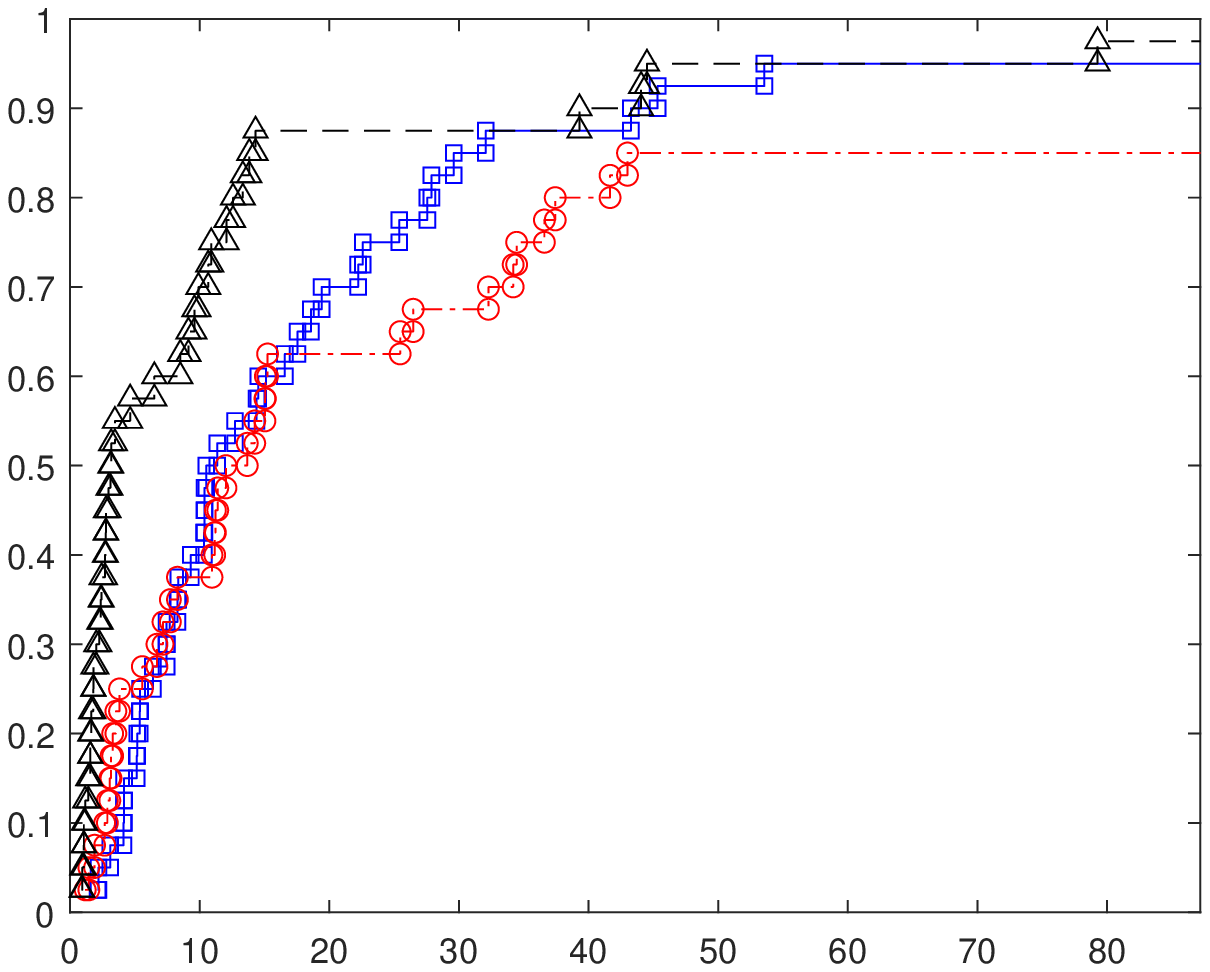}
	\caption{Data p., $\tau=10^{-1}$}
\end{subfigure}	
\begin{subfigure}[b]{0.21\textwidth}
	\includegraphics[width=\linewidth]{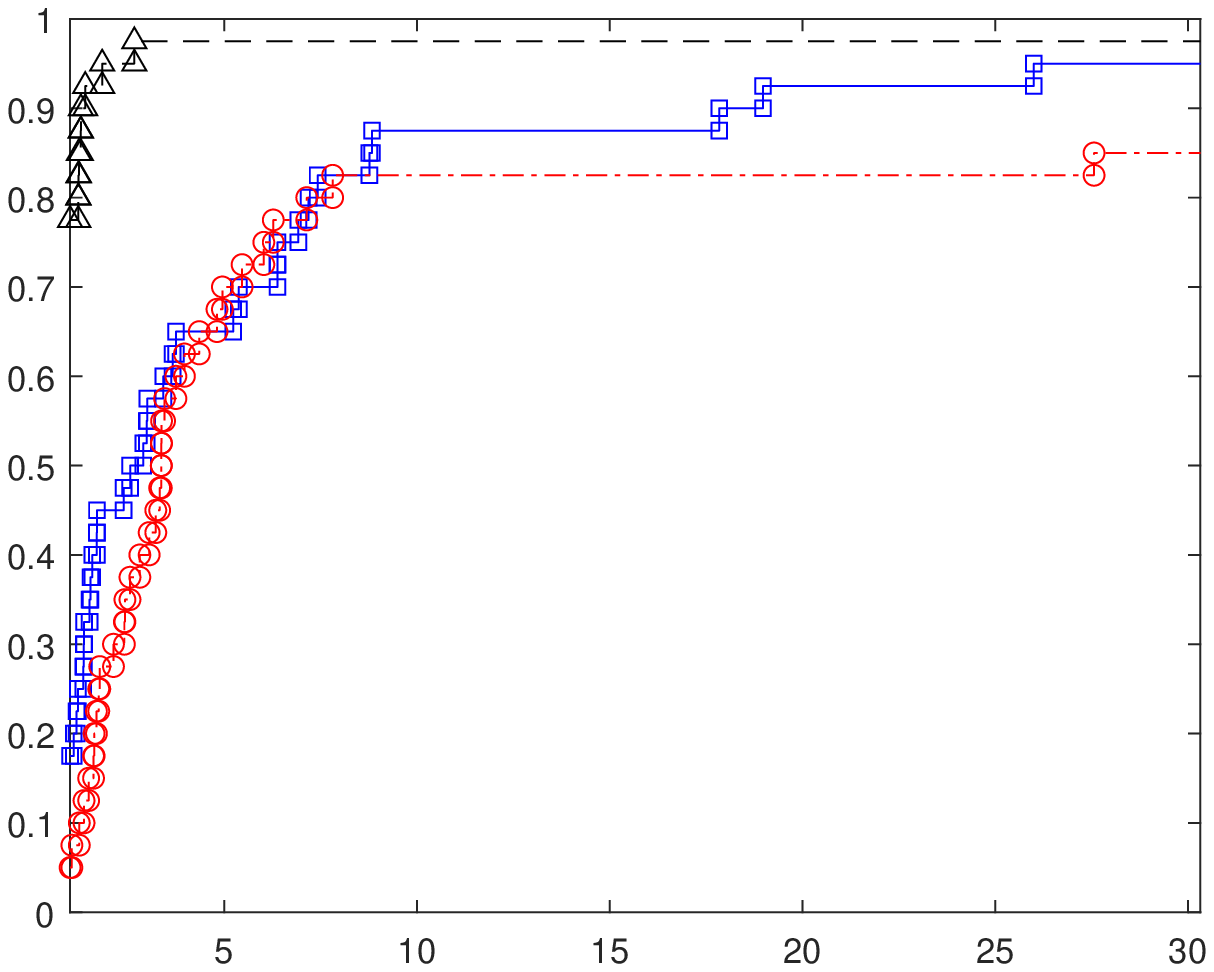}
	\caption{Perf. p., $\tau=10^{-1}$}
\end{subfigure}	
\begin{subfigure}[b]{0.21\textwidth}
	\includegraphics[width=\linewidth]{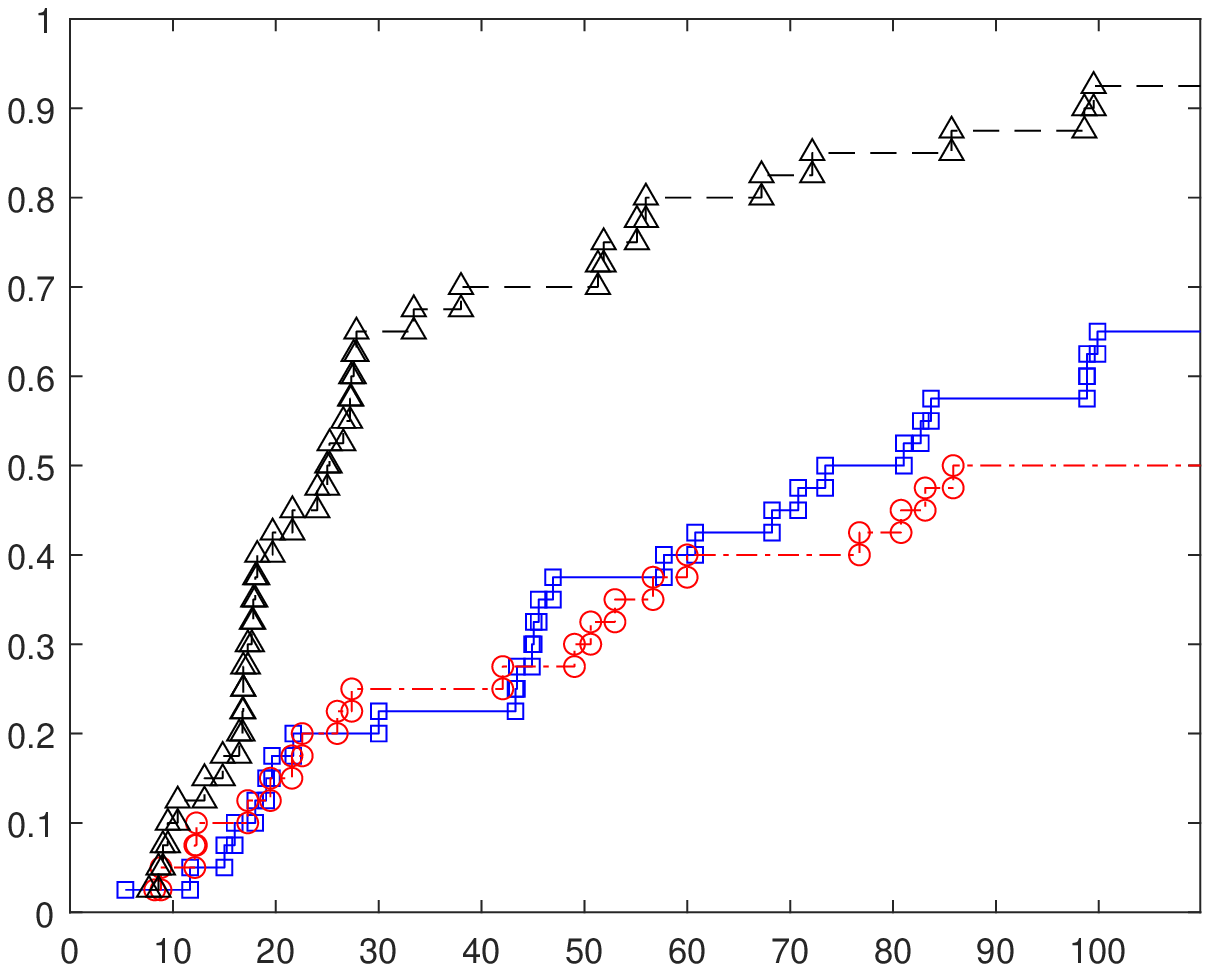}
	\caption{Data p., $\tau=10^{-3}$}
\end{subfigure}	
\begin{subfigure}[b]{0.21\textwidth}
	\includegraphics[width=\linewidth]{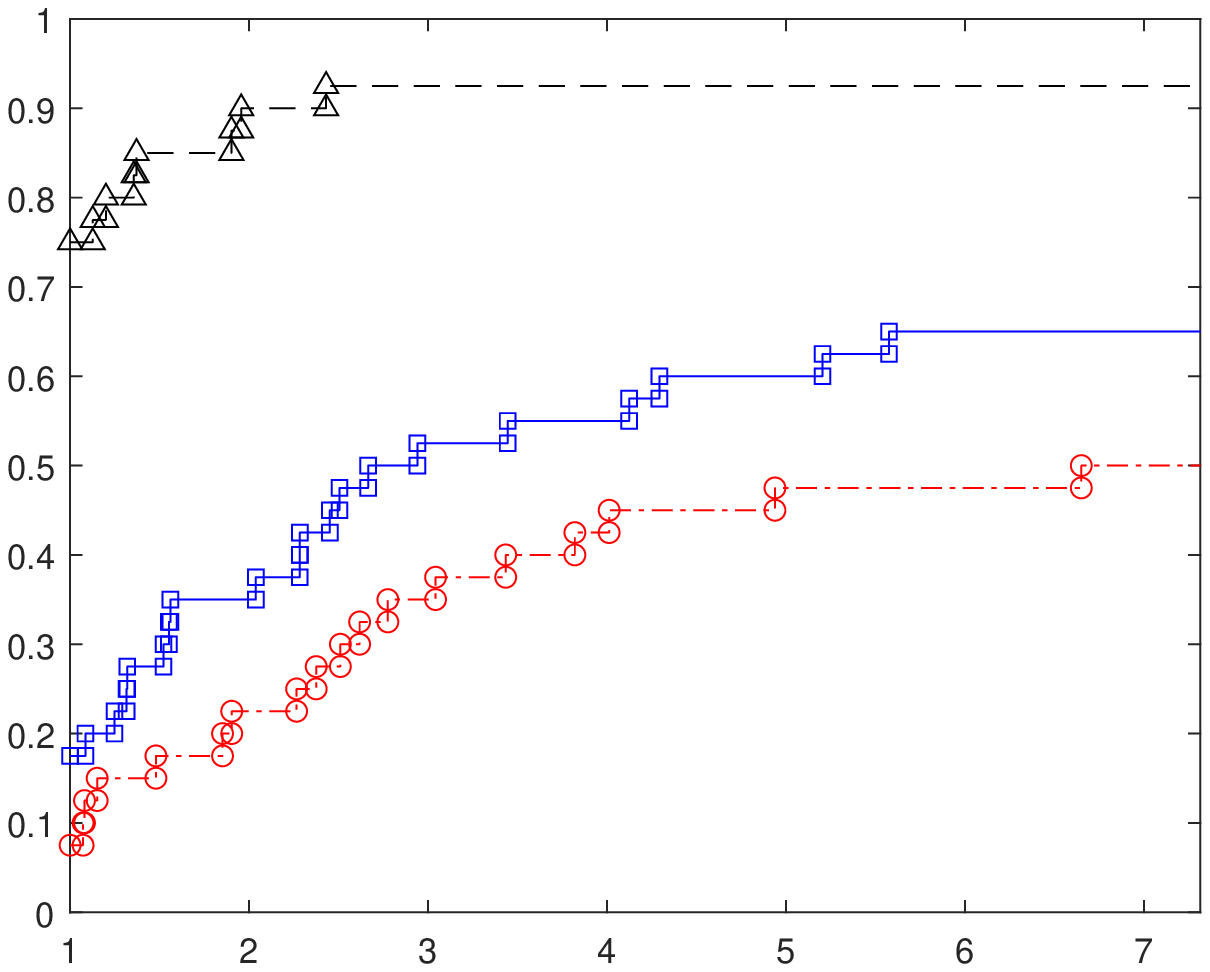}
	\caption{Perf. p., $\tau=10^{-3}$}
\end{subfigure}	
\vspace{3mm}

		\begin{subfigure}[b]{0.21\textwidth}
	\includegraphics[width=\linewidth]{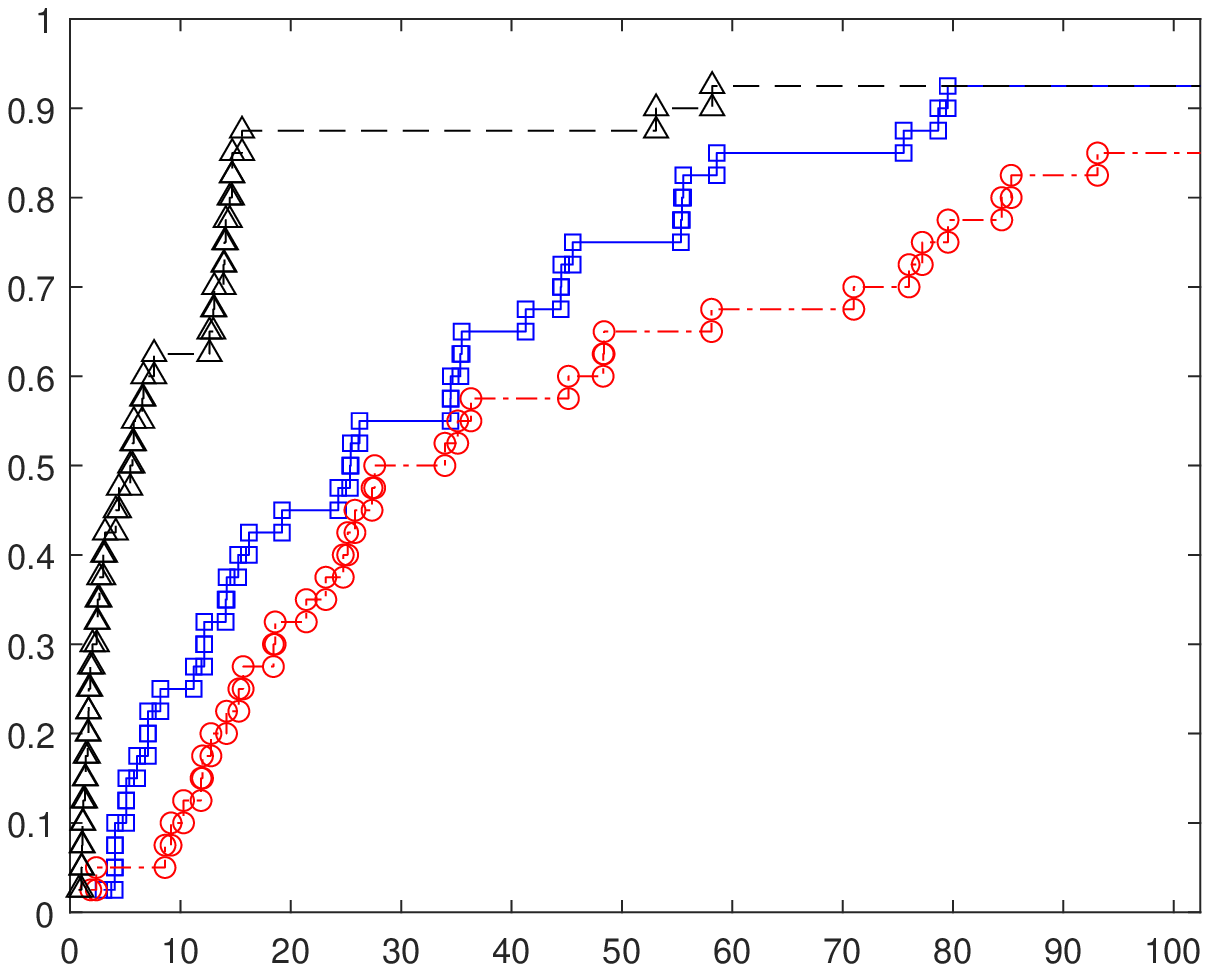}
	\caption{Data p., $\tau=10^{-1}$}
\end{subfigure}	
\begin{subfigure}[b]{0.21\textwidth}
	\includegraphics[width=\linewidth]{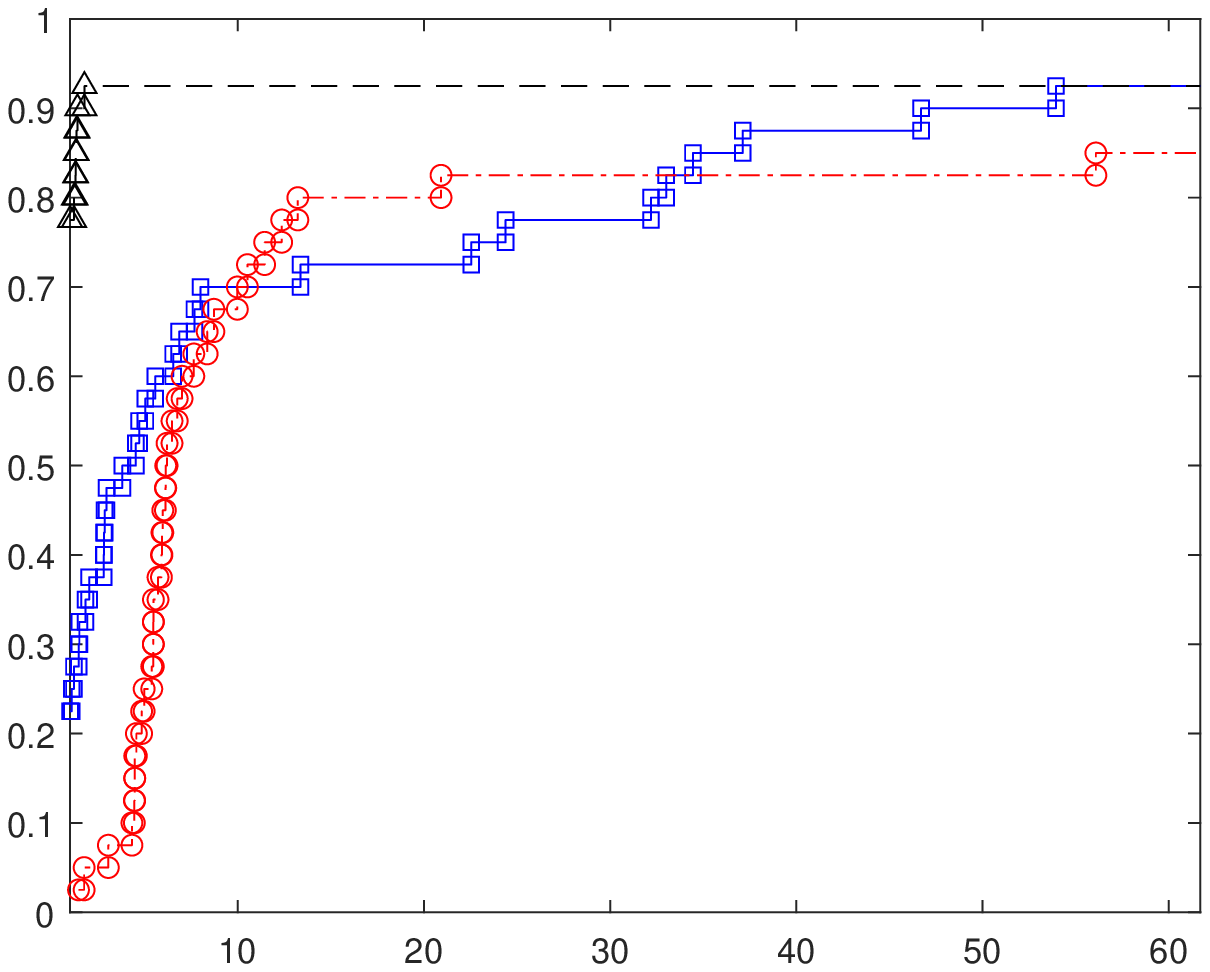}
	\caption{Perf. p., $\tau=10^{-1}$}
\end{subfigure}	
\begin{subfigure}[b]{0.21\textwidth}
	\includegraphics[width=\linewidth]{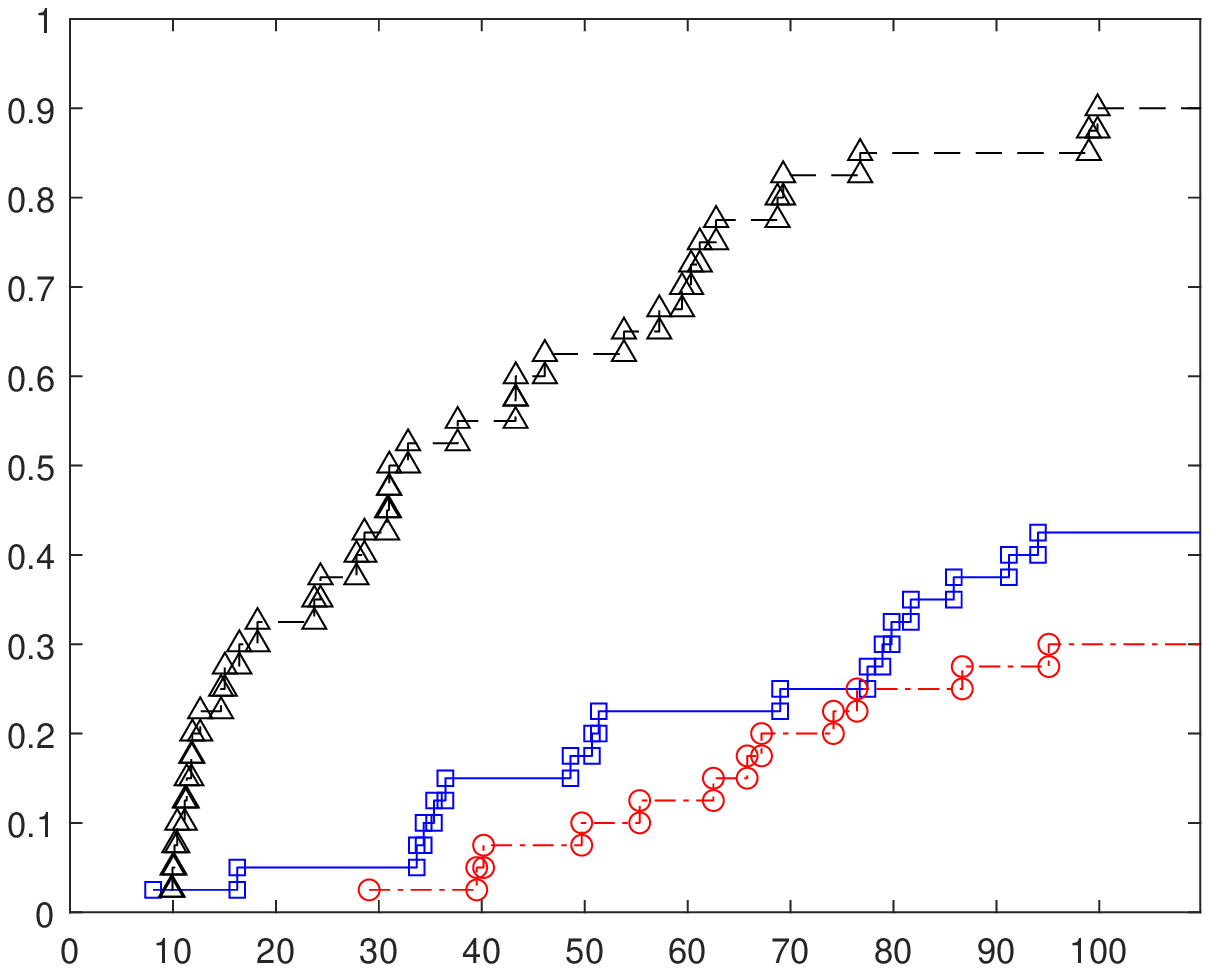}
	\caption{Data p., $\tau=10^{-3}$}
\end{subfigure}	
\begin{subfigure}[b]{0.21\textwidth}
	\includegraphics[width=\linewidth]{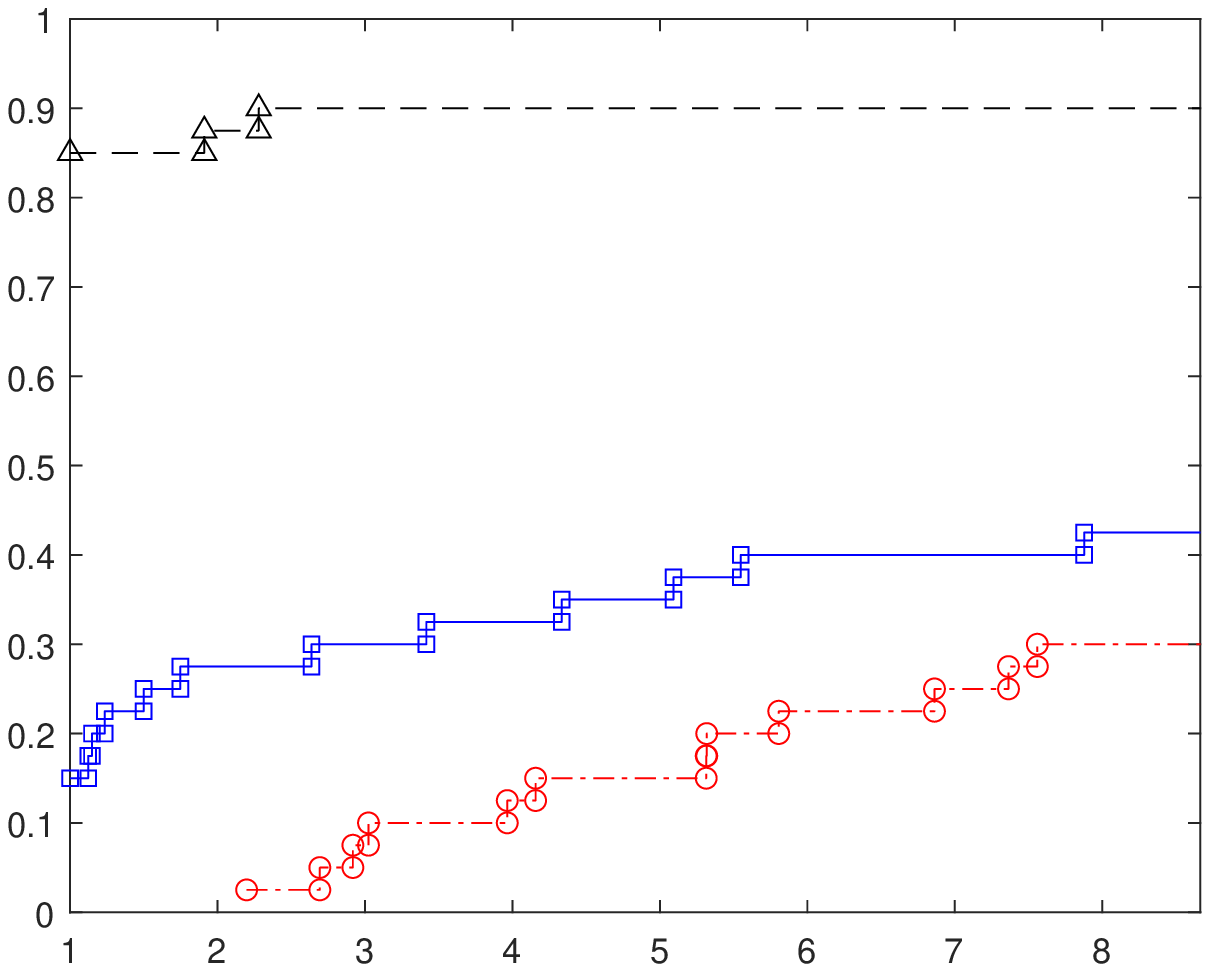}
	\caption{Perf. p., $\tau=10^{-3}$}
\end{subfigure}		
	\caption{From top to bottom: results for small, medium and large instances in the smooth case.}
\label{fig:t1s}
\end{figure}
\vspace{3mm}

	\begin{figure}[h]
	\centering
		\begin{subfigure}[b]{0.21\textwidth}
	\includegraphics[width=\linewidth]{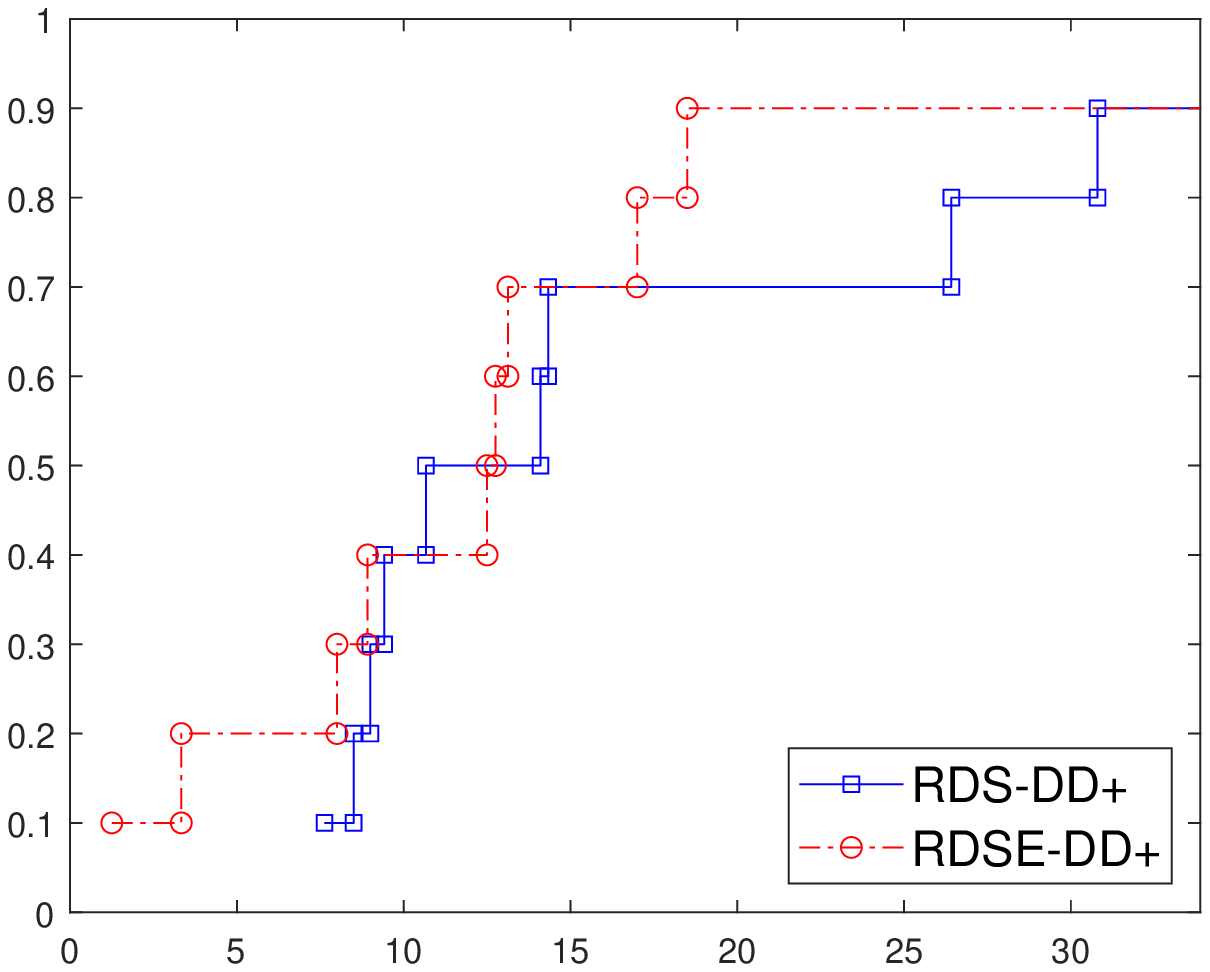}
	\caption{Data p., $\tau=10^{-1}$}
\end{subfigure}	
\begin{subfigure}[b]{0.21\textwidth}
	\includegraphics[width=\linewidth]{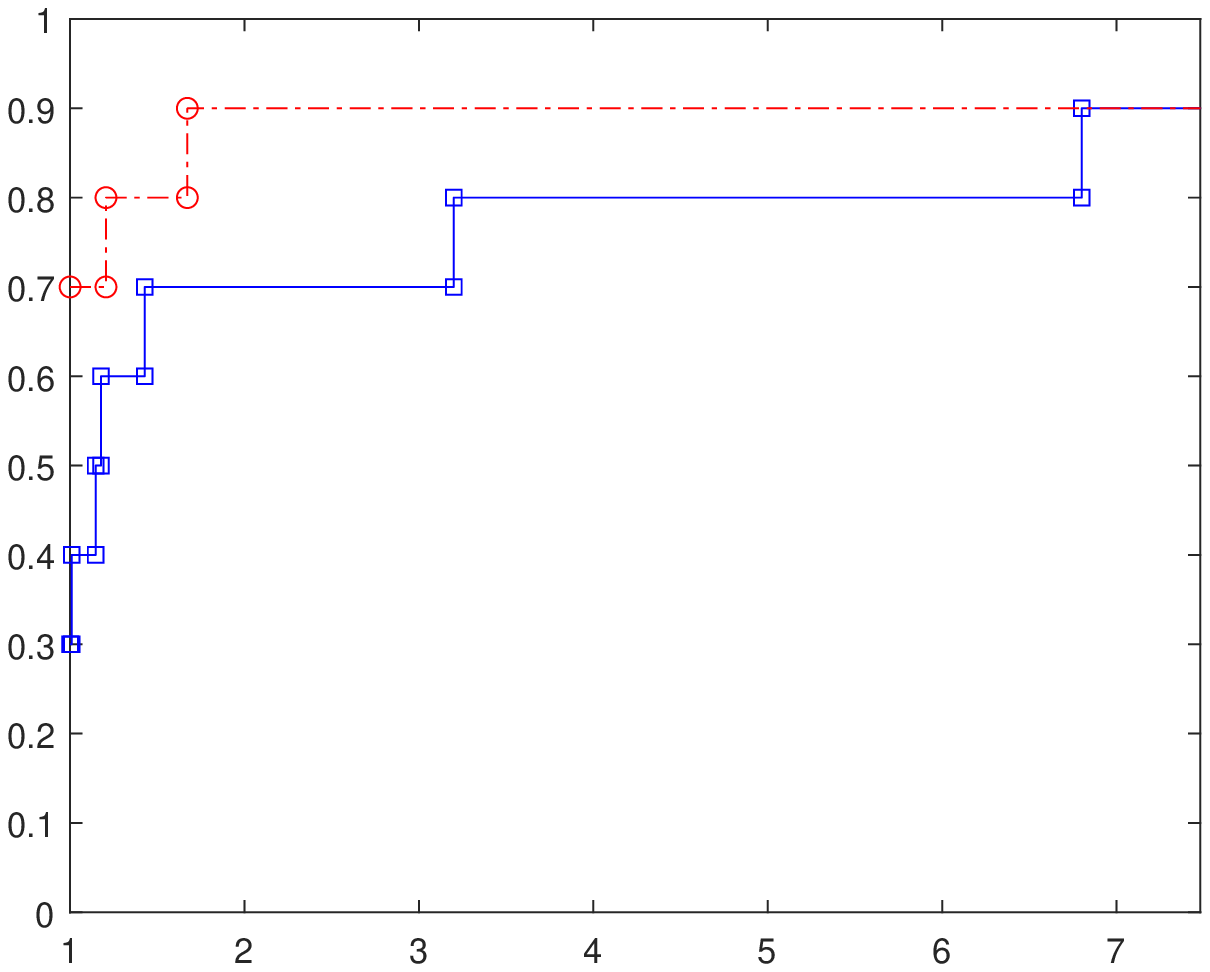}
	\caption{Perf. p., $\tau=10^{-1}$}
\end{subfigure}	
\begin{subfigure}[b]{0.21\textwidth}
	\includegraphics[width=\linewidth]{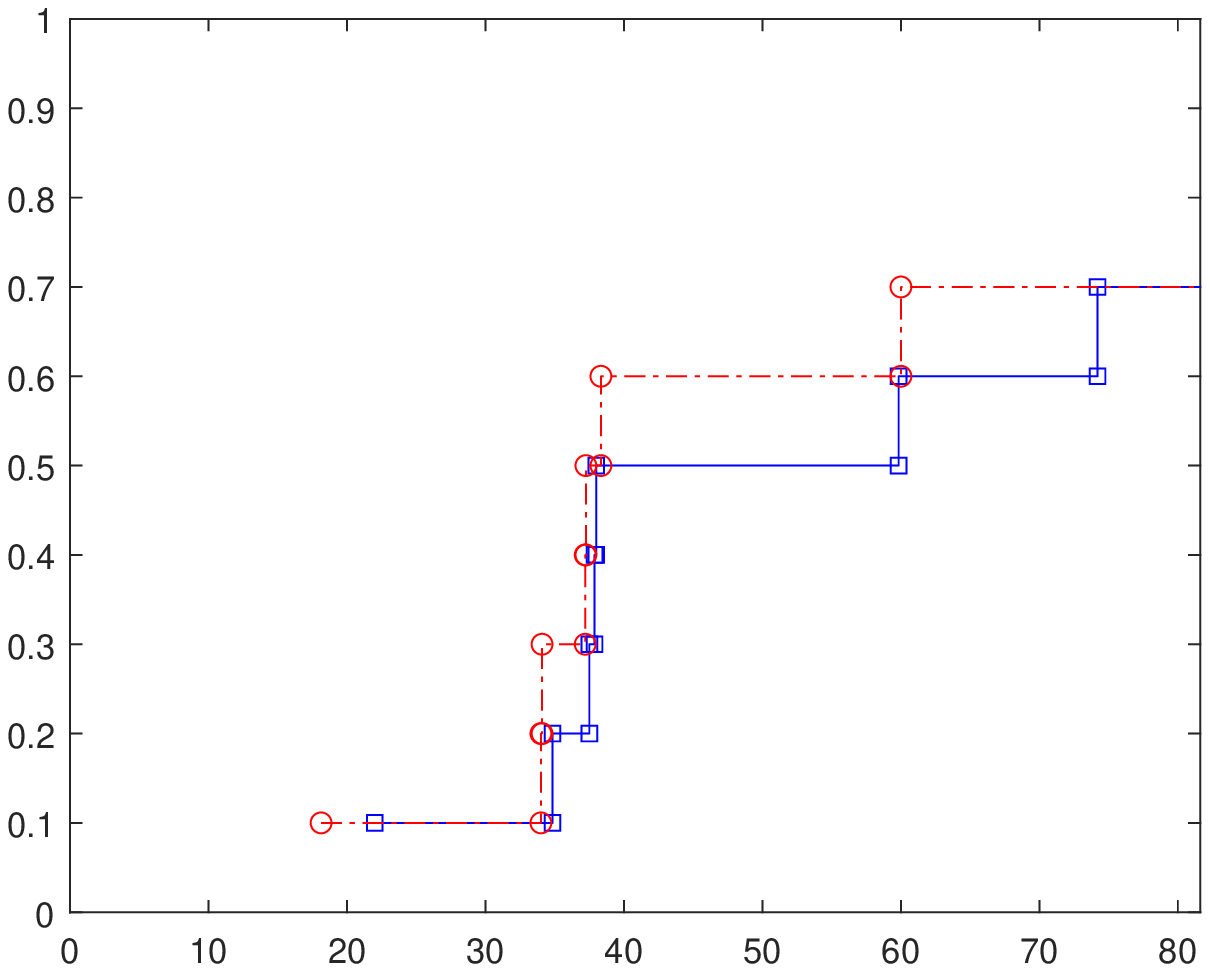}
	\caption{Data p., $\tau=10^{-3}$}
\end{subfigure}	
\begin{subfigure}[b]{0.21\textwidth}
	\includegraphics[width=\linewidth]{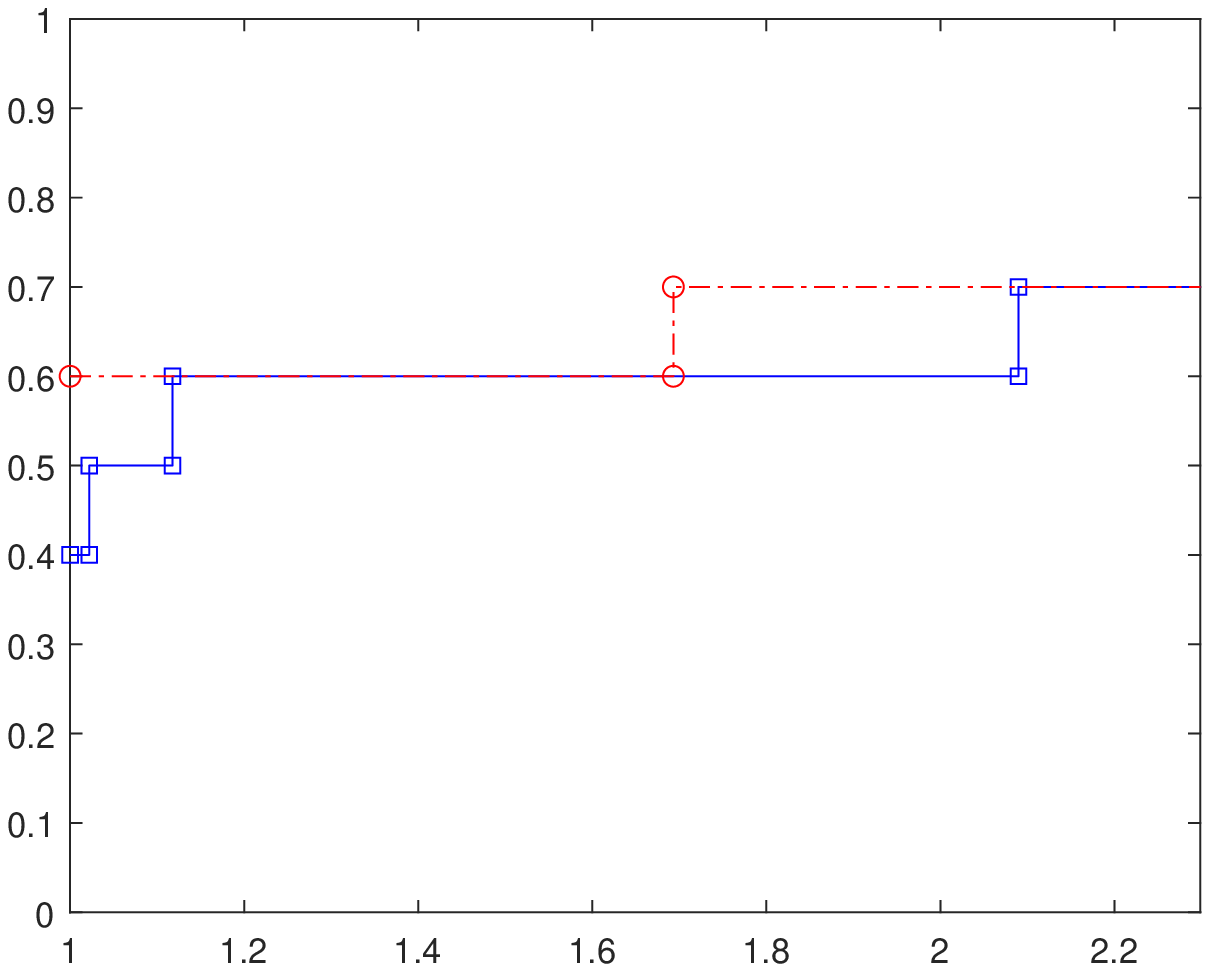}
	\caption{Perf. p., $\tau=10^{-3}$}
\end{subfigure}	
\vspace{3mm}

	\begin{subfigure}[b]{0.21\textwidth}
	\includegraphics[width=\linewidth]{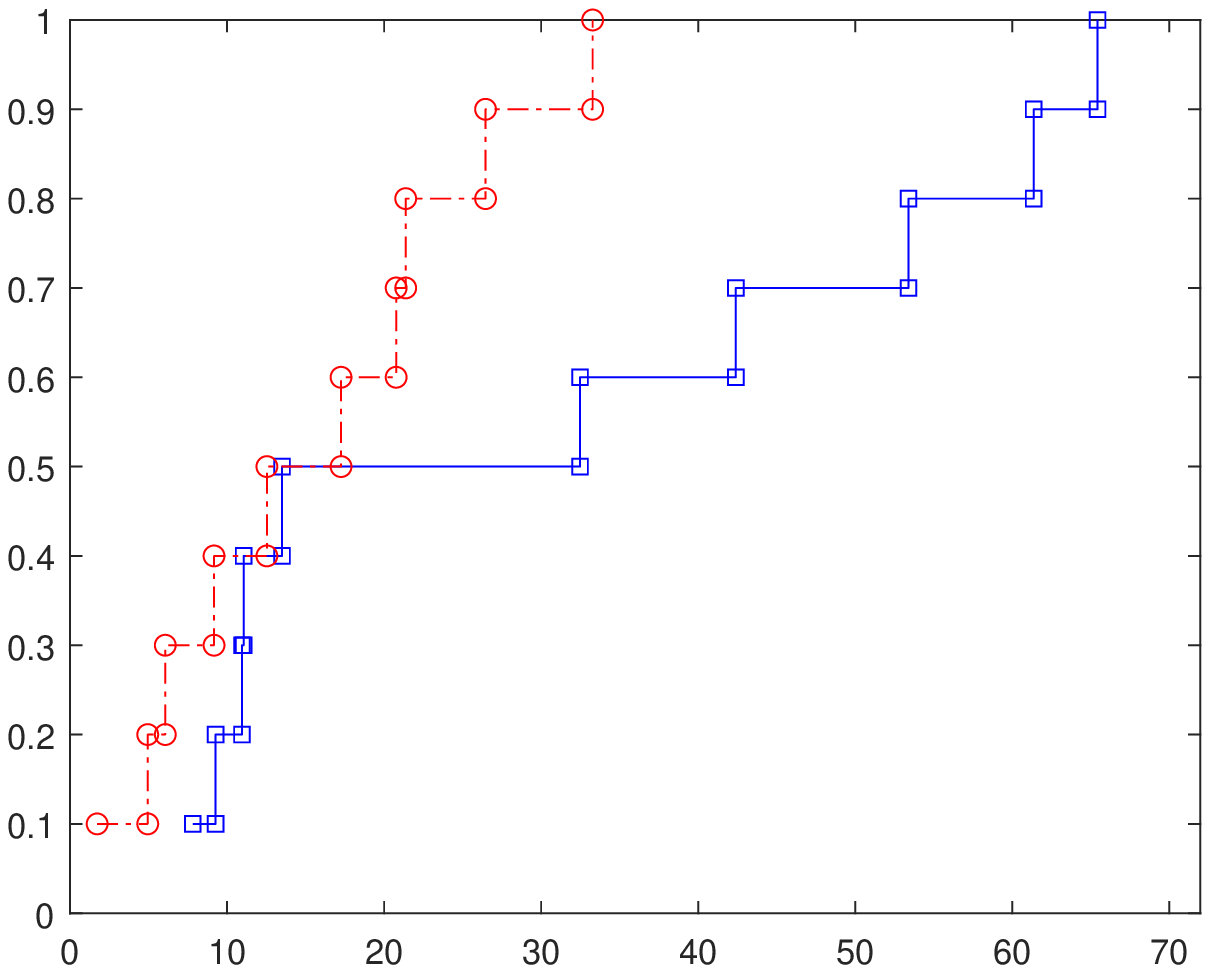}
	\caption{Data p., $\tau=10^{-1}$}
\end{subfigure}	
\begin{subfigure}[b]{0.21\textwidth}
	\includegraphics[width=\linewidth]{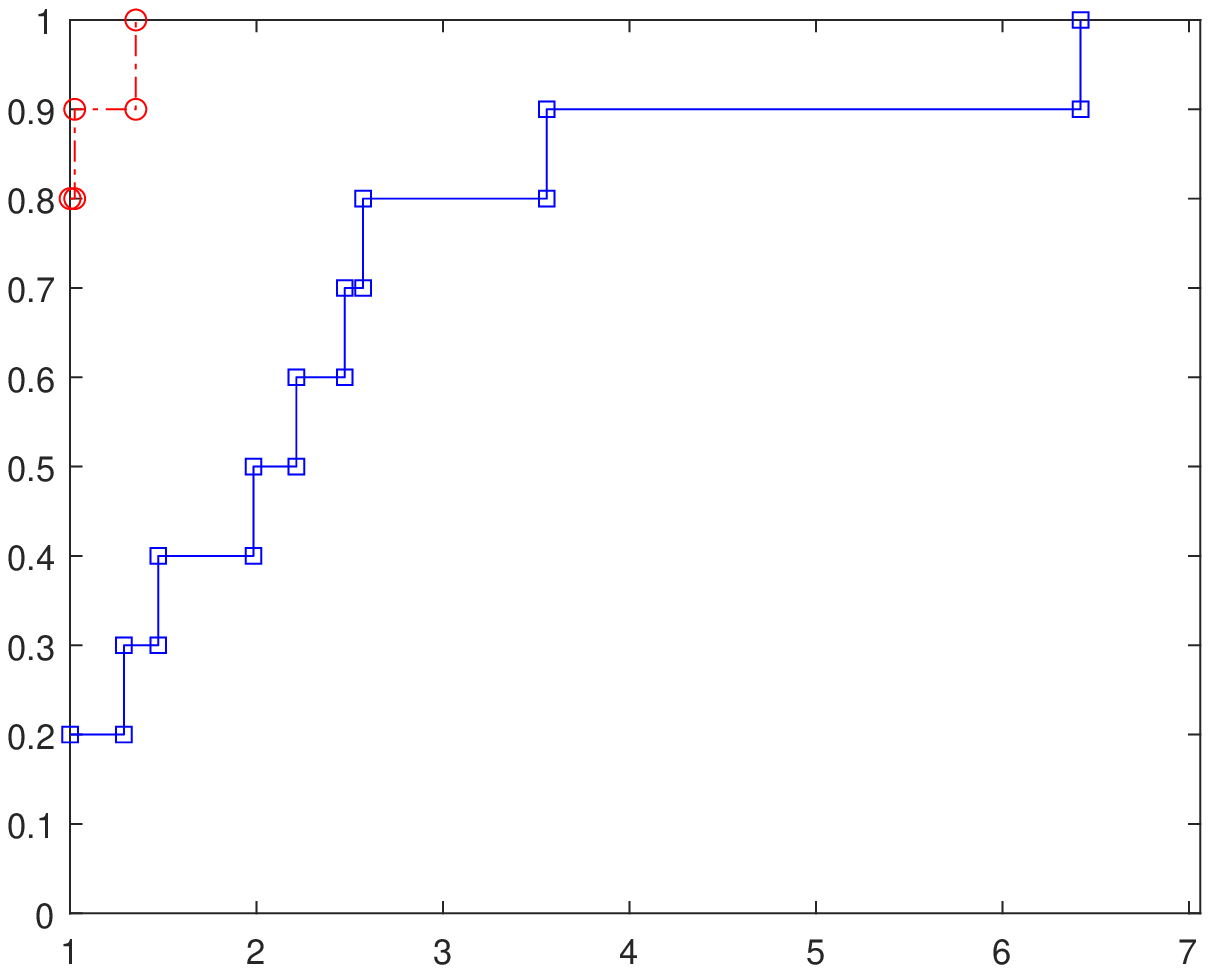}
	\caption{Perf. p., $\tau=10^{-1}$}
\end{subfigure}	
\begin{subfigure}[b]{0.21\textwidth}
	\includegraphics[width=\linewidth]{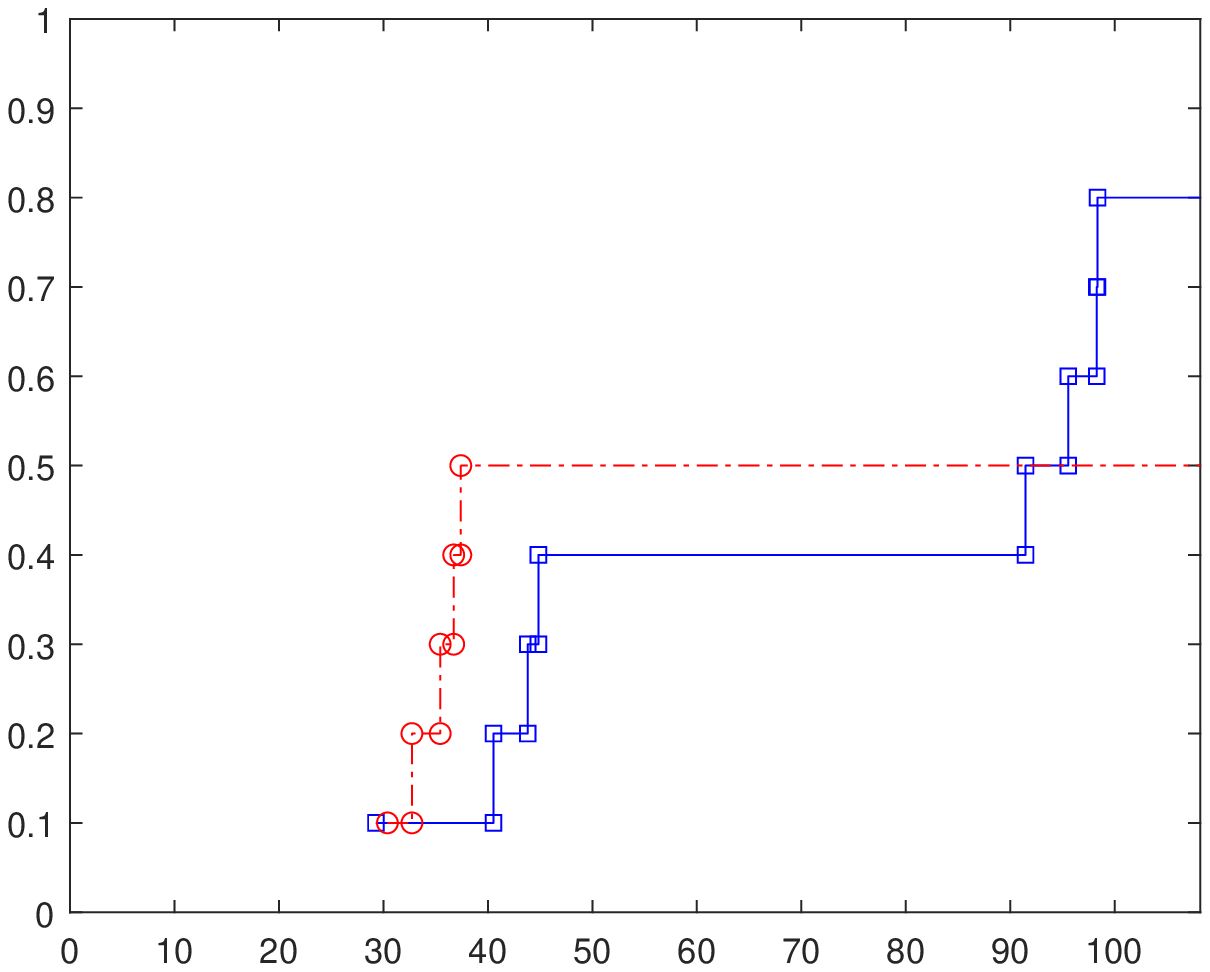}
	\caption{Data p., $\tau=10^{-3}$}
\end{subfigure}	
\begin{subfigure}[b]{0.21\textwidth}
	\includegraphics[width=\linewidth]{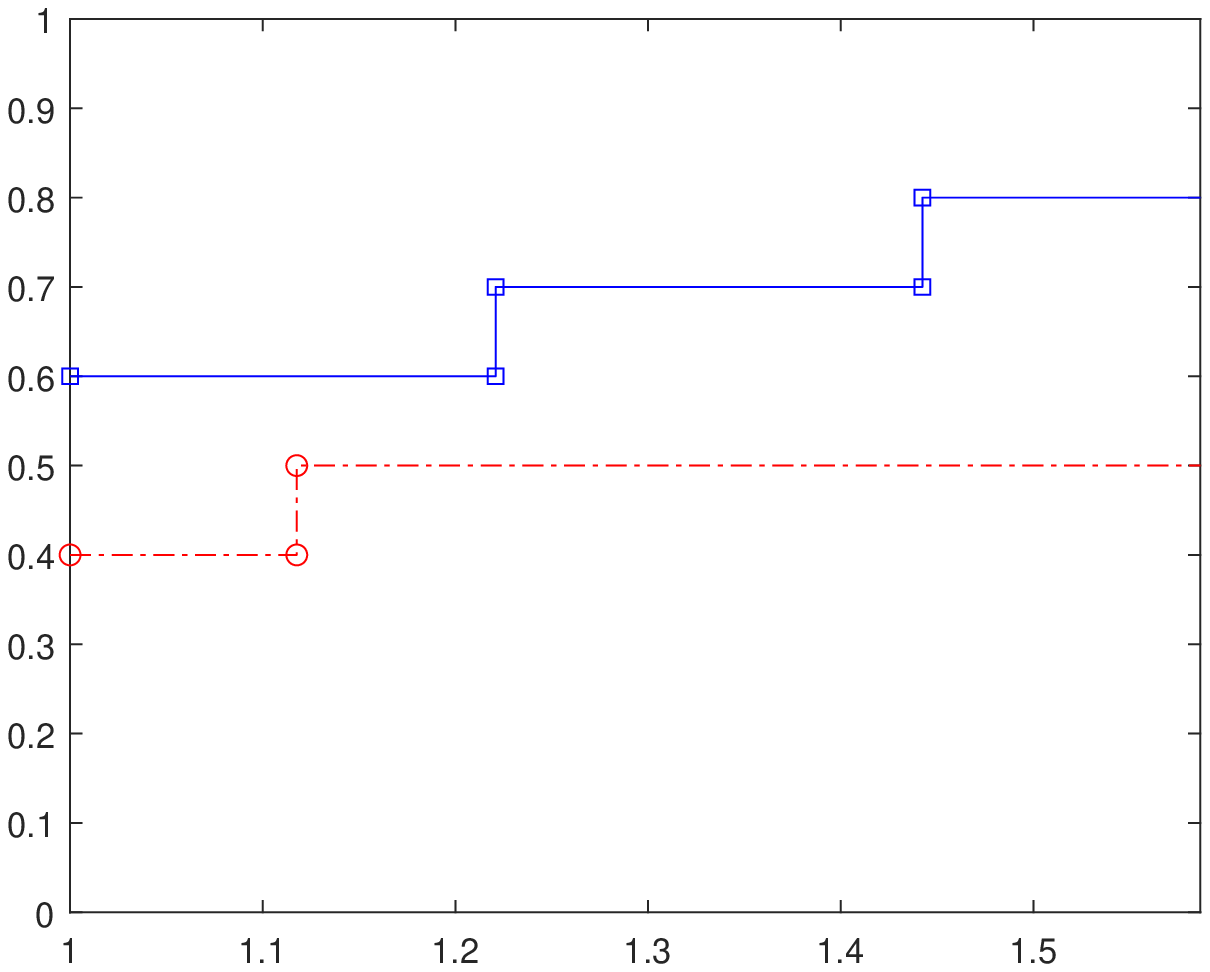}
	\caption{Perf. p., $\tau=10^{-3}$}
\end{subfigure}	
\vspace{3mm}

	\begin{subfigure}[b]{0.21\textwidth}
	\includegraphics[width=\linewidth]{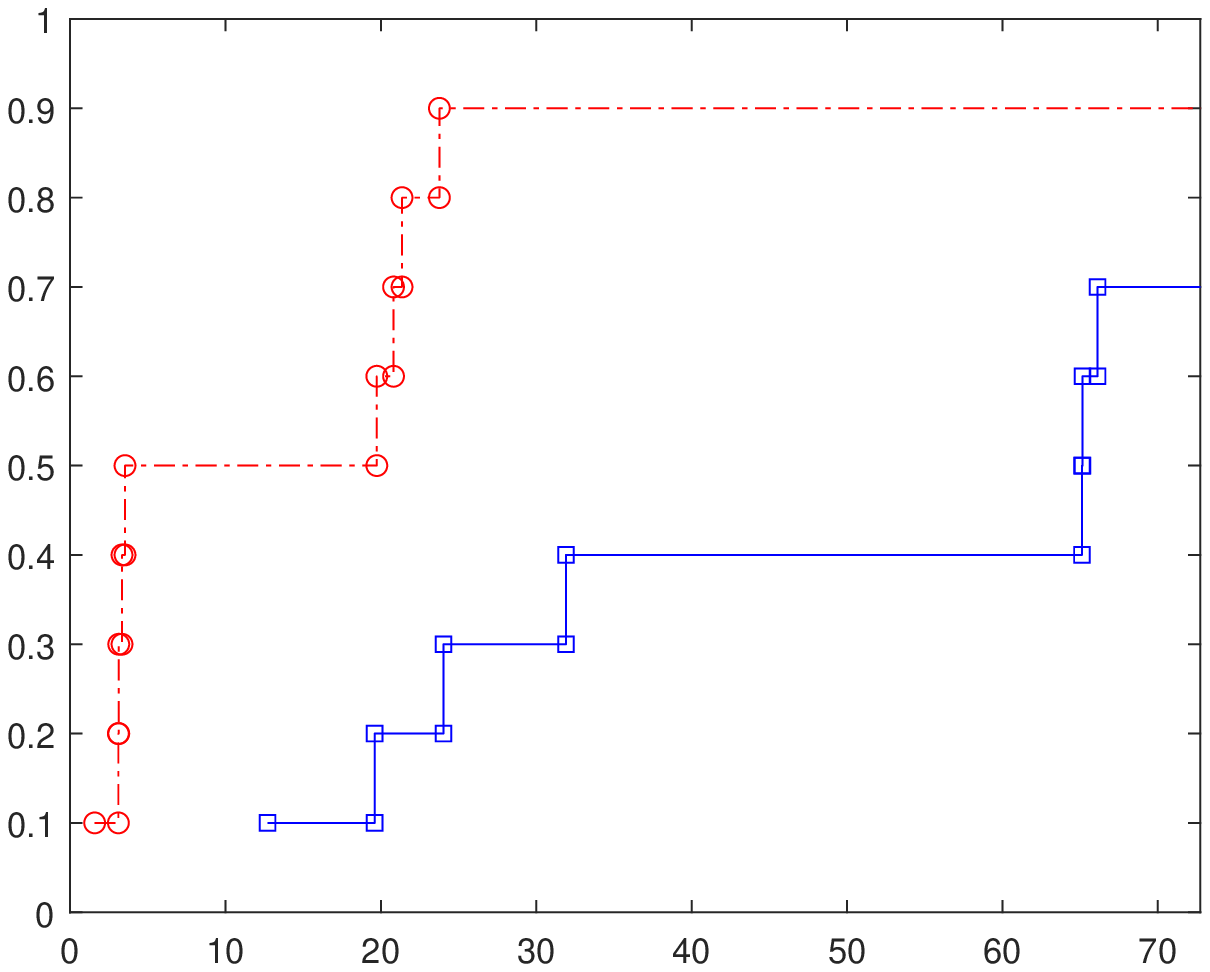}
	\caption{Data p., $\tau=10^{-1}$}
\end{subfigure}	
\begin{subfigure}[b]{0.21\textwidth}
	\includegraphics[width=\linewidth]{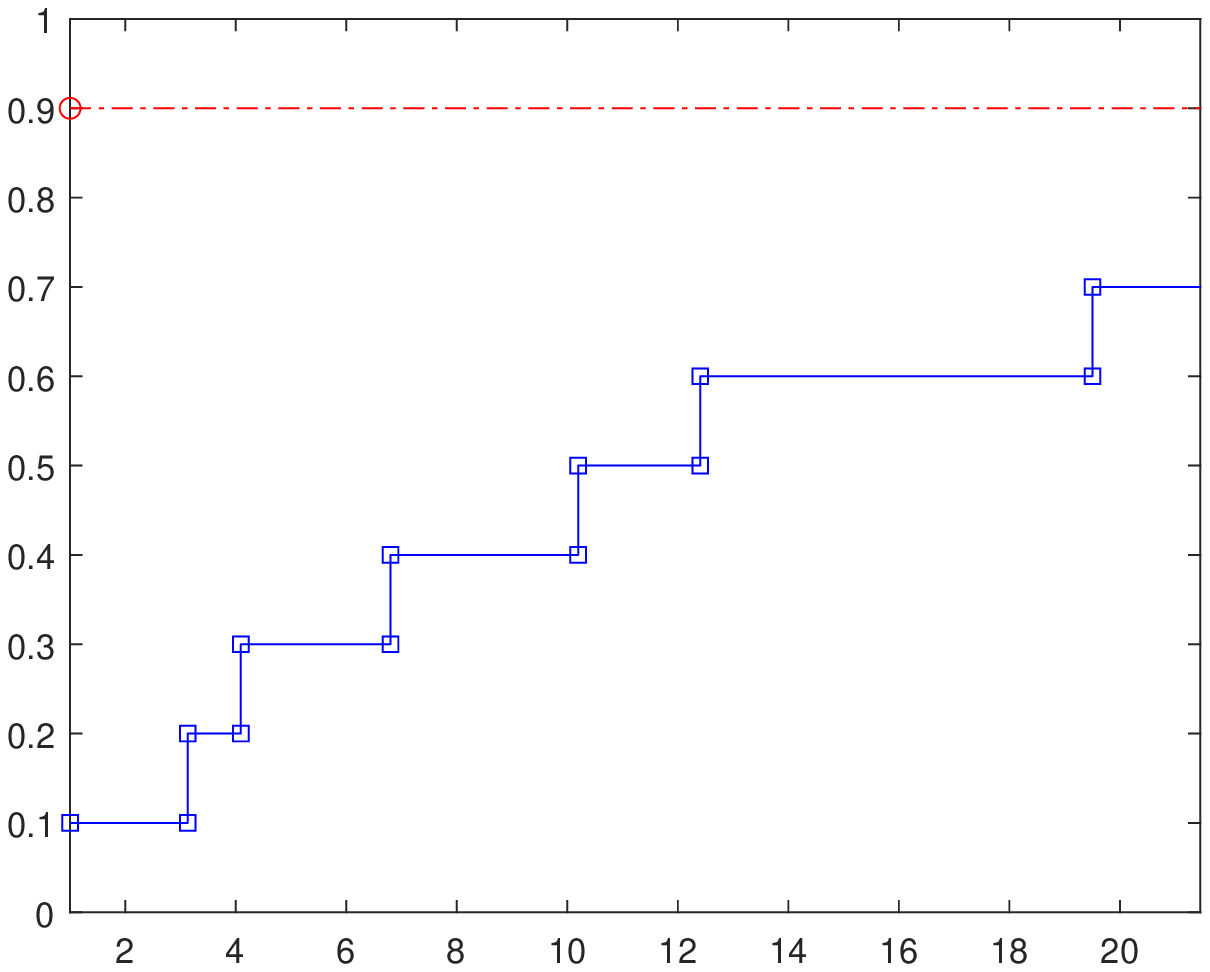}
	\caption{Perf. p., $\tau=10^{-1}$}
\end{subfigure}	
\begin{subfigure}[b]{0.21\textwidth}
	\includegraphics[width=\linewidth]{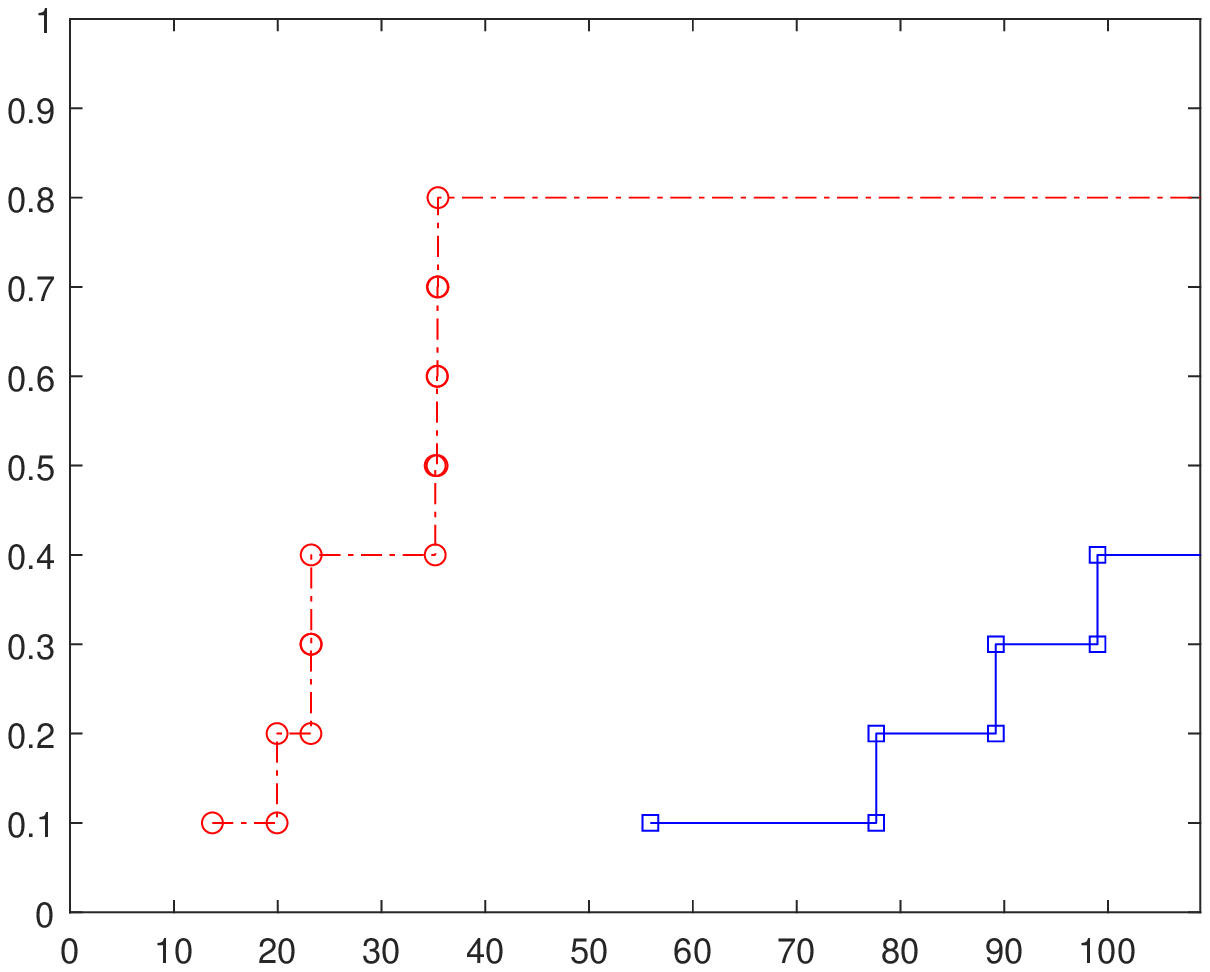}
	\caption{Data p., $\tau=10^{-3}$}
\end{subfigure}	
\begin{subfigure}[b]{0.21\textwidth}
	\includegraphics[width=\linewidth]{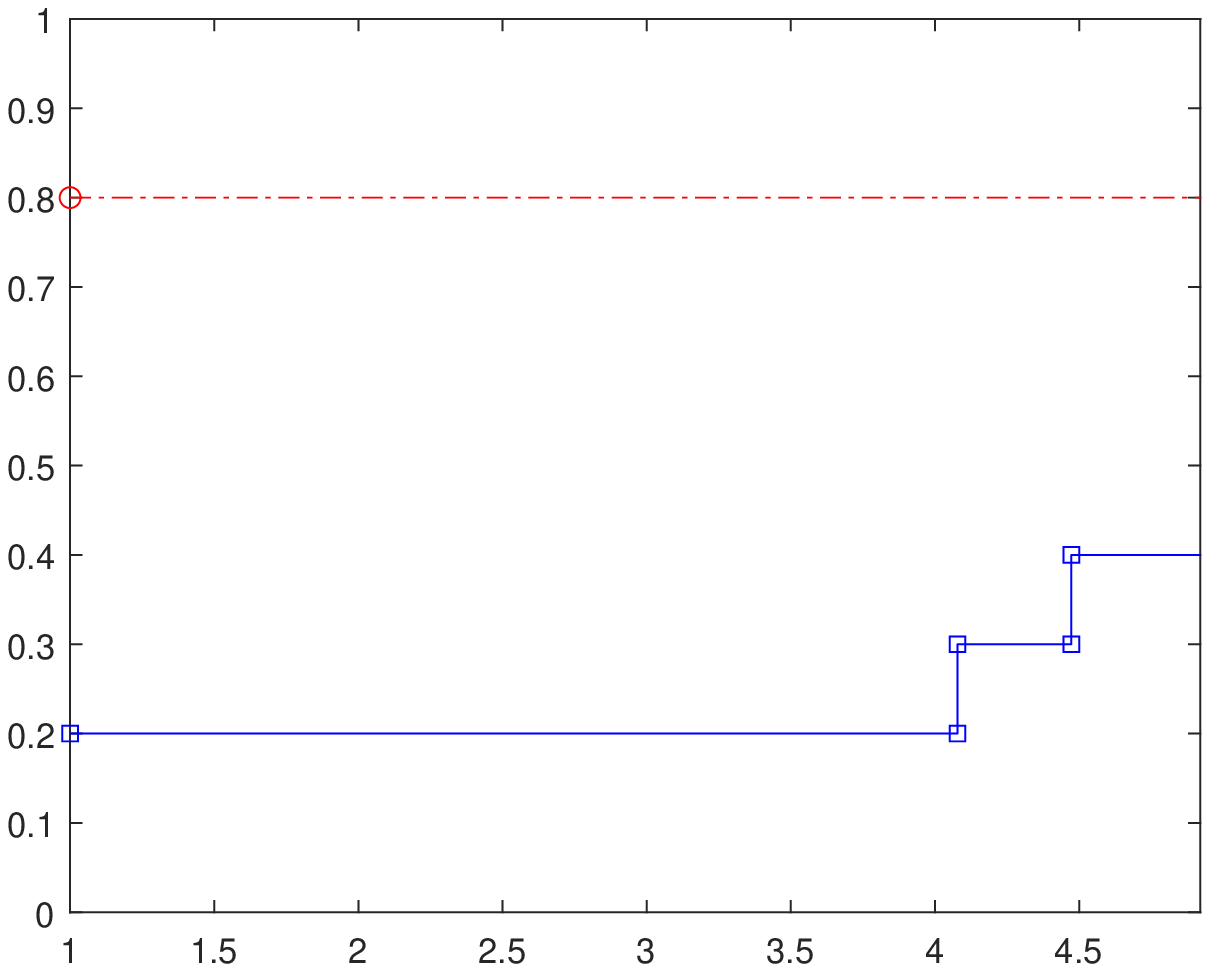}
	\caption{Perf. p., $\tau=10^{-3}$}
\end{subfigure}	
	\caption{From top to bottom: results for small, medium and large instances in the nonsmooth case.}
	\label{fig:t3s}
\end{figure}

\end{document}